\numberwithin{equation}{section}
\newcommand{\pd}[2]{\frac {\partial #1}{\partial #2}}
\newcommand{\al}{\alpha}
\newcommand{\la}{\lambda}
\newcommand{\La}{\Lambda}
\newcommand{\oo}{\omega}
\newcommand{\om}{\omega}
\newcommand{\Om}{\Omega}
\newcommand{\dd}{\delta}
\newcommand{\Na}{\nabla}
\newcommand{\ee}{\epsilon}
\newcommand{\te}{\theta}
\newcommand{\beq}{\begin{equation}}
\newcommand{\eeq}{\end{equation}}
\newcommand{\beqs}{\begin{eqnarray*}}
\newcommand{\eeqs}{\end{eqnarray*}}
\newcommand{\beqn}{\begin{eqnarray}}
\newcommand{\eeqn}{\end{eqnarray}}
\newcommand{\beqa}{\begin{array}}
\newcommand{\eeqa}{\end{array}}
\def\as{\underline{S}}
\def\eps{\epsilon}
\def\vphi{\varphi}
\def\tri{\triangle}
\def\td{\tilde}
\def\p{\partial}
\def\RR{{\mathbb R}}
\def\CC{{\mathbb C}}
\def\ri{\rightarrow}
\def\un{\underline}
\def\no{{\nonumber}}
\def\pbp{\sqrt{-1}\partial\bar\partial}
\def\tr{{\rm tr}}
\def\vol{{\rm Vol}}
\def\osc{{\rm osc\,}}
\def\cA{{\mathcal A}}
\def\cC{{\mathcal C}}
\def\cB{{\mathcal B}}
\def\cF{{\mathcal F}}
\def\cH{{\mathcal H}}
\def\cS{{\mathcal S}}
\def\i{{\sqrt{-1}}}
\def\Aut{{\rm Aut}}
\renewcommand\Re{{\rm Re}}
\renewcommand\Im{{\rm Im}}
\def\inv{{\rm inv}}
\def\pr{{\rm pr}}
\newtheorem{prop}{Proposition}[section]
\newtheorem{theo}[prop]{Theorem}
\newtheorem{thm}[prop]{Theorem}
\newtheorem{lem}[prop]{Lemma}
\newtheorem{cor}[prop]{Corollary}
\newtheorem{rem}[prop]{Remark}
\newtheorem{defi}[prop]{Definition}
\newtheorem{defn}[prop]{Definition}
\newcommand{\thmref}[1]{Theorem~\ref{#1}}
\newcommand{\lemref}[1]{Lemma~\ref{#1}}
\title{K\"ahler non-collapsing, eigenvalues and the Calabi flow}
\author{Haozhao Li$^1$  }
\address{ Department of Mathematics, University of Science and Technology
of China, Hefei, 230026, Anhui province, China and Wu Wen-Tsun Key
Laboratory of Mathematics, USTC, Chinese Academy of Sciences, Hefei
230026, Anhui,  China} \email{hzli@ustc.edu.cn}
\author{Kai Zheng}
\address{Institut f\"ur Differentialgeometrie,
Leibniz Universit\"at Hannover, Welfengarten 1, 30167 Hannover,
Germany}\email{zheng@math.uni-hannover.de}
\thanks{$^1$Research
partially supported by NSFC grant No. 11001080 and No. 11131007.}
\subjclass[2000]{Primary 53C44; Secondary 32Q15,  32Q26,  58E11}
\begin{document}
\bibliographystyle{plain}

\date{}

\maketitle

\begin{abstract}
We first prove a new compactness theorem of K\"ahler
metrics, which  confirms  a prediction
in \cite{MR2471594}. Then we establish several eigenvalue estimates along the Calabi flow. Combining the compactness theorem and these eigenvalue estimates, we generalize the method developed for the K\"ahler-Ricci flow in \cite{MR2481736} to obtain
several new small energy theorems of the Calabi flow.
\end{abstract}

%\tableofcontents

\section{Introduction}
In 1982, E. Calabi \cite{MR645743} introduced  extremal K\"ahler (extK)
metrics in a fixed cohomology class
of K\"ahler metrics. They are  critical points of the Calabi energy which is
the $L^2$-norm of the scalar curvature. The K\"ahler-Einstein metrics
and which are more general, the constant scalar curvature K\"ahler (cscK) metrics
 are both extremal K\"ahler
metrics. In the same paper, Calabi also introduced a decreasing flow of the Calabi energy,
which is now well-known as the Calabi flow. The Calabi flow is expected to be
an
effective tool to find cscK metrics in a K\"ahler class.
In this paper, we shall prove a compactness theorem  of K\"ahler metrics
 with its applications on the existence of extK/cscK metrics, and discuss the long time behavior of Calabi flow.

\subsection{K\"ahler non-collapsing}

In order to study the Calabi flow in the frame of geometric
analysis, an important step is to establish a compactness theorem
of K\"ahler metrics under suitable geometric conditions.
Comparing with the compactness properties of Riemannian metrics,
the set of K\"ahler metrics within a fixed cohomological class has
more rigidity. The elliptic
equations of  Riemannian metrics could be written down as
function equations of  K\"ahler potentials. This point of view is
different from the compactness of  Riemannian metrics.

The Cheeger-Gromov theorem \cite{MR0263092}\cite{MR682063} states
that the set of Riemannian metrics with uniform curvature, diameter
upper bound and the volume lower bound has
$C^{1,\alpha}$-compactness up
 to diffeomorphisms.
There are two points of view to treat the compactness of the set of
  K\"ahler metrics. One is to apply the compactness theorem of the
corresponding Riemannian metrics. However, the complex structure
under the diffeomorphisms might "jump" in the limit (see Tian
\cite{MR1055713}). The other   is to consider the point-wise
convergence of the K\"ahler forms and their K\"ahler potentials (see
Ruan \cite{MR1743466}). There are examples that a sequence of Riemannian
metrics converges up to diffeomorphisms, but the corresponding
K\"ahler metrics collapse in some Zariski open set in the point-wise
sense. In particular, within the same K\"ahler class this
phenomenon was called \emph{K\"ahler collapsing} in Chen \cite{MR2471594}.
The $C^{1,\alpha}$-compactness of the set of the K\"ahler metrics
was proved in Chen-He \cite{MR2405167} under uniform Ricci bound
from both sides and the uniform $L^\infty$-bound of the K\"ahler
potential,  and recently in Sz\'ekelyhidi \cite{Szekelyhidi:2012fk} under the
conditions that the Riemannian
curvature is uniformly bounded and the $K$-energy is proper
(which is called $I$-proper in our paper in order to distinguish different notions of properness. See Definition \ref{E and I} in  Section \ref{Energy functional}).

In \cite{MR2471594} Chen initiated the problem that what kinds of
geometric conditions are required to avoid the K\"ahler collapsing?
He showed that the properness of the $K$-energy in a K\"ahler class,
the K\"ahler metrics in a bounded geodesic ball in the space of
K\"ahler potentials, the uniform bound of Ricci curvature and the
uniform diameter are sufficient. Furthermore, he predicted that the
upper bound of the Ricci curvature could be dropped. In general,
when the Ricci curvature only has uniform lower bound, the sequence of
Riemannian metrics could collapse to a lower dimensional manifold
(see Cheeger-Colding
\cite{MR1484888}\cite{MR1815410}\cite{MR1815411}\cite{MR1937830}).
Together with the uniform injectivity radius lower bound and the
uniform volume upper bound, Anderson-Cheeger \cite{MR1158336} proved
the $C^{\alpha}$-compactness of the subset of  Riemmannian metrics.
We have the following K\"ahler noncollapsing theroem, which relaxes the conditions
of Chen's theorem on the two-side bounds of the Ricci curvature.

\begin{theo}\label{compactness}Let $(M, \Om)$ be
an $n$-dimensional compact K\"ahler manifold for which the $K$-energy is
$I$-proper in the K\"ahler class $\Om$. If $\cS$ is
the  set of K\"ahler metrics in $\Om$ satisfying the following
properties:
\begin{itemize}
\item the $K$-energy is bounded;
\item the $L^p$-norm of the scalar curvature is bounded for some $p>n$;
\item the Ricci curvature is bounded from below;
\item the Sobolev constant is bounded;
\end{itemize}
then $\cS$ is compact in $C^{1,\alpha}$-topology of the space of the
K\"ahler metrics for some $\alpha\in(0,1)$. In particular, K\"ahler collapsing does not
occur.
\end{theo}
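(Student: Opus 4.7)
The plan is to reduce the theorem to a uniform $C^0$-bound on the K\"ahler potentials (write $\omega_\varphi=\omega_0+\pbp\varphi$ with $\omega_0\in\Om$ a fixed reference metric) and then bootstrap to $C^{1,\alpha}$-regularity via the $L^p$-scalar curvature hypothesis. The $I$-properness of $\cK$ together with the uniform bound on $\cK$ first yields $|I(\varphi)|\le C$ for every $\omega_\varphi\in\cS$. Substituting into the Chen--Tian entropy decomposition of the $K$-energy, in which the non-entropy terms are controlled by $|I(\varphi)|$ and by the fixed geometry of $\omega_0$, produces a uniform entropy bound $\int_M\log(\omega_\varphi^n/\omega_0^n)\,\omega_\varphi^n\le C$. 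Setting $F=\log(\omega_\varphi^n/\omega_0^n)$ and using the elliptic equation
$$\Delta_{\omega_\varphi}F=\mathrm{tr}_{\omega_\varphi}\mathrm{Ric}(\omega_0)-R(\omega_\varphi),$$
together with the Sobolev constant of $\omega_\varphi$, the Ricci lower bound, the $L^p$-bound on $R(\omega_\varphi)$, and the entropy bound, one runs a Moser iteration on $e^{\alpha F}$. The expected outcome is a uniform estimate $\|e^F\|_{L^q(\omega_0^n)}\le C$ for some $q>1$.

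Once $e^F\in L^q$, $q>1$, with $\int_M e^F\omega_0^n=V$ fixed, Ko\l odziej's $L^\infty$-estimate applied to $(\omega_0+\pbp\varphi)^n=e^F\omega_0^n$ (normalizing $\sup_M\varphi=0$) produces $\|\varphi\|_{L^\infty}\le C$. With the potential uniformly bounded, a second Moser iteration on the PDE for $F$ upgrades to $\|F\|_{L^\infty}\le C$, and a De~Giorgi--Nash--Moser step exploiting $p>n$ promotes this to $F\in C^\alpha$ uniformly. Plugging back into the classical Aubin--Yau $C^2$-estimate (applied to $\log\mathrm{tr}_{\omega_0}\omega_\varphi-A\varphi$, with $A$ depending only on the bisectional curvature of the fixed $\omega_0$) delivers two-sided equivalence $c\omega_0\le\omega_\varphi\le C\omega_0$. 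A final Schauder bootstrap on the PDE for $F$ and on the complex Monge--Amp\`ere equation for $\varphi$ yields a uniform $C^{3,\alpha}$-bound on $\varphi$, hence uniform $C^{1,\alpha}$-bounds on $\omega_\varphi$; Arzel\`a--Ascoli then extracts a convergent subsequence in $C^{1,\alpha'}$ for any $\alpha'<\alpha$.

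The principal obstacle is Step~2, the density estimate: upgrading the $L\log L$-type entropy information on $e^F$ to an honest $L^q$-bound ($q>1$) \emph{before} any pointwise bound on $\varphi$ is available. All four geometric hypotheses must engage simultaneously -- the entropy supplies the initial $L^1$-datum for $F$, the Sobolev constant drives the iteration, the Ricci lower bound controls $\mathrm{tr}_{\omega_\varphi}\mathrm{Ric}(\omega_0)$ in integral form, and the $L^p$-bound on the scalar curvature handles the forcing term in the equation for $F$. This is precisely the stage at which $I$-properness passes from an abstract hypothesis on the K\"ahler class $\Om$ to an effective a priori estimate on individual metrics in $\cS$, and losing any of the four hypotheses breaks the Moser scheme.
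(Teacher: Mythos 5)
There are two genuine gaps, and together they show the proposal misses the actual mechanism of the proof. First, your ``principal obstacle'' (upgrading the entropy bound on $e^F$ to an $L^q$ bound, $q>1$, \emph{before} any pointwise control of $\varphi$) is both unproven and unnecessary. It is unproven because the forcing term $\tr_{\omega_\varphi}\mathrm{Ric}(\omega_0)$ in the equation for $F$ is of the size of $\tr_{\omega_\varphi}\omega_0=n-\Delta_\varphi\varphi$, which is a priori unbounded; your suggestion that the Ricci \emph{lower bound of $\omega_\varphi$} controls this term conflates $\mathrm{Ric}(\omega_\varphi)$ with $\mathrm{Ric}(\omega_0)$ (the latter is bounded simply because $\omega_0$ is fixed and smooth, but that only gives $|\tr_{\omega_\varphi}\mathrm{Ric}(\omega_0)|\le C\,\tr_{\omega_\varphi}\omega_0$, which does not close the iteration). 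It is unnecessary because the paper obtains the zero--order estimate $\osc(\varphi)\le I_\omega(\varphi)+C(C_S)$ \emph{first}, by Moser iteration applied directly to $n+\Delta\varphi>0$ and $n-\Delta_\varphi\varphi>0$ (Lemma \ref{lem:zero}), using only the Sobolev constants; with $\osc(\varphi)$ and the entropy bounded, the unbounded trace term is then absorbed by the decomposition $\Delta_\varphi(h-R_-\varphi)\ge -nR_--S$, $\Delta_\varphi(h+R_+\varphi)\le nR_+-S$ (Lemma \ref{est h}), and Proposition \ref{inte est} gives $\|h\|_{L^\infty}$ directly, with no $L^q$-density step and no Ko{\l}odziej-from-$L^q$ detour (Ko{\l}odziej enters only afterwards, to get $\varphi\in C^\alpha$). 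Note also that the Poincar\'e constant needed for the iteration is itself derived from the Sobolev constant and the Ricci lower bound via Carron and Li--Yau (Lemma \ref{lem:poincare}); you do not account for it.

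Second, your $C^2$ step fails as stated. The classical Aubin--Yau maximum principle applied to $\log\tr_{\omega_0}\omega_\varphi-A\varphi$ does \emph{not} have a constant depending only on the bisectional curvature of $\omega_0$: the computation produces the term $\Delta_{\omega_0}F/\tr_{\omega_0}\omega_\varphi$ and therefore requires a lower bound on $\Delta_{\omega_0}F$ (equivalently, two-sided pointwise control of $\mathrm{Ric}(\omega_\varphi)$), which is unavailable when $F$ is merely $C^\alpha$ and the scalar curvature is only in $L^p$. This is precisely the point of the theorem: the correct tool is the Chern--Lu (Schwarz lemma) inequality applied to $\log\tr_{\omega_\varphi}\omega_0-\lambda\varphi$, which uses the hypothesis $\mathrm{Ric}(\omega_\varphi)\ge-\Lambda\omega_\varphi$ together with an upper bound on the bisectional curvature of the fixed $\omega_0$ to give $\tr_{\omega_\varphi}\omega_0\le C$, i.e.\ $\omega_\varphi\ge c\,\omega_0$; the upper bound $\omega_\varphi\le C\omega_0$ then follows from this and the volume-ratio bound $e^F\le C$ (Lemma \ref{2ndestimat}). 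In your scheme the Ricci lower bound is never used where it is essential, so the two-sided metric equivalence, and hence the entire bootstrap to $C^{3,\alpha}$, does not go through.
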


In order to prove this theorem, we essentially use the equation of the scalar curvature $S$ of the K\"ahler metric $g_\vphi$, which reads
\begin{equation}\label{fflow:csck}
  \left\{
   \begin{aligned}
\frac{\det(g_\vphi)}{\det( g)}&=e^P,\\
\tri_{g_\vphi}
P&=g_{\vphi}^{i\bar{j}}R_{i\bar{j}}(\om)-S(g_\vphi).
   \end{aligned}
  \right.
\end{equation}
This is a second order elliptic system. The first equation is a complex  Monge-Amp\`ere equation with a pseudo-differential term $P$.
The main difficulty of proving this compactness theorem is how to derive the a priori estimates of the second equation which is a   complex linearized  Monge-Amp\`ere equation. Aiming to solve this problem, we first introduce a decomposition method, then apply the De Giorgi-Nash-Moser iteration with the Sobolev constant. The second condition in the theorem suggests us that there might exist analytic singularities for the critical exponent $p=n$.  A nature way to apply the compactness theorem is to find a nice path decreasing the energy, such as the minimizing sequence from the variational direct method, the continuity path from the continuity method (Aubin-Yau path) and the geometric flows including the Calabi flow, the pseudo-Calabi flow, the K\"ahler-Ricci flow,  etc. In this paper, we focus on the discussion of the Calabi flow. Actually, there are large classes of Fano surfaces, where the Sobolev constant is bounded along the Calabi flow (see the end of Section \ref{Calabi flow of the Kahler potentials}).

The $I$-properness of the $K$-energy (cf. Definition \ref{E and I})
was introduced by Tian in \cite{MR1471884} and Tian proved that
the $I$-properness of the $K$-energy in the first Chern class is equivalent
to the existence of
K\"ahler-Einstein metrics when the underlying K\"ahler manifold  has no non-zero
 holomorphic vector fields. The properness condition is a ''coercive" condition
 in the frame of variational theory. However, regarding to the existence of
 cscK/extK metrics, the accurate positive function appearing
  in the ''coercive" condition is still not known clearly. Tian
  \cite{MR1787650} conjectured that in a general K\"ahler class the $I$-properness of the
  $K$-energy is equivalent to the existence of cscK metrics.

Theorem \ref{compactness} has direct applications to the Calabi flow. Since the
$K$-energy is decreasing,
 the first condition appearing in \thmref{compactness} is satisfied automatically
 along the Calabi flow. On the other hand,
according to the functional inequality $\nu(\vphi)-\nu(0) \leq \sqrt{Ca(\vphi)} \cdot d(0,\vphi)$ of Chen \cite{MR2471594}, the uniform bound of the $K$-energy could be replaced by the uniform bounds of the Calabi energy and the geodesic distance.

\subsection{The Calabi flow on the level of K\"ahler potentials}

The Calabi flow is a fourth order flow
of K\"ahler potentials
\begin{align}\label{cf}
\frac{\partial}{\partial t}\varphi &=S(\varphi)-\underline S,
\end{align} where
$\un S$ is the average of the scalar curvature of $\oo.$
On Riemann surfaces, the long time existence and the
convergence of the Calabi flow  have been proved by Chrusical
\cite{MR1101689}  using the Bondi mass estimate. Chen
 \cite{MR1820328} gave a new geometric analysis proof which is based
 on the  compactness properties of the conformal metrics with bounded
 Calabi energy and area.  In \cite{MR1991140} Struwe  gave a
  proof by the integral method without using the maximum
  principle.

On K\"ahler manifolds of higher dimensions, since the Calabi flow is
a fourth order flow, so far for now, restricted to the difficulties
from PDE, not much progress has been made. However, due to 
the intrinsic geometric property of the Calabi flow, Chen
conjectured that the Calabi flow always has long time existence with
any smooth initial K\"ahler metrics. Furthermore the general
conjectural picture of the convergence of the Calabi flow was
outlined in Donaldson \cite{MR2103718}. Assuming the existence of
the critical metrics, the stability problem
studies the asymptotic behavior of the Calabi flow near the critical metrics,
in particular, the exponential convergence of the Calabi flow.
 The first stability theorem of Calabi flow was proved by Chen-He
\cite{MR2405167} near a cscK metric. By using a
different method, Huang and the second
author \cite{MR2953047} proved a stability theorem  when the initial K\"ahler
metric is near an extK metrics. Other stability theorems were proved when
perturbing the complex structure in Chen-Sun \cite{Chen:2010kx} and
in Tosatti-Weinkove \cite{MR2357473} when the first Chern class is
less than or equals to zero. There are also several different kinds
of notions of weak solution of the Calabi flow are constructed by
Berman \cite{Berman:2010uq} in the sense of current in the first
Chern class, approximated by the balancing flow in Fine
\cite{MR2669363} and by the De Giorgi's notion of minimizing movement
in Streets \cite{Streets:2012uq}\cite{Streets:2013fk}.

In this paper, the main problem we concerned is the convergence of the Calabi flow without assuming the existence
of cscK/extK metrics.
The first main technique result of the Calabi flow in this paper is the
following theorem, which assumes that the
initial K\"ahler potential is bounded and   the Calabi energy is sufficiently small.

\begin{thm}\label{theo:main1}
Let $(M, \oo)$ be an compact K\"ahler manifold with vanishing  Futaki
invariant. For any $\la, \La>0$, there is a
 constant $\ee=\ee(\la, \La, \om)$ such
that for any metric $\oo_{\varphi}\in [\oo]$ satisfying \beq
\oo_{\varphi}\geq \la\,\oo,\quad |\varphi|_{C^{3,\al}}\leq \La,
\quad Ca(\oo_{\varphi})\leq \ee, \label{eq0018}\eeq    the   Calabi
flow with the initial metric $\oo_{\varphi}$ exists for all time and
converges exponentially fast to a cscK   metric.

\end{thm}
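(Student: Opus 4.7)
The plan is to combine the compactness theorem \thmref{compactness} with the eigenvalue estimates along the Calabi flow advertised in the abstract, via a continuity argument in $t$, in the spirit of ``small energy implies convergence'' results in parabolic geometric analysis. Short-time existence of the flow from $C^{3,\al}$ initial data is classical for the fourth-order parabolic equation \eqref{cf}; denote the maximal existence interval by $[0,T^*)$ and set
\[
T = \sup\bigl\{\,t\in[0,T^*)\,:\,\om_{\vphi(s)}\geq \tfrac{\la}{2}\om\text{ and }|\vphi(s)|_{C^{1,\al}}\leq 2\La\text{ for all }s\in[0,t]\,\bigr\},
\]
which is positive by continuity.

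On $[0,T]$ I would verify the four hypotheses of \thmref{compactness}. Monotonicity of the $K$-energy under the Calabi flow gives $\nu(\vphi(t))\leq \nu(\vphi(0))$, bounded in terms of $\La$; monotonicity of the Calabi energy gives $Ca(\vphi(t))\leq \ee$. The definition of $T$ provides a two-sided metric equivalence with $\om$, hence a uniform lower Ricci bound and a uniform Sobolev constant. The $L^p$ bound on $S(\vphi(t))$ for some $p>n$ comes from the second equation of \eqref{fflow:csck}, elliptic regularity for $\tri_{\vphi(t)}$ under these metric bounds, and the initial $C^{3,\al}$ control on the pseudo-differential piece $P$. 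Consequently \thmref{compactness} yields $C^{1,\al}$-precompactness of $\{\vphi(t)\}_{t\in[0,T]}$.

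The second ingredient is the eigenvalue estimates established earlier in the paper: together with the $C^{1,\al}$-precompactness they provide a uniform lower bound $c_0=c_0(\la,\La,\om)>0$ on the first positive eigenvalue of the operator $\cD^*\cD$ along $[0,T]$. The vanishing Futaki invariant guarantees that $S(\vphi(t))-\un S$ is $L^2$-orthogonal to the kernel of $\cD^*\cD$, i.e.\ to the real parts of holomorphy potentials, so the eigenvalue bound applies to it directly and yields $\tfrac{d}{dt}Ca(\vphi(t))\leq -c_0\,Ca(\vphi(t))$. This produces exponential decay $Ca(\vphi(t))\leq \ee\,e^{-c_0 t}$. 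Integrating $\|\p_t\vphi\|_{L^2}=\sqrt{Ca(\vphi(t))}$ shows that $\vphi(t)$ is $L^2$-Cauchy, and combined with $C^{1,\al}$-precompactness and a bootstrap via the elliptic system \eqref{fflow:csck}, one obtains smooth, exponential convergence $\vphi(t)\to\vphi_\infty$ to a cscK potential.

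The main obstacle is closing the continuity argument, i.e.\ showing $T=T^*=\infty$. I would need to choose $\ee$ small enough (depending on $\la,\La,\om$) that the integrated $C^{1,\al}$-displacement produced by the exponential decay keeps $\om_{\vphi(t)}$ strictly above $\tfrac{\la}{2}\om$ and $|\vphi(t)|_{C^{1,\al}}$ strictly below $2\La$ on $[0,T]$. The circular dependence between the eigenvalue lower bound (which relies on $C^{1,\al}$ control through \thmref{compactness}) and the $C^{1,\al}$ control (which relies on integrating the exponential decay) is the heart of the argument; both must be verified simultaneously under a single smallness threshold on $\ee$. Once $T=\infty$ is secured, smoothness of the limit and the exponential rate follow from the decay of $Ca$ and standard parabolic bootstrap.
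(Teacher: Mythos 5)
Your overall architecture (continuity argument, eigenvalue lower bound for the Lichnerowicz-type operator, exponential decay of the Calabi energy) matches the paper's, but there are two genuine gaps in how you propose to obtain the eigenvalue bound. First, you verify the hypotheses of \thmref{compactness} and invoke it to get $C^{1,\al}$-precompactness; but \thmref{compactness} requires the $K$-energy to be $I$-proper in the class $\Om$, and \thmref{theo:main1} makes no such assumption (only the vanishing of the Futaki invariant). So that route is simply unavailable here; the paper never uses \thmref{compactness} in the proof of \thmref{theo:main1} (it is used only later, for \thmref{theo:main1a} and \corref{cor:main}, where properness \emph{is} assumed). Second, even granting some precompactness, $C^{1,\al}$ control of the potentials is not enough to run the eigenvalue estimate: the relevant lemma (Lemma \ref{lem:key1}) is proved by a compactness--contradiction argument that needs $\varphi_m\to\varphi_\infty$ in $C^{4,\al'}$, because the operator $L$ involves the Ricci curvature and four derivatives of $f$, and one must pass to the limit in $\int_M|\Na\Na f_m|^2\,\oo_{\varphi_m}^n$ with controlled coefficients. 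Your continuity set only constrains $|\varphi|_{C^{1,\al}}$, so the lower bound $c_0$ you posit does not follow.

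The paper closes both gaps with the parabolic smoothing of Proposition \ref{short time}: starting from $C^{3,\al}$ data with $\oo_\varphi\geq\la\oo$, after a definite short time $t_0$ the potential is bounded in $C^{4,\al}$ (indeed $C^{k,\al}$ for all $k$) by constants depending only on $\la,\La,\oo$. The continuity argument is then run on the set $\cA(6c,6B,\ee)$ defined by the $C^{4,\al}$ norm and the two-sided metric equivalence, where Lemma \ref{lem:calabi1} applies directly and gives $\frac{d}{dt}Ca\leq-\mu\,Ca$. A further point you underestimate is how the open condition is preserved: integrating $\|\p_t\varphi\|_{L^2}=\sqrt{Ca}$ only gives $L^2$-Cauchyness, which cannot by itself keep $|\varphi|_{C^{4,\al}}$ below $6B$ or $\oo_t$ above $\frac{1}{6c}\oo$. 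The paper instead derives \emph{pointwise} exponential decay of $|\Na^iS|$ for the needed range of $i$ (Lemma \ref{lem:decay1}, via Hamilton's interpolation inequalities and the Chen--He integral curvature estimates), and integrates $|\p_t\oo_t|\leq|\Na\bar\Na S|$ and $\p_t\log(1+|\Na^i\varphi|)\leq c(n)(|\Na\bar\Na S|+|\Na^iS|)$ in time; choosing $\ee$ small makes the total displacement small, which is what actually breaks the circularity you identify at the end of your proposal.
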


An analogous result to Theorem \ref{theo:main1} also holds for extK
metrics (cf. Theorem \ref{theo:main1ext}).
We accumulate the discussion above and obtain the corollary of \thmref{compactness} as
the following:
\begin{cor}\label{cor:main}
Let $(M, \om)$ be an $n$-dimensional compact K\"ahler manifold for
which the   $K$-energy is $I$-proper in the K\"ahler class
$\Om.$  If the $L^p$-norm of the  scalar curvature for
some $p>n$, the Sobolev constant and the lower bound of Ricci
curvature are uniformly bounded along the flow, then the
Calabi flow converges exponentially fast to a cscK
metric.

\end{cor}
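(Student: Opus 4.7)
The plan is to combine the monotonicity of the $K$-energy along the Calabi flow with \thmref{compactness} to extract a cscK subsequential limit, then invoke the small-energy stability result \thmref{theo:main1} to promote subsequential convergence into exponential convergence of the entire flow.

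First I verify that $\{\oo_{\vphi_t}\}_{t\geq 0}$ satisfies all four hypotheses of \thmref{compactness}. The $L^p$-bound on $S$, the Ricci lower bound, and the Sobolev constant bound are assumed. For the $K$-energy bound, along the Calabi flow $\frac{d}{dt}\nu(\vphi_t)=-Ca(\vphi_t)\leq 0$, so $\nu(\vphi_t)\leq \nu(\vphi_0)$; meanwhile $I$-properness of $\nu$ in $\Om$ means $\nu\geq f\circ I$ for some $f:[0,\infty)\to\RR$ tending to $+\infty$, which is bounded below and hence produces a uniform lower bound for $\nu(\vphi_t)$. Consequently $\nu$ is two-sided bounded along the flow and $\int_0^\infty Ca(\vphi_t)\,dt=\nu(\vphi_0)-\lim_{t\to\infty}\nu(\vphi_t)<\infty$, so there is a sequence $t_k\to\infty$ with $Ca(\vphi_{t_k})\to 0$. \thmref{compactness} then yields $C^{1,\alpha}$-precompactness of $\{\oo_{\vphi_t}\}$, equivalently $C^{3,\alpha}$-precompactness of the normalized K\"ahler potentials.

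Passing to a subsequence, $\vphi_{t_k}\to \vphi_\infty$ in $C^{3,\alpha'}$ for some $0<\alpha'<\alpha$, and $S(\vphi_{t_k})\to S(\vphi_\infty)$ weakly in $L^p$ (the uniform $L^p$-bound gives weak compactness, and the $C^{3,\alpha'}$-convergence of potentials forces the weak limit to equal $S(\vphi_\infty)$ distributionally). Lower semicontinuity of the $L^2$-norm with respect to this weak convergence, together with $Ca(\vphi_{t_k})\to 0$, yields $Ca(\vphi_\infty)=0$, so $\omega_\infty:=\oo_{\vphi_\infty}$ is cscK. In particular the Futaki invariant of $\Om$ vanishes, so \thmref{theo:main1} applies with $\omega_\infty$ as reference metric. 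For $k$ large enough, uniform constants $\la,\La>0$ can be chosen so that $\oo_{\vphi_{t_k}}\geq \la\,\omega_\infty$ and $|\vphi_{t_k}-\vphi_\infty|_{C^{3,\alpha}}\leq\La$; taking $k$ still larger so that also $Ca(\vphi_{t_k})<\ee(\la,\La,\omega_\infty)$, \thmref{theo:main1} provides global existence and exponential convergence to a cscK metric of the Calabi flow starting at $\vphi_{t_k}$, which by uniqueness of the flow coincides with our original flow from time $t_k$ onwards. Hence the entire flow converges exponentially fast to a cscK metric.

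The step I expect to be the main obstacle is identifying the subsequential limit $\omega_\infty$ as cscK from the output of \thmref{compactness}: the compactness theorem delivers only $C^{1,\alpha}$-control of the metric, which is not directly sufficient to pass $Ca(\vphi_{t_k})\to 0$ to the limit, since the scalar curvature involves two derivatives of $g_\vphi$. The hypothesis $p>n$ is crucial here, as it supplies the weak $L^p$-compactness of $S(\vphi_{t_k})$ needed to identify the weak limit using the elliptic decomposition (\ref{fflow:csck}). Once the cscK limit is secured, matching the hypotheses of \thmref{theo:main1} at time $t_k$ is a routine consequence of $C^{3,\alpha}$-convergence of the potentials.
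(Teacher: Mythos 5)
Your proposal follows the same overall strategy as the paper: feed the evolving metrics into Theorem \ref{compactness} using the monotonicity of the $K$-energy, locate a time with small Calabi energy and controlled potential, and hand off to Theorem \ref{theo:main1}. Two remarks.

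First, a genuine (though easily repaired) gap: you work with $\{\oo_{\vphi_t}\}_{t\ge 0}$ and $\int_0^\infty Ca(\vphi_t)\,dt$ from the outset, i.e.\ you assume long-time existence, which is part of what must be proved. If the maximal existence time $T$ were finite, the finiteness of $\int_0^T Ca\,dt$ would yield no sequence of times with $Ca\to 0$. The paper closes this by observing that Theorem \ref{compactness} gives uniform $C^{3,\alpha}$ bounds on $\vphi(t)$ and a uniform lower bound $\oo_t\ge\la\,\oo$ on the whole existence interval, whence Proposition \ref{short time} (whose existence time and estimates depend only on these quantities) extends the solution past any finite $T$. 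You should insert this step before integrating the Calabi energy.

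Second, your identification of the subsequential limit as cscK via weak $L^p$ convergence of the scalar curvature is a genuinely different, and somewhat more economical, route than the paper's: the paper instead uses the smoothing estimates of Proposition \ref{short time} to upgrade the $C^{3,\alpha}$ bounds to uniform $C^{k,\alpha}$ bounds for every $k$ and $t\ge t_0$, extracts a smoothly convergent subsequence with $\int_M|\Na\Na S|^2\,\oo_\infty^n=0$ (using $\frac{d}{dt}Ca=-\int_M|\Na\Na S|^2$), concludes the limit is extremal and hence cscK by properness, and then gets $Ca(t)\to 0$ from the monotonicity of $Ca$. Your weak-convergence argument avoids the smoothing step for this purpose, though you still need Proposition \ref{short time} for long-time existence as above. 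Two simplifications are also available to you: the vanishing of the Futaki invariant follows directly from $I$-properness (the $K$-energy is bounded below), so the cscK limit is not needed for that; and Theorem \ref{theo:main1} can be applied with the original smooth $\oo$ as reference metric, since the compactness theorem already provides $|\vphi_{t_k}|_{C^{3,\alpha}}\le\La$ and $\oo_{\vphi_{t_k}}\ge\la\,\oo$, which avoids taking the merely $C^{1,\alpha}$ limit $\oo_\infty$ as a background metric.
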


Combining Theorem \ref{compactness} with Theorem \ref{theo:main1},
we also have the following result using the $K$ energy:

\begin{theo}\label{theo:main1a}
Let $(M, \oo)$ be a $n$-dimensional compact K\"ahler manifold for which the
$K$-energy is $I$-proper.
 For any constants $\la,  \La, K>0$ and $p>n$, there is a constant $\ee=\ee(\la, \La,
 K, p,
 \om)$ such
that if there exists a metric $\oo_{0}\in [\oo]$ satisfying the
following conditions
\begin{align*}  Ric(\oo_0)\geq -\la \,\oo_0,\quad
\|S(\om_0)\|_{L^p(\oo_0)}\leq \La, \quad C_S(\om_0)\leq K,  \quad
\nu_{\oo}(\oo_0)\leq \inf_{\oo'\in \Om}\nu_{\oo}(\oo')+\ee,
\end{align*}
then the Calabi flow with the initial metric $\oo_0$ exists for all
time and converges exponentially fast to a cscK metric.
 \end{theo}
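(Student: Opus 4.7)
I would argue by contradiction, using \thmref{compactness} to extract a cscK metric as the limit of a near-minimizing sequence, and then invoking \thmref{theo:main1} on the tail of the sequence to obtain exponential convergence.

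Suppose the conclusion fails. Then there exists a sequence $\omega_n = \omega + \pbp \varphi_n \in \Omega$ satisfying
\[
Ric(\omega_n) \geq -\la\,\omega_n, \quad \|S(\omega_n)\|_{L^p(\omega_n)} \leq \La, \quad C_S(\omega_n) \leq K, \quad \nu_\omega(\omega_n) \leq \inf_{\omega' \in \Omega} \nu_\omega(\omega') + \tfrac{1}{n},
\]
for which the Calabi flow with initial metric $\omega_n$ does not converge exponentially to a cscK metric. The $K$-energy along this sequence is uniformly bounded, so the hypotheses of \thmref{compactness} are satisfied, and after passing to a subsequence $\omega_n \to \omega_\infty$ in $C^{1,\alpha}$ for some K\"ahler metric $\omega_\infty \in \Omega$. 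By lower semi-continuity of $\nu_\omega$ under $C^{1,\alpha}$ convergence, $\omega_\infty$ minimizes $\nu_\omega$; elliptic regularity applied to its Euler-Lagrange equation $S(\omega_\infty) = \underline S$ (cf.\ \eqref{fflow:csck}) then identifies $\omega_\infty$ as a smooth cscK metric. In particular the Futaki invariant of $[\omega]$ vanishes and \thmref{theo:main1} is applicable.

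Next I would upgrade $\omega_n \to \omega_\infty$ from $C^{1,\alpha}$ to $C^{3,\alpha'}$ for some $\alpha'\in (0,\alpha)$. Since $p > n$ and the Sobolev constant is uniformly controlled, Moser iteration on \eqref{fflow:csck} upgrades the potential $P$ (and hence $S(\omega_n)$) to $C^{0,\alpha'}$; interior Schauder applied to the complex Monge-Amp\`ere equation then gives a uniform $C^{2,\alpha'}$ bound on $\varphi_n$, and a further Schauder estimate on the second (linearized Monge-Amp\`ere) equation of \eqref{fflow:csck} produces a uniform $C^{3,\alpha'}$ bound. Extracting a further subsequence, $\varphi_n \to \varphi_\infty$ in $C^{3,\alpha'}$, so that for all $n$ large we have $\omega_n \geq \tfrac{1}{2}\lambda_\infty\, \omega$ (where $\omega_\infty \geq \lambda_\infty\, \omega$), $|\varphi_n|_{C^{3,\alpha'}} \leq \Lambda'$, and $Ca(\omega_n) \to Ca(\omega_\infty) = 0$.

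Choosing $\epsilon'$ according to \thmref{theo:main1} for the parameters $(\tfrac12 \lambda_\infty, \Lambda')$, I conclude that the Calabi flow from $\omega_n$ converges exponentially to a cscK metric for all sufficiently large $n$, contradicting the choice of $\omega_n$. The main obstacles I expect are (i) the regularity of the $C^{1,\alpha}$ limit $\omega_\infty$ as a smooth cscK metric, where one must handle the coupling in the system \eqref{fflow:csck} and the pseudo-differential nature of $P$, and (ii) the $C^{1,\alpha}\to C^{3,\alpha'}$ bootstrap, which requires iterating Schauder-type estimates on the same coupled elliptic system that drives the proof of \thmref{compactness}.
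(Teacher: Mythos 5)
Your overall strategy (compactness $\Rightarrow$ a nearly-minimizing metric with controlled potential $\Rightarrow$ small Calabi energy $\Rightarrow$ Theorem~\ref{theo:main1}) is the right one, but there is a genuine gap at the step where you conclude $Ca(\oo_n)\to Ca(\oo_\infty)=0$ from $C^{3,\alpha'}$ convergence of the potentials. The Calabi energy is the squared $L^2$-norm of $S(\oo_{\varphi})-\un S$, and the scalar curvature involves \emph{four} derivatives of the potential; a uniform $C^{3,\alpha}$ bound (which is all that Theorem~\ref{compactness} and your bootstrap provide) gives at best weak convergence of $S(\oo_n)$ (say weakly in $L^p$, using the hypothesis $\|S\|_{L^p}\le \La$), and weak convergence only yields $\liminf_n Ca(\oo_n)\ge Ca(\oo_\infty)$ --- the wrong inequality for your purposes. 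Relatedly, identifying the $C^{1,\alpha}$-limit metric as a smooth cscK metric via ``lower semi-continuity plus elliptic regularity of the Euler--Lagrange equation'' is asserted rather than proved: a minimizer of the $K$-energy whose potential is only $C^{3,\alpha}$ does not a priori have a classically defined scalar curvature, so writing down $S(\oo_\infty)=\un S$ and bootstrapping is itself a nontrivial regularity problem for a fourth-order equation, not a routine Schauder argument.

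The paper closes exactly this gap with one extra ingredient your proposal is missing: the short-time smoothing property of the Calabi flow (Proposition~\ref{short time}). Starting from the $C^{3,\alpha}$ potential produced by Theorem~\ref{compactness}, one runs the flow to a fixed small time $t_0$, at which the potential is bounded in $C^{4,\alpha}$ while the $K$-energy is still within $\ee$ of its infimum (it decreases along the flow). With a uniform $C^{4,\alpha}$ bound the compactness/contradiction argument you propose does work: a subsequence converges in $C^{4,\alpha'}$, the scalar curvatures converge uniformly, the limit is a genuine $C^{4,\alpha'}$ minimizer and hence cscK with zero Calabi energy, and the Calabi energies along the sequence converge to $0$. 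Theorem~\ref{theo:main1} is then applied to $\oo_{\varphi(t_0)}$ rather than to $\oo_0$, which suffices since the two metrics lie on the same flow line. You should insert this smoothing step; without it the passage from $C^{3,\alpha}$ control to small Calabi energy fails.
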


We sketch the proof of Theorem \ref{theo:main1a}. By Theorem \ref{compactness} we obtain a
$C^{3,\alpha}$ K\"ahler potential with small $K$-energy.
Then a contradiction argument implies that the resulting smooth
metric from the Calabi flow has small Calabi energy. This together
with  \thmref{theo:main1}
implies Theorem \ref{theo:main1a}. Analogous results also hold for
  modified Calabi flow and extK metrics (cf.
Theorem \ref{theo:main1aext}).

\subsection{The Calabi flow on the level of K\"ahler metrics}
According to the rich geometries of the Calabi flow, it could be
formulated from different point of views. The  equation \eqref{cf}
of K\"ahler potentials can be written as  the evolution equation of
 K\"ahler metrics
\begin{align}\label{cf metric}
\frac{\partial}{\partial t}\om_\varphi &=\sqrt{-1}\p\bar\p S(\varphi).
\end{align}
In the following, we  study the Calabi flow of the K\"ahler metrics
\eqref{cf metric}. Unlike the K\"ahler Ricci flow or the mean
curvature flow, we have no maximum principle and it is difficult to
obtain the explicit lifespan of the curvature tensor. For this
reason, we introduce the following definition.
\begin{defi}\label{defi:calabiflow}Given any constants  $ \tau, \La>0$.
 A solution $\oo_t\, (t\in [0, T])(T\geq\tau)$ of Calabi flow is called
\begin{itemize}
  \item type $I(\tau, \La)$, if
$$|Rm|(t)\leq \La, \quad \forall\, t\in [0, \tau],$$
\item type $II(\tau, \La)$, if
$$|Rm|(t)+|\p\bar \p S|\leq \La, \quad \forall\, t\in [0, \tau].$$
\end{itemize}

\end{defi}

Starting from a type $(\tau, \La)$ Calabi flow with small Calabi
energy, we show the flow has long time solution and converges to a
cscK metric.

\begin{theo}\label{theo:main2}Let $(M, \oo)$ be an $n$-dimensional compact K\"ahler
manifold with no nonzero holomorphic vector fields. For any $\tau,
\La, K,
\dd>0$, there is a constant $\ee=\ee(\tau, \La, K, \dd, n, \oo)>0$ such that if
the solution $\oo_t$ of the Calabi flow with any initial metric
 $\oo_0\in [\oo]$ satisfies the following properties:
 \begin{enumerate}
   \item[(a)] the Calabi flow $\oo_t$ is of type $II(\tau, \La)$;
   \item[(b)] \beq
C_S(\oo_0)\leq K,\quad \mu_1(\oo_0)\geq \dd, \quad Ca(\oo_0)\leq \ee,
 \eeq
 \end{enumerate}
 then the Calabi flow $\oo_t$  exists for all the time and converges exponentially fast
to a cscK metric.

\end{theo}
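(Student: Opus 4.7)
The core mechanism is the spectral gap. Since $(M,\omega)$ has no nonzero holomorphic vector fields, the kernel of the Lichnerowicz operator $\mathcal{D}$ on $C^\infty(M)$ is trivial modulo constants, so the first positive eigenvalue $\mu_1(\omega_t)$ of $\mathcal{D}^*\mathcal{D}$ is strictly positive. Under the Calabi flow one has
\begin{equation*}
\frac{d}{dt}Ca(\omega_t) \;=\; -2\int_M |\mathcal{D}(S(\omega_t)-\underline S)|^2\, d\mu_t \;\leq\; -2\,\mu_1(\omega_t)\, Ca(\omega_t),
\end{equation*}
so any uniform lower bound $\mu_1(\omega_t)\geq \delta/2$ forces exponential decay $Ca(\omega_t)\leq \epsilon\, e^{-\delta t}$. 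The entire proof is structured around propagating this eigenvalue bound, together with the Sobolev and curvature bounds, for all time.

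First, I would use the type $II(\tau,\Lambda)$ hypothesis to upgrade the initial data at time $\tau$. The bound $|Rm|\leq\Lambda$ on $[0,\tau]$ makes the metrics $\omega_t$ uniformly equivalent to $\omega_0$ there, which combined with $C_S(\omega_0)\leq K$ yields a uniform Sobolev constant on $[0,\tau]$. The additional bound $|\partial\bar\partial S|\leq \Lambda$ controls $\partial_t \omega_t$ pointwise, and standard parabolic smoothing for the Calabi flow then furnishes uniform $C^\infty$ bounds for $\omega_t$ on $[\tau/2,\tau]$. Because $\mu_1$ depends continuously on the metric in a sufficiently strong topology, the eigenvalue estimates along the Calabi flow proved earlier in the paper yield $\mu_1(\omega_\tau)\geq 3\delta/4$ once $\epsilon$ is chosen small.

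The main step is a continuity/bootstrap argument for $t\geq\tau$. Let $T^*$ be the supremum of $T\geq\tau$ on which $\mu_1(\omega_t)\geq\delta/4$, $C_S(\omega_t)\leq 2K$, and $|Rm|(\omega_t)\leq 2\Lambda$ hold simultaneously. On $[0,T^*)$ the differential inequality above gives $Ca(\omega_t)\leq \epsilon\, e^{-\delta t/2}$. Via the uniform Sobolev constant, a De Giorgi--Nash--Moser iteration for the evolution of $(S-\underline S)^2$ turns this integral smallness into an exponentially decaying $L^\infty$ bound on $S-\underline S$, and interpolation with the flow equation $\partial_t\varphi=S-\underline S$ yields exponential decay of every higher derivative of $\varphi$. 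These improvements strictly sharpen each of the three defining bounds on any compact subinterval, forcing $T^*=\infty$. The resulting exponential Cauchy convergence of $\varphi_t$ in $C^\infty$ then produces a smooth cscK limit.

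The principal obstacle is the bootstrap. Unlike the second-order K\"ahler-Ricci flow used in \cite{MR2481736}, the Calabi flow admits no maximum principle and no pointwise parabolic comparison, so curvature control must be extracted purely from integral quantities. The crucial technical ingredient is the combination of a Sobolev-constant-driven Moser iteration adapted to the fourth-order operator $\mathcal{D}^*\mathcal{D}$ together with the eigenvalue estimates of the paper, which together allow smallness of $Ca$ to self-improve into pointwise bounds on all higher geometric quantities, closing the continuity argument.
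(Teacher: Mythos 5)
Your overall strategy coincides with the paper's: exponential decay of the Calabi energy from a lower bound on the first eigenvalue of $L$, combined with an open--closed continuity argument that simultaneously propagates $\mu_1$, the Sobolev constant and the curvature bound. However, two steps as you state them do not go through. First, the claim that $\mu_1(\oo_\tau)\geq 3\dd/4$ ``once $\ee$ is chosen small'' is unjustified: on $[0,\tau]$ the type $II$ hypothesis only gives $|\Na\bar\Na S|\leq\La$, so Proposition \ref{lem:eigen2} yields $\mu_1(\tau)+\La^2\geq(\dd+\La^2)e^{-26\La\tau}$, which is vacuous when $\La\tau$ is large; smallness of $Ca(\oo_0)$ does not control $|\Na\bar\Na S|$ pointwise on $[0,\tau]$, so it cannot rescue this. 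The paper instead shrinks the initial interval to a $\tau_0=\tau_0(\tau,\La,K,\dd)$ chosen so small that the eigenvalue loses only a fixed fraction of $\dd$ (Lemma \ref{lem:a1}); this is the repair you need.

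The more serious gap is your mechanism for converting $L^2$-decay of $S-\un S$ into the pointwise decay needed to close the bootstrap. What must actually be propagated is $|Rm|\leq 2\La$, via $\partial_t|Rm|\leq|\Na^4S|+|Rm|\,|\Na^2S|$, and $\mu_1\geq\dd/4$, via Proposition \ref{lem:eigen2}, which requires $\int_0^\infty|\Na\bar\Na S|\,dt$ to be small. Both demand exponentially decaying \emph{pointwise} bounds on $|\Na^iS|$ for $i$ up to $4$, not merely an $L^\infty$ bound on $S-\un S$. A ``De Giorgi--Nash--Moser iteration for the evolution of $(S-\un S)^2$'' is not available here: the scalar curvature evolves by a fourth-order equation driven by $-\Delta^2$, there is no divergence-form second-order structure, and truncation test functions do not produce a Caccioppoli-type inequality. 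The paper's actual route is elliptic and integral: under $|Rm|\leq\La$ one first gets $\int_M|\Na^kRm|^2\leq C(1+t)/t^k$ from the Chen--He evolution inequality \eqref{lem:chenhe3}, and then Lemma \ref{lem:tensor} (Sobolev inequality plus Hamilton's interpolation, Lemma \ref{lem:Ha82}) interpolates between these high-derivative $L^2$ bounds and the smallness of $Ca(t)$ to give $|\Na^iS|\leq C\,Ca(t)^{1/(8(n+1))}$ (Lemma \ref{lem:decay2}). Your proposal omits the high-derivative curvature integrals entirely, and without them the interpolation --- hence the propagation of both $|Rm|$ and $\mu_1$ --- cannot be closed.
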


The proof of Theorem \ref{theo:main2} essentially depends on the
estimates of the eigenvalue of the linearized operator of the scalar
curvature. We show that under some curvature assumptions the
eigenvalue will decay slowly along the Calabi flow, which guarantees
the exponential decay of the Calabi energy. On the other hand, the
exponential decay of the Calabi energy will force the curvature
tensor bounded along the flow, and the curvature tensor bound will
help to improve the eigenvalue estimates. Repeating this argument,
we can show the Calabi flow exists for all time and converges. The
idea of this proof is developed from \cite{MR2481736}, and here we need to
overcome several difficulties since the linearized operator of the
scalar curvature is a fourth order operator and there is no maximum
principle along the flow.

When $M$ admits non-zero holomorphic vector fields, under the holomorphic action the limit of the orbit of the complex structure might contain larger holomorphic group.
The geometric stability condition (see Definition~\ref{pre-stable}) which is called pre-stable avoids this phenomenon. It is also used in  Chen-Li-Wang \cite{MR2481736} and Phong-Sturm \cite{MR2215459}.

\begin{theo}\label{theo:main3}Let $(M, \oo)$ be a $n$-dimensional compact K\"ahler
manifold with vanishing Futaki invariant.  Assume that $M$ is pre-stable.
 For any $\tau,
\La, K>0$, there is a constant $\ee=\ee(\tau, \La, K, n, \oo)>0$ such that if
the solution $\oo_t$ of the Calabi flow with any initial metric
 $\oo_0\in [\oo]$ satisfies the following properties:
 \begin{enumerate}
   \item[(a)] the Calabi flow $\oo_t$ is of type $I(\tau, \La)$;
   \item[(b)] \beq
C_S(\oo_0)\leq K, \quad Ca(\oo_0)\leq \ee,
 \eeq
 \end{enumerate}
 then the Calabi flow $\oo_t$  exists for all the time and converges exponentially fast
to a cscK metric.\\

\end{theo}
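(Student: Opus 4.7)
The plan is to follow the cyclic bootstrap strategy used in \thmref{theo:main2}, with two essential modifications: the type $I$ curvature bound must first be upgraded to control on $\p\bar\p S$ via parabolic smoothing, and, more importantly, the eigenvalue lower bound (which was assumed a priori in \thmref{theo:main2}) must now be extracted from the pre-stability hypothesis. The scheme is cyclic: a uniform lower bound on the first eigenvalue $\mu_1$ of the Lichnerowicz operator, restricted to the orthogonal complement of holomorphic potentials, forces exponential decay of the Calabi energy; that exponential decay together with the Sobolev bound produces uniform curvature estimates along the flow; and those curvature bounds, combined with pre-stability, feed back into a uniform eigenvalue lower bound.

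I would begin by upgrading the regularity. Given $|Rm|(t)\le\La$ on $[0,\tau]$, standard parabolic smoothing for the fourth-order Calabi flow (in the spirit of Chen--He) yields $|\Na^k Rm|(t)\le C(k,\tau,\La,n)$ on $[\tau/2,\tau]$, and in particular $|\p\bar\p S|$ is uniformly bounded there, so that from time $\tau$ onward we effectively have type $II$ initial data to which the arguments of \thmref{theo:main2} can be applied.

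Next I would establish the eigenvalue lower bound. Since the Futaki invariant vanishes, $S-\un S$ is $L^2$-orthogonal to the real parts of the holomorphic potentials, hence
\beq
  \int_M |\cD S|^2\,\om_t^n \;\ge\; \mu_1(\om_t)\cdot Ca(\om_t),
\eeq
where $\mu_1(\om_t)$ is the first eigenvalue of the Lichnerowicz operator on the orthogonal complement of its kernel. The claim is that, as long as the Calabi energy remains small and the curvature and Sobolev bounds persist along the flow, there is a $\dd_0>0$, independent of $t$, with $\mu_1(\om_t)\ge\dd_0$. This is shown by contradiction: if not, a sequence $\om_{t_i}$ with $\mu_1(\om_{t_i})\to 0$ would, by the compactness from bounded curvature and bounded Sobolev constant, subconverge (smoothly modulo diffeomorphisms) to a K\"ahler metric $\om_\infty\in[\om]$ with vanishing Calabi energy, hence cscK. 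The pre-stability hypothesis guarantees that $\om_\infty$ lives on $M$ with the same complex structure, so the holomorphic automorphism group does not enlarge in the limit; accordingly $\mu_1(\om_\infty)>0$ on the orthogonal complement of its holomorphic potentials, contradicting the assumed collapse.

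Combining the eigenvalue bound with $\frac{d}{dt}Ca(\om_t)=-2\int_M|\cD S|^2\,\om_t^n$ yields $Ca(\om_t)\le e^{-2\dd_0 t}Ca(\om_0)$. Exponential decay of the Calabi energy, together with the Sobolev bound and the parabolic smoothing above, upgrades to uniform bounds on the curvature tensor and its derivatives, which extends the type $I$ bound past time $\tau$ and allows the bootstrap to be iterated over all of $[0,\infty)$. Long-time existence follows, and integrating the exponential Calabi-energy decay against the Calabi (or Mabuchi) distance on the space of K\"ahler potentials gives $C^\infty$ convergence to a cscK metric. The main obstacle is precisely the eigenvalue argument: without the a priori bound $\mu_1(\om_0)\ge\dd$ available in \thmref{theo:main2}, eigenvalue collapse along some subsequence must be excluded even in the presence of possible holomorphic vector fields, which is exactly what pre-stability is designed to do. One must also ensure that the convergence to the limiting cscK metric is strong enough (at least $C^{3,\al}$) for the Lichnerowicz operator and its lowest eigenvalue to pass to the limit, and, since no maximum principle is available for this fourth-order flow, all curvature estimates along the bootstrap must be carried out by integral and Sobolev methods.
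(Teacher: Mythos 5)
Your overall architecture is the one the paper uses: a continuity/bootstrap argument in which a uniform lower bound on the first eigenvalue of the Lichnerowicz-type operator yields exponential decay of the Calabi energy (Lemma \ref{lem:calabi2}), the decay together with the Sobolev bound controls the higher derivatives of $S$ and hence preserves the curvature and Sobolev bounds (Lemma \ref{lem:decay2} and the evolution equation \eqref{evorm}), and the set of times on which the bounds hold is therefore open and closed. Your preliminary smoothing step (upgrading the type $I$ bound to bounds on $|\Na^k Rm|$, hence on $|\p\bar\p S|$, after time $\tau/2$) is a reasonable, and in fact more explicit, version of what the paper does implicitly in Lemma \ref{lem:b1}.

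The genuine problem is your justification of the eigenvalue lower bound, which is the crux of the theorem. You argue that if $\mu_1(\om_{t_i})\to 0$ then the $\om_{t_i}$ subconverge to a limit ``with vanishing Calabi energy, hence cscK,'' and you then invoke positivity of $\mu_1$ at the cscK limit. This is either circular or false: the vanishing of the Calabi energy in the limit is exactly what the exponential decay (which you have not yet established at that point) would give you, and for a bad sequence at bounded times the limit only satisfies $Ca\le\ee$, so it need not be cscK at all. The paper's Lemma \ref{lem:key2} avoids this entirely: the eigenvalue estimate is proved for an \emph{arbitrary} metric with $|Rm|\le\La$ and $C_S\le K$, with no hypothesis on the Calabi energy, by running the compactness argument on the normalized minimizing functions $f_m$ themselves (as in Lemma \ref{lem:key1}): from $\int|\Na\Na f_m|^2\to 0$ one extracts a $W^{2,2}$-limit $f_\infty$ with $\Na f_\infty$ holomorphic, and the orthogonality $\int X(h_\infty)\,\om_\infty^n=0$ forces $\int f_\infty^2=0$, contradicting the normalization. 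Pre-stability enters only to guarantee that the automorphism group of the limiting complex structure is no larger than $\frak h_0(M)$, so that orthogonality to $\frak h_0(M)$ suffices; it does \emph{not} prevent the complex structure from moving in its orbit, as your write-up suggests. You flag the continuity of $\mu_1$ under the limit as a point ``to be ensured,'' but this is precisely the issue that the eigenfunction-level argument is designed to bypass; as written, your proof of the key estimate does not close.
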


\subsection{The organization}

The organization of this paper is as follows: In Section 2, we will
discuss the energy functionals and prove the K\"ahler non-collapsing theorem.
 In Section 3, we recall
some basic facts on the Calabi flow and the modified Calabi flow.
 In Section 4, we will estimate
the eigenvalue in various cases and prove the exponential decay of
the Calabi energy in a short time interval, which is used to 
obtain uniform bounds on the curvature. In Section 5, we will use the
results in previous sections to show Theorem \ref{theo:main1}, Theorem
\ref{theo:main1a} and  Corollary \ref{cor:main}. In Section 6, we finish the proof of
Theorem \ref{theo:main2} and Theorem \ref{theo:main3}. Moreover, we
will give some conditions on the initial metric which implies the
type
$(\tau, \La)$ Calabi flow solution.\\

 {\bf Acknowledgements}: The authors would like to express their deepest gratitude to Professor Weiyue Ding and Professor Xiuxiong Chen for their constant
encouragement during the past several years. We  would also like to thank Julius Ross for helpful discussion on his paper \cite{MR2221139} and
Weiyong He, Bing Wang and Jinxing Xu for valuable suggestion on this paper.

%%%%%%%%%%%%%

%%%
%%%%%%%%%%%%%%%%%%%%%%%%%%%%%
%%%%%%%%%%%%%%%%%%%%%
%%%%%%%%%%%%%%%%%%%%%%
\section{A K\"ahler non-collapsing theorem}\label{A Kahler non-collapsing theorem}
In this section, we prove that there is no K\"ahler collapsing by
replacing the uniform bound of the Ricci curvature by its lower
bound and the $L^p$ bound of the scalar curvature.

\subsection{Energy functionals}\label{Energy functional}
Let $(M, \oo)$ be a compact K\"ahler manifold with a K\"ahler metric
$\oo.$ The space of K\"ahler potentials is denoted by
$$\cH(M, \oo)=\{\varphi\in C^{\infty}(M, \RR)\;|\;\oo+\pbp \varphi>0\}.$$
Recall that Aubin functions $I$ and $J$ are defined on $\cH(M, \oo)$  by
 \beqs
I_\om(\vphi)&=&\frac{1}{V}\int_{M}\vphi(\om^{n}-\om_{\vphi}^{n})
=\frac{\i}{V}\sum_{i=0}^{n-1}\int_{M}\p\varphi\wedge\bar\p\vphi\wedge\om^{i}\wedge\om_\vphi^{n-1-i},\\
J_\om(\vphi)&=&\frac{\i}{V}\sum_{i=0}^{n-1}\frac{i+1}{n+1}
\int_{M}\p\vphi\wedge\bar\p\vphi\wedge\om^{i}\wedge\om^{n-1-i}_{\vphi}.
\eeqs Note that the functionals $I$ and $J$  satisfy
the inequalities
\begin{align*}
\frac{1}{n+1}I\leq J\leq\frac{n}{n+1}I.
\end{align*}
In Ding \cite{MR967024}, the Lagrangian functional of the
Monge-Amp\`ere operator is
\begin{align}\label{i functional}
D_\om(\vphi)=\frac{1}{V}\int_M\vphi\om^n-J_\om(\vphi).
\end{align}
The derivative of $D_{\oo}$ along a general path $\varphi_t\in\cH(M, \oo)$
is given by
\begin{align*}
\frac{d}{dt}D_\om(\vphi_t)=\frac{1}{V}\int_{M}\dot\vphi_t\,\om^n_{\vphi_t}.
\end{align*}
We   compute the explicit formula of $D_\om(\vphi)$ as the
following
\begin{align}
D_\om(\vphi)
&=\frac{1}{V}\sum_{i=0}^n\frac{n!}{(i+1)!(n-i)!}
\int_{M}\vphi\om^{n-i}\wedge(\pbp\vphi)^i\nonumber\\
&=\frac{1}{V}\int_M\vphi\om^n-\frac{\i}{V}\sum_{i=0}^{n-1}\frac{i+1}{n+1}
\int_{M}\p\vphi\wedge\bar\p\vphi\wedge\om^{i}\wedge\om_\vphi^{n-i-1}.
\label{eq:D}
\end{align}
Let $\cH_0(M, \oo)$ be the subspace of $\mathcal H(M, \oo)$ with the normalization condition
$$\mathcal H_0(M, \oo):=\{\vphi\in \mathcal H(M, \oo)\vert D_\om(\vphi)=0\}.$$
It is obvious that the Calabi flow stays in $\mathcal H_0(M, \oo)$ once it starts from
a K\"ahler potential in $\mathcal H_0(M, \oo)$. It is proved in Chen \cite{MR1863016} that
any two K\"ahler potentials in $\mathcal H(M, \oo)$ are connected by a $C^{1,1}$ geodesic.
The length of the $C^{1,1}$ geodesic from $0$ to $\vphi$ is given by
$$d_\om(\vphi)=\int_0^1\sqrt{\int_M\,\dot\vphi^2\om_\vphi^n}\,dt.$$
It satisfies the following inequality
\begin{align*}
d_\om(\vphi)\geq V^{-\frac{1}{2}}\max\Big\{\int_{\vphi>0}\vphi\om_\vphi^n,  \;-\int_{\vphi<0}\vphi\om^n\Big\}.
\end{align*}
%%%%%%%%%%%%%%%%%%%%%%%%%%%%%
%%%%%%%%%%%%%%%%%%%%%%%%%%%%%%
%%%%%%%%%%%%%%%%%%%%%%%%%%%%%
Recall the explicit formula of Mabuchi's $K$-energy (cf.
\cite{MR1772078}  \cite{MR1787650})
\begin{align}\label{k energy}
\nu_\om(\vphi) &=E_{\oo}(\vphi) +\as D_\om(\vphi)+j_\om(\vphi).
\end{align} The advantage of this formula is that it is well-defined for all bounded
K\"ahler metrics which may be degenerate. The first and the third
terms of (\ref{k energy}) will be discussed in the following. We call
the first functional the entropy of  K\"ahler metrics
\begin{align}
E_{\oo}(\vphi)=\int_M \log\frac{\om^n_\vphi}{\om^n}\,\om^n_\vphi.
\end{align}
The entropy $E_{\oo}(\vphi)$ is uniformly bounded below, since
\begin{align*}
\log\frac{\om_\vphi^n}{\om^n}\frac{\om_\vphi^n}{\om^n}\geq -e^{-1}.
\end{align*}
Applying Tian's $\alpha$-invariant and the Jensen inequality, we obtain a  lower bound of the entropy.
\begin{lem}\label{lem:E and I}
There are uniform constants $\dd=\dd(\oo), C=C(\oo)>0$ such that for
any $\varphi\in \cH(M, \oo)$ we have
\begin{align}
E_\om(\vphi)\geq \dd
I_{\oo}(\varphi) -C.
\end{align}
\end{lem}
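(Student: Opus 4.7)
The plan is to normalize $\varphi$ suitably, apply Tian's $\alpha$-invariant to control an exponential integral of $\varphi$, and then use Jensen's inequality against the probability measure $\om_\vphi^n/V$ to convert that exponential control into a lower bound for the entropy involving $\int_M \varphi\,\om_\vphi^n$, which is precisely what connects to $I_\om(\vphi)$.

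\medskip

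\noindent\textbf{Step 1 (Normalization).} Both $E_\om(\vphi)$ and $I_\om(\vphi)$ are invariant under $\vphi\mapsto\vphi+c$: the entropy depends only on $\om_\vphi$, and in $I_\om(\vphi)=\frac{1}{V}\int_M\vphi(\om^n-\om_\vphi^n)$ the added constant cancels because $\om$ and $\om_\vphi$ have the same total volume. So I may assume $\sup_M\vphi=0$, hence $\vphi\le 0$.

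\medskip

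\noindent\textbf{Step 2 (Tian's $\alpha$-invariant).} Since the K\"ahler class $[\om]$ is fixed, there exist $\alpha=\alpha(\om)>0$ and $C_\alpha=C_\alpha(\om)>0$ such that for every $\vphi\in\cH(M,\om)$ with $\sup_M\vphi=0$,
\begin{equation*}
\frac{1}{V}\int_M e^{-\alpha\vphi}\,\om^n\le \frac{C_\alpha}{V}.
\end{equation*}

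\medskip

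\noindent\textbf{Step 3 (Jensen).} Write $f=\log(\om_\vphi^n/\om^n)$, so $\om^n=e^{-f}\om_\vphi^n$. Apply Jensen's inequality to the convex function $e^x$ with the probability measure $\om_\vphi^n/V$:
\begin{equation*}
\exp\!\Bigl(\tfrac{1}{V}\!\int_M(-\alpha\vphi-f)\,\om_\vphi^n\Bigr)\le \tfrac{1}{V}\!\int_M e^{-\alpha\vphi-f}\om_\vphi^n=\tfrac{1}{V}\!\int_M e^{-\alpha\vphi}\om^n\le \tfrac{C_\alpha}{V}.
\end{equation*}
Taking logarithms and multiplying by $V$ gives
\begin{equation*}
-\alpha\!\int_M\vphi\,\om_\vphi^n-E_\om(\vphi)\le V\log(C_\alpha/V).
\end{equation*}

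\medskip

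\noindent\textbf{Step 4 (Identifying $I_\om$).} Using $\int_M\vphi\,\om_\vphi^n=\int_M\vphi\,\om^n-V\,I_\om(\vphi)$, the previous inequality becomes
\begin{equation*}
E_\om(\vphi)\ge \alpha V\,I_\om(\vphi)-\alpha\!\int_M\vphi\,\om^n-V\log(C_\alpha/V).
\end{equation*}
Since $\vphi\le 0$, the term $-\alpha\int_M\vphi\,\om^n\ge 0$ can be dropped, yielding
\begin{equation*}
E_\om(\vphi)\ge \alpha V\,I_\om(\vphi)-V\log(C_\alpha/V).
\end{equation*}
Setting $\delta=\alpha V$ and choosing $C\ge V\log(C_\alpha/V)$ (say $C=V|\log(C_\alpha/V)|+1$ to guarantee positivity) completes the proof.

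\medskip

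\noindent\textbf{Main obstacle.} There is no serious obstacle; the only care needed is the normalization step (so that $\vphi\le 0$ and Tian's inequality applies) and verifying that the normalization does not change either side of the desired inequality. Everything else is a direct combination of Jensen and the $\alpha$-invariant, which is the standard mechanism for bounding the entropy from below.
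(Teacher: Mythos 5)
Your proof is correct and follows exactly the route the paper indicates (the paper gives no written proof, only the remark that the bound follows from Tian's $\alpha$-invariant and Jensen's inequality, which is precisely what you carry out). The normalization $\sup_M\vphi=0$, the application of Jensen against $\om_\vphi^n/V$, and the bookkeeping with the paper's unnormalized entropy $E_\om$ versus the $V$-normalized $I_\om$ are all handled correctly.
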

The third term of (\ref{k energy}) is the $j$-functional which is
given by
\begin{align}
j_\om(\vphi)
=-\frac{1}{V}\sum_{i=0}^{n-1}\frac{n!}{(i+1)!(n-i-1)!}\int_{M}\vphi
Ric(\oo)\wedge\om^{n-1-i}\wedge(\pbp\vphi)^{i}.\label{eq:j}
\end{align}
Along a path $\varphi_t\in \cH(M, \oo)$, we have
\begin{align*}
\frac{d}{dt}j_\om(\vphi_t)=-\frac{n}{V}\int_{M}\dot\vphi_t
Ric(\oo)\wedge\om^{n-1}_{\vphi_t}.
\end{align*}
Under the topological condition that
 \begin{align}\label{jtop}
C_1(M)<0 \text{ and } -n\frac{C_1(M)\cdot[\oo]^{n-1}}{[\oo]^n}[\oo]+(n-1)C_1(M)>0,
\end{align} the $j$-flow which is the gradient flow of
the $j$-functional converges and
 thus the $j$-functional has lower bound
 (see Chen \cite{MR1772078}, Song-Weinkove \cite{MR2368374}).
 Combining this result with \eqref{k energy}, we get that
 along the Calabi flow the entropy $E$ has upper bound.
When $C_1(M)<0$ and $M$ admits a cscK metric, it is observed by J.
Streets \cite{Streets:2012uq} that along the Calabi flow the lower
bound of $j$ is controlled by the geodesic distance. Since the
geodesic distance to a cscK metric (which is stable along the Calabi
flow) is decreasing, the functional $j$  is bounded below. Thus
along the Calabi flow the entropy $E$ also has upper bound. In
general, in order to obtain the upper bound of the entropy, we
require the ``coercive" condition on the $K$-energy. The properness
of $K$-energy is introduced by Tian to prove the existence of
K\"ahler-Einstein metrics on Fano manifolds in \cite{MR1471884}. The space of K\"ahler metrics equipped with $L^2$-metric introduced by Donaldson
  \cite{MR1736211}, Mabuchi \cite{MR909015} and Semmes
  \cite{MR1165352} is an infinite-dimensional, nonpositive curved,
  symmetric space, its geodesic distance function is a natural positive
  function. Chen defined another properness of the $K$-energy
regarding to the entropy in \cite{MR2471594}  and the geodesic distance $d$ in \cite{MR1772078}. We list them as the following.
\begin{defn}\label{E and I}
 Let $\rho(t)$ be a nonnegative, non-decreasing
functions  satisfying $\lim_{t\rightarrow\infty}\rho(t)=\infty$. The
$K$-energy is called to be:
\begin{itemize}
\item $I$-proper on $\cH(M, \oo)$, if
$ \nu_{\oo}(\vphi)\geq \rho(I_{\oo}(\varphi)) $ for all
$\varphi\in \cH(M, \oo);$
\item $E$-proper on $\cH(M, \oo)$, if
$\nu_{\oo}(\vphi)\geq \rho(E_{\oo}(\varphi)) $ for all
$\varphi\in \cH(M, \oo);$
\item $d$-proper on $\cH(M, \oo)$, if
$\nu_{\oo}(\vphi)\geq \rho(d_{\oo}(\varphi)) $ for all
$\varphi\in \cH(M, \oo).$
\end{itemize}
\end{defn}
By \lemref{E and I}, we see that $E$-properness implies
$I$-properness. The following result says that the converse is also
true in our situation.

\begin{lem}\label{lem:proper}Fix a constant $C$.
Let $\cH_C$ be the set of functions $\varphi\in \cH(M, \oo)$ which satisfies
the inequality \beq \osc(\varphi)\leq I_{\oo}(\varphi)+C.
\label{eq:C}\eeq
 Then the $K$-energy is $E$-proper on $\cH_C$ if and
only if the $K$-energy is $I$-proper on $\cH_C.$
\end{lem}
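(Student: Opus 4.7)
The plan is to use the Mabuchi decomposition \eqref{k energy} together with the oscillation bound on $\cH_C$ to show that $E_\om(\vphi)$ and $\nu_\om(\vphi)$ differ by a quantity controlled linearly by $I_\om(\vphi)$, and then convert between the two notions of properness. The forward direction, $E$-proper $\Rightarrow$ $I$-proper, is immediate and holds even without the restriction to $\cH_C$: by \lemref{lem:E and I} we have $E_\om(\vphi) \geq \dd\, I_\om(\vphi) - C$, so if $\nu_\om(\vphi) \geq \rho(E_\om(\vphi))$ with $\rho$ nondecreasing and $\rho\to\infty$, then $\tilde\rho(s) := \rho(\dd s - C)$ is nondecreasing, tends to infinity, and satisfies $\nu_\om(\vphi) \geq \tilde\rho(I_\om(\vphi))$.

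For the converse I first normalize $\sup\vphi = 0$, which is legitimate since each of $E_\om, I_\om, J_\om, D_\om, j_\om, \nu_\om$ is invariant under the addition of a constant to $\vphi$. Then $|\vphi|_\infty = \osc\vphi$, and on $\cH_C$ this yields $|\vphi|_\infty \leq I_\om(\vphi) + C$. From \eqref{i functional} and the standard inequality $J_\om \leq \tfrac{n}{n+1} I_\om$,
\[
|D_\om(\vphi)| \leq |\vphi|_\infty + J_\om(\vphi) \leq 2\, I_\om(\vphi) + C.
\]
For the $j$-functional I exploit the pointwise bound $-C_R\,\om \leq Ric(\om) \leq C_R\,\om$. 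Expanding $(\pbp\vphi)^i = (\om_\vphi-\om)^i$ in \eqref{eq:j} as a signed sum of positive $(i,i)$-forms $\om_\vphi^k \wedge \om^{i-k}$ and using that each mixed integral $\int \om^{n-k}\wedge\om_\vphi^k$ equals the cohomological constant $V$, I obtain
\[
|j_\om(\vphi)| \leq C'\, |\vphi|_\infty \leq C'\, I_\om(\vphi) + C''.
\]
Combining these with $\nu_\om(\vphi) = E_\om(\vphi) + \as D_\om(\vphi) + j_\om(\vphi)$ gives the two-sided comparison
\[
|E_\om(\vphi) - \nu_\om(\vphi)| \leq C_5\, I_\om(\vphi) + C_6.
\]

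Assuming now $I$-properness $\nu_\om \geq \rho(I_\om)$, I finish by a dichotomy on the relative size of $I_\om$ and $E_\om$. If $I_\om(\vphi) \geq E_\om(\vphi)/(2 C_5)$, then $\nu_\om(\vphi) \geq \rho\bigl(E_\om(\vphi)/(2C_5)\bigr)$; otherwise $C_5\, I_\om(\vphi) \leq E_\om(\vphi)/2$ and the comparison gives $\nu_\om(\vphi) \geq E_\om(\vphi)/2 - C_6$. Setting $\tilde\rho(t) := \min\{\rho(t/(2C_5)),\, t/2 - C_6\}$, and replacing $\tilde\rho(t)$ by $\max\{0,\inf_{s\geq t}\tilde\rho(s)\}$ to ensure monotonicity and nonnegativity, produces a nondecreasing function with $\tilde\rho\to\infty$ satisfying $\nu_\om(\vphi) \geq \tilde\rho(E_\om(\vphi))$ on $\cH_C$, which is $E$-properness. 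The main obstacle is the linear bound $|j_\om(\vphi)|\leq C'|\vphi|_\infty$; it rests on the cohomological identity $\int \om^{n-k}\wedge\om_\vphi^k = V$, which is the essential place where one uses that $\vphi$ is a K\"ahler potential for a metric in the fixed class $[\om]$.
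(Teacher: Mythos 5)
Your proof is correct and follows the same overall strategy as the paper's: the forward direction via Lemma~\ref{lem:E and I}, and the converse via the decomposition $\nu_\om=E_\om+\un S D_\om+j_\om$ of (\ref{k energy}) together with linear-in-$I_\om$ bounds on $D_\om$ and $j_\om$ coming from the oscillation hypothesis (\ref{eq:C}). Two points differ in execution. For the $j$-functional the paper integrates by parts, turning each term $\int_M\vphi\,Ric(\om)\wedge\om^{n-2-k}\wedge\om_\vphi^k\wedge\pbp\vphi$ into $\int_M\i\p\vphi\wedge\bar\p\vphi\wedge Ric(\om)\wedge\cdots$ and bounding it by $I_\om(\vphi)$ via $Ric(\om)\leq C_2\om$; you instead expand $(\pbp\vphi)^i=(\om_\vphi-\om)^i$ and use the sup-norm of the normalized potential together with the cohomological identity $\int\om^{n-k}\wedge\om_\vphi^k=V$. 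Both are valid; yours uses the oscillation bound more heavily, the paper's uses it only for the $i=0$ term. Second, your final dichotomy producing an explicit $\tilde\rho$ with $\nu_\om\geq\tilde\rho(E_\om)$ is more careful than the paper's one-line conclusion (``if $\nu_\om$ is bounded above then $E_\om$ is bounded''), and is a genuine improvement in rigor. One small inaccuracy: $D_\om$ and $j_\om$ are \emph{not} individually invariant under $\vphi\mapsto\vphi+c$ (they shift by $c$ and $-c\,\un S$ respectively, the shifts cancelling in $\nu_\om$); the normalization $\sup\vphi=0$ is nonetheless legitimate because $\nu_\om$, $E_\om$, $I_\om$ and membership in $\cH_C$ are invariant and all your intermediate bounds are computed for the normalized potential, so this does not affect the argument.
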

\begin{proof}That $E$-properness implies $I$-properness follows from
\lemref{lem:E and I}. It suffices to show the converse.
Suppose that the $K$-energy is $I$-proper on $\cH_C$.  From  \eqref{eq:C},
$\int_M\,\varphi\,\oo^n$ is bounded by $I_\om(\vphi)+C$.
So from (\ref{eq:D}) the function $D_{\oo}(\varphi)$ satisfies
$$|D_{\oo}(\varphi)|\leq C_1(I_{\oo}(\varphi)+1),\quad \forall\,\varphi\in \cH_C$$
for some constant $C_1$. On the other hand, each term of
(\ref{eq:j}) can be written as a linear combination of the following
expressions \beq \int_M\; \varphi  Ric(\oo)\wedge \oo^{n-1}, \quad
\int_M\;\varphi Ric(\oo)\wedge \oo^{n-2-k}\wedge
\oo_{\varphi}^k\wedge \pbp \varphi,\quad k=0, \cdots, n-2.
\label{eq:E}\eeq Since $\varphi\in \cH_C$, the first term of
(\ref{eq:E}) can be controlled by $I_{\oo}(\varphi).$ The rest terms
of (\ref{eq:E}) can be estimated as follows: \beqs
&&\Big|\int_M\;\varphi Ric(\oo)\wedge \oo^{n-2-k}\wedge
\oo_{\varphi}^k\wedge \pbp
\varphi\Big|\\
&=& \Big|\int_M\;\i\p\varphi\wedge \bar \p \varphi\wedge
Ric(\oo)\wedge \oo^{n-2-k}\wedge \oo_{\varphi}^k\Big|\\
&\leq &C_2\int_M\;\i\p\varphi\wedge \bar \p \varphi \wedge
\oo^{n-1-k}\wedge \oo_{\varphi}^k\\&\leq& C_3I_{\oo}(\varphi),\eeqs
where we used $Ric(\oo)\leq C_2\oo$ in the first inequality. Combining
the above estimates with (\ref{eq:j}), we have
$$|j_{\oo}(\varphi)|\leq C_4I_{\oo}(\varphi).$$
Thus, by (\ref{k energy}) we have \beq
E_{\oo}(\varphi)=\nu_{\oo}(\varphi)-\un
SD_{\oo}(\varphi)-j_{\oo}(\varphi)\leq \nu_{\oo}(\varphi)
+C_5I_{\oo}(\varphi).\no\eeq
Here all $C_i$ are positive constants which depends only on $C, n$ and
$\oo.$
Therefore, if the $K$-energy is
$I$-proper and is bounded from above, the functional $E_{\oo}$ is
also bounded. Thus, the $K$-energy is $E$-proper. The lemma is
proved.

\end{proof}

In the case of extremal K\"ahler metrics, we can define
the modified $K$-energy by (\ref{mkenergy}) for the $K$-invariant K\"ahler
potentials (see Section \ref{Sec:Calabiflow} for the details).
As in Definition \ref{E and
I}, we can define the $E$ and $I$ properness for the modified
$K$-energy. After slightly modification, we can prove the following
result as Lemma \ref{lem:proper}.

\begin{lem}\label{lem:proper2}Fix a constant $C$.
Let $\cH_C^K$ be the set of $K$-invariant functions $\varphi\in \cH(M, \oo)$ which satisfies
the inequality \beq \osc(\varphi)\leq I_{\oo}(\varphi)+C.
\label{eq:C2}\eeq
 Then the modified $K$-energy is $E$-proper on $\cH_C^K$ if and
only if the modified $K$-energy is $I$-proper on $\cH_C^K.$
\end{lem}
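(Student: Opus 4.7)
The plan is to mimic the proof of \lemref{lem:proper} verbatim, isolating the one genuinely new ingredient: a linear correction term in the modified $K$-energy that comes from the extremal holomorphic vector field. Once that correction is shown to be controlled by $I_{\oo}(\varphi)$ on $\cH_C^K$, the decomposition argument of \lemref{lem:proper} carries through without change.

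The direction ``$E$-proper $\Rightarrow$ $I$-proper'' is immediate from \lemref{lem:E and I} applied to any $\vphi\in\cH_C^K$, since that lemma is purely metric and makes no reference to $K$-invariance. For the converse, I would write the modified $K$-energy (\ref{mkenergy}) as
\[
\nu_{\oo}^{mod}(\vphi) \;=\; E_{\oo}(\vphi) + \underline{S}\,D_{\oo}(\vphi) + j_{\oo}(\vphi) + \mathcal{T}(\vphi),
\]
where $\mathcal{T}(\vphi)$ is the additional term involving a holomorphy potential $\theta$ of the extremal vector field, schematically of the form $\int_M \vphi\,\theta\,(\oo^n-\oo_\vphi^n)$ or a linear combination of integrals $\int_M \vphi\,\theta\,\oo^{n-1-k}\wedge \oo_\vphi^k\wedge \pbp\vphi$. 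The terms $E_{\oo}$, $D_{\oo}$, and $j_{\oo}$ are handled exactly as in \lemref{lem:proper}: the hypothesis (\ref{eq:C2}) together with the expansion (\ref{eq:D}) yields $|D_{\oo}(\vphi)|\leq C_1(I_{\oo}(\vphi)+1)$, and integration by parts plus $Ric(\oo)\leq C_2\oo$ yields $|j_{\oo}(\vphi)|\leq C_3 I_{\oo}(\vphi)$.

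The new step is to bound $|\mathcal{T}(\vphi)|\leq C_4 I_{\oo}(\vphi)+C_5$. Since $\theta$ is a fixed smooth function on $M$ (depending only on $\oo$), each summand of $\mathcal{T}(\vphi)$ has the same shape as the summands of $j_{\oo}(\vphi)$ but with $Ric(\oo)$ replaced by $\theta\,\oo$. Thus an identical integration-by-parts argument, together with the $K$-invariance of $\vphi$ (which is needed only to make $\mathcal{T}(\vphi)$ well-defined and preserved along the relevant paths), gives the required bound. Combining these estimates,
\[
E_{\oo}(\vphi) \;\leq\; \nu_{\oo}^{mod}(\vphi) + C_6\,I_{\oo}(\vphi) + C_7
\]
on $\cH_C^K$, from which $I$-properness of $\nu_{\oo}^{mod}$ transfers to $E$-properness precisely as in the last line of \lemref{lem:proper}'s proof.

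The only real subtlety is checking that the explicit formula (\ref{mkenergy}) for the modified $K$-energy really does decompose into $\nu_{\oo}$ plus a term linear in $\vphi$ against a bounded function; if instead $\mathcal{T}(\vphi)$ involved, say, quadratic expressions in $\vphi$ with no derivative, one would need a separate argument using Tian's $\alpha$-invariant. I expect this not to happen since the extremal term is by construction a primitive of a linear functional on $\dot\vphi$, but verifying the exact form of $\mathcal{T}$ from (\ref{mkenergy}) will be the one step that requires care.
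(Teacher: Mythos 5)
Your overall strategy --- writing $\td\nu_{\oo}=\nu_{\oo}+\mathcal{T}$ and reducing everything to the single new bound $|\mathcal{T}(\vphi)|\leq C'I_{\oo}(\vphi)$ on $\cH_C^K$ --- is exactly the paper's, and your treatment of $E_{\oo}$, $D_{\oo}$ and $j_{\oo}$ is correctly imported from Lemma~\ref{lem:proper}. The gap sits precisely in the step you flagged at the end, and your expectation about the form of $\mathcal{T}$ is wrong. From (\ref{mkenergy}) one has $\mathcal{T}(\vphi)=\int_0^1 dt\int_M\dot\vphi_t\,\te_{\Re V}(\vphi_t)\,\oo_{\vphi_t}^n$, and the holomorphy potential is \emph{not} a fixed function on $M$: with respect to $\oo_{\vphi_t}$ it is $\te_{\Re V}+\Re V(\vphi_t)$. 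Choosing the linear path $\vphi_t=t\vphi$, the paper obtains
\[
\mathcal{T}(\vphi)=\int_0^1 dt\int_M\vphi\,\bigl(\te_{\Re V}+t\,X(\vphi)\bigr)\bigl((1-t)\oo+t\oo_\vphi\bigr)^n,
\]
so besides terms of the shape $\int_M\vphi\,\te_{\Re V}\,\oo^i\wedge\oo_\vphi^{n-i}$ (which are indeed handled like the first term of (\ref{eq:E}) using (\ref{eq:C2})) there are genuinely quadratic terms $\int_M\vphi\,X(\vphi)\,\oo^i\wedge\oo_\vphi^{n-i}$. These do not have the shape of any summand of $j_{\oo}(\vphi)$, and the integration-by-parts argument you propose does not apply to them.

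The missing ingredient is not Tian's $\alpha$-invariant but the fact, cited in the paper from Futaki--Mabuchi and Zhu, that the holomorphy potential $\te_{\Re V}+X(\vphi)$ is uniformly bounded over all of $\cH(M,\oo)$: its range is governed by the image of the moment map, which depends only on the K\"ahler class. Hence $|X(\vphi)|\leq C$ independently of $\vphi$, and
\[
\Big|\int_M\vphi\,X(\vphi)\,\oo^i\wedge\oo_\vphi^{n-i}\Big|\leq C\int_M|\vphi|\,\oo^i\wedge\oo_\vphi^{n-i},
\]
which is then controlled via $\osc(\vphi)\leq I_{\oo}(\vphi)+C$ exactly as the zeroth-order terms are. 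With this input your argument closes and coincides with the paper's proof; without it the quadratic terms are a priori unbounded and the ``same shape as $j_{\oo}$'' claim fails.
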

\begin{proof}It suffices to show the sufficiency part. By the
expression of the modified $K$-energy,
$$\td \nu_{\oo}(\varphi)=\nu_{\oo}(\varphi)+\int_0^1\;dt\int_M\; \pd {\varphi_t}t
\te_{\Re V}(\varphi_t)\,\oo_{\varphi_t}^n,$$ where $\varphi_t\in
\cH_C^K$ is a path from $0$ to $\varphi.$ Following the argument of
Lemma \ref{lem:proper}, we only need to show \beq
\Big|\int_0^1\;dt\int_M\; \pd {\varphi_t}t \te_{\Re
V}(\varphi_t)\,\oo_{\varphi_t}^n\Big|\leq C'I_{\oo}(\varphi)
\label{eq:C3} \eeq for some constant $C'=C'(n, \oo)>0.$ Choosing the path $\varphi_t=t\varphi(0\leq t\leq
1)$, we calculate the left hand side of (\ref{eq:C3}) \beq
\int_0^1\;dt\int_M\; \pd {\varphi_t}t \te_{\Re
V}(\varphi_t)\,\oo_{\varphi_t}^n=\int_0^1\;dt\int_M\varphi (\te_{\Re
V}
+t\,X(\varphi))\Big((1-t)\oo_g+t\oo_{\varphi}\Big)^n.\label{eq:C4}\eeq
The right hand side of (\ref{eq:C4}) is a linear combination of the
following expressions: \beq \int_M\;\varphi \te_{\Re V}\oo_g^i\wedge
\oo_{\varphi}^{n-i},\quad \int_M\;\varphi
\,X(\varphi)\,\oo_g^i\wedge \oo_{\varphi}^{n-i},\quad i=0,\cdots, n.
\label{eq:C5} \eeq Since $\varphi$ satisfies the condition
(\ref{eq:C2}) and $X(\varphi)$ is bounded (cf. \cite{MR1314584}\cite{MR1817785}),
all the expressions in (\ref{eq:C5}) are bounded by
$I_{\oo}(\varphi).$ Thus, (\ref{eq:C3}) is proved and the lemma
holds.

\end{proof}

\subsection{Linear estimate}
\begin{lem}\label{lem:sobolev}(Sobolev inequality, Hebey
\cite{MR1385278}) Let  $(M,g)$ be an $m$-dimensional complete
Riemannian manifold with Ricci curvature bounded below and positive
injective radius. For any $\eps>0$ and $1\leq q<m$, there exists
constant $A(q,\eps)$ such that for any $f\in W^{1,q}(M)$,
\begin{align*}
(\frac{1}{V}\int_M|f|^{p}dV_g)^{\frac{1}{p}} \leq
(K(m,p)+\eps)\Big(\frac{1}{V}\int_M|\nabla
f|^qdV_g\Big)^\frac{1}{q}+A(q,\eps)\Big(\frac{1}{V}\int_Mf^qdV_g\Big)^\frac{1}{q},
\end{align*} where the constant $p$ is defined by  $\frac{1}{p}+\frac{1}{m}=\frac{1}{q}.$
\end{lem}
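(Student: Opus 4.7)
The plan is to prove this sharp Sobolev inequality by a localization argument: reduce to the Euclidean sharp Sobolev inequality on small balls, then patch together with a partition of unity, absorbing the errors into the lower-order term with constant $A(q,\varepsilon)$.

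First I would fix $\varepsilon>0$ and recall the Euclidean sharp Sobolev inequality $\|u\|_{L^p(\RR^m)}\le K(m,p)\|\nabla u\|_{L^q(\RR^m)}$ for $u\in C_c^\infty(\RR^m)$. The goal is to transfer this to $M$. Because $\operatorname{Ric}\ge -(m-1)\kappa\, g$ and $\mathrm{inj}(M)\ge i_0>0$, the exponential map $\exp_x\colon B_0(r)\subset T_xM\to B_r(x)$ is a diffeomorphism for $r<i_0$, and in normal coordinates the metric coefficients $g_{ij}$ converge to $\delta_{ij}$ as $r\to 0$, uniformly in $x$. Choose $r=r(\varepsilon)$ so small that the distortion from pulling back $f$ via $\exp_x$ shifts the Euclidean Sobolev constant by at most $\varepsilon/2$; this gives a local Sobolev inequality on each $B_r(x)$ with constant $K(m,p)+\varepsilon/2$ (and no lower-order term for functions compactly supported in the ball).

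Next I would build an efficient cover. Using Bishop–Gromov relative volume comparison (which needs only the Ricci lower bound), one produces a maximal $r/2$-separated set $\{x_i\}$; then $\{B_r(x_i)\}$ covers $M$ with uniformly bounded multiplicity $N=N(m,\kappa,r)$. A standard construction yields a partition of unity $\{\eta_i\}$ subordinate to this cover with $0\le\eta_i\le 1$, $\sum\eta_i=1$, and $|\nabla \eta_i|\le C/r$. For any $f\in W^{1,q}(M)$, write $f=\sum_i \eta_i f$. Applying the local inequality on $B_r(x_i)$ to $\eta_i f$:
\begin{equation*}
\Bigl(\tfrac{1}{V}\!\int_{B_r(x_i)}\!|\eta_i f|^p\,dV_g\Bigr)^{1/p}\!\le (K(m,p)+\tfrac{\varepsilon}{2})\Bigl(\tfrac{1}{V}\!\int_{B_r(x_i)}\!|\nabla(\eta_i f)|^q\,dV_g\Bigr)^{1/q}.
\end{equation*}
Raising to the $p$-th power, summing over $i$, and using $|a+b|^p\le |a|^p + \ldots$ type inequalities together with $\sum \eta_i^p \le 1$ on the left and bounded multiplicity on the right, I obtain a global estimate of the form
\begin{equation*}
\bigl(\tfrac{1}{V}\!\textstyle\int_M |f|^p\bigr)^{1/p} \le (K(m,p)+\tfrac{\varepsilon}{2}) N^{1/q} \bigl(\tfrac{1}{V}\!\textstyle\int_M |\nabla f|^q\bigr)^{1/q} + \tfrac{C'}{r}\bigl(\tfrac{1}{V}\!\textstyle\int_M |f|^q\bigr)^{1/q},
\end{equation*}
which already looks like what we want except for the multiplicative factor $N^{1/q}$ on the gradient term.

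The main obstacle, and the most delicate step, is removing this multiplicity factor to get the sharp constant $K(m,p)+\varepsilon$. The standard trick is to replace the partition of unity by one in which at most one $\eta_i$ is nonzero at each point up to an arbitrarily small measure set, or equivalently to use a concentration argument: if the claimed inequality failed for every $A$, one extracts a sequence $f_k$ normalized in $L^p$ with $\|\nabla f_k\|_q\to (K(m,p)+\varepsilon)^{-1}$ and $\|f_k\|_q\to 0$; the mass must concentrate at a point, and rescaling by the exponential map produces a Euclidean extremizer that contradicts the sharp Euclidean Sobolev constant being $K(m,p)$, not $K(m,p)+\varepsilon$. An iteration/bootstrap on $r$ (sending $r\to 0$ while letting $A(q,\varepsilon)\to\infty$) then yields the stated inequality. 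Since this is precisely Hebey's argument in \cite{MR1385278}, I would conclude by citing it, having reduced the proof to the Euclidean sharp Sobolev inequality plus volume comparison and a covering lemma.
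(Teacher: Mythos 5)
The paper does not prove this lemma: it is quoted verbatim from Hebey \cite{MR1385278}, so the only honest comparison is with the argument in that reference, which you also end up citing. As an outline your localization strategy is the right one, but two steps that you treat as routine are where all the content lies. First, the claim that normal coordinates at a uniform scale $r(\eps)$ make $g_{ij}$ uniformly $\eps$-close to $\delta_{ij}$ does not follow from the hypotheses: on a complete noncompact manifold a lower Ricci bound plus a lower injectivity radius bound do not control the sectional curvature, so the normal-coordinate expansion has no uniform rate in $x$. The uniform local comparison with Euclidean space is obtained via the Anderson--Cheeger harmonic radius estimate \cite{MR1158336} (a $C^\alpha$, respectively $W^{1,q}$, bound on the metric in harmonic coordinates on balls of a definite size), and this is precisely why those two hypotheses appear in the statement.

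Second, and more seriously, your globalization loses the sharp constant and neither of your proposed repairs is the one that works. Writing $f=\sum_i\eta_i f$ and applying the local inequality to each $\eta_i f$ in the first-power form forces the factor $N^{1/q}$ you identify; an unspecified ``better partition of unity'' cannot remove it, and the concentration-compactness alternative must rule out escape of mass to infinity on a noncompact manifold, which you do not address. Hebey's actual device is to prove the inequality between $q$-th powers, $\|f\|_p^q\le (K(m,p)+\eps)^q\|\nabla f\|_q^q+A\|f\|_q^q$: since $\|f\|_p^q=\bigl\||f|^q\bigr\|_{p/q}$ and $p/q>1$, one writes $|f|^q=\sum_i\eta_i|f|^q$ and uses the triangle inequality in $L^{p/q}$, so that the identity $\sum_i\eta_i=1$ is used exactly on the gradient term and the covering multiplicity contaminates only the coefficient $A$ of the $L^q$ term, which is allowed to blow up as $\eps\to0$. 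The stated first-power form then follows by taking $q$-th roots. Without this (or an equivalent mechanism) your sketch does not yield the leading constant $K(m,p)+\eps$.
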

In the following, we call $C_S=\max\{(K(m,p)+\eps),A(q,\eps)\}$ the
Sobolev constant.

\begin{lem}(Poincar\'e inequality, Li-Yau \cite{MR573435}) Let
$(M,g)$ be a $m$-dimensional complete Riemannian manifold with Ricci
curvature bounded below and bounded diameter, then for any $f\in
W^{1,q}(M)$ we have
\begin{align*}
\frac{1}{V}\int_M(f-\frac{1}{V}\int_Mf\,dV_g)^2\,dV_g
\leq\frac{1}{\la_1 V}\int_M|\nabla f|^2\,dV_g.
\end{align*}
\end{lem}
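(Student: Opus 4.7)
The plan is to deduce this inequality directly from the variational (Rayleigh quotient) characterization of the first non-zero Neumann eigenvalue $\la_1$ of the (positive) Laplacian on $(M,g)$. The essential nontrivial content, which I would import from the cited Li--Yau paper, is that under a lower Ricci bound and a diameter upper bound the first non-zero eigenvalue satisfies a strictly positive, explicit lower bound depending only on these two quantities. Once this positivity is known, the lemma is a short spectral argument.

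First, I would set $\bar f=\frac{1}{V}\int_M f\,dV_g$ and replace $f$ by $h:=f-\bar f$, which satisfies $\int_M h\,dV_g=0$ and $\nabla h=\nabla f$. So it suffices to prove $\int_M h^2\,dV_g\le \la_1^{-1}\int_M |\nabla h|^2\,dV_g$ for any $h\in W^{1,2}(M)$ with zero mean. Next, I would invoke the spectral theorem for the Laplacian on the compact Riemannian manifold $(M,g)$ (compactness follows from completeness together with the bounded diameter hypothesis), which furnishes an $L^2$-orthonormal basis $\{\phi_k\}_{k\ge 0}$ of eigenfunctions with eigenvalues $0=\la_0<\la_1\le \la_2\le\cdots$, where $\phi_0$ is constant. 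Writing $h=\sum_{k\ge 1} a_k\phi_k$ (the $k=0$ term vanishes because $h$ has zero mean) gives
\begin{equation*}
\int_M h^2\,dV_g=\sum_{k\ge 1}a_k^2,\qquad \int_M|\nabla h|^2\,dV_g=\sum_{k\ge 1}\la_k a_k^2\ge \la_1\sum_{k\ge 1}a_k^2,
\end{equation*}
which is the claimed estimate. Equivalently, one may just quote the min-max formula
\begin{equation*}
\la_1=\inf\Bigl\{\tfrac{\int_M|\nabla h|^2\,dV_g}{\int_M h^2\,dV_g}:\ h\in W^{1,2}(M)\setminus\{0\},\ \textstyle\int_M h\,dV_g=0\Bigr\},
\end{equation*}
and apply it to $h=f-\bar f$.

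The only place where the geometric hypotheses enter is the positivity of $\la_1$, and this is the main (non-routine) obstacle: one needs the Li--Yau lower bound
\begin{equation*}
\la_1\ge \frac{C(m)}{\operatorname{diam}(M,g)^2}
\end{equation*}
(with a constant depending on the Ricci lower bound), proved by a maximum principle applied to a well-chosen quantity involving $|\nabla\log(\phi_1+c)|^2$ with a suitable shift $c$. I would simply cite this rather than reprove it, since the present lemma is being used as a black-box analytic input for the noncollapsing and Calabi-flow arguments that follow.
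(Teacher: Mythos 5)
Your proposal is correct and consistent with how the paper treats this statement: the lemma is quoted from Li--Yau without proof, and as you observe, the displayed inequality is nothing but the Rayleigh-quotient (min--max) characterization of the first nonzero eigenvalue $\la_1$ after subtracting the mean, valid on any closed manifold. The Ricci and diameter hypotheses enter only through the Li--Yau quantitative lower bound $\la_1\geq C(m,\inf Ric)/\mathrm{diam}^2$, which is exactly the black-box input the paper relies on later (in Lemma \ref{lem:poincare}) to make the Poincar\'e constant uniform.
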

In the following, we call $C_p=\frac{1}{\la_1 V}$ the Poincar\'e
constant. The following result says the Poincar\'e constant is
bounded by the Sobolev constants and the lower bound of the Ricci
curvature.

\begin{lem}\label{lem:poincare}
Let $\cC$ be a set of K\"ahler metrics in the K\"ahler class $\Om$
such that
  the K\"ahler metrics in $\cC$ have uniform Sobolev
constant and uniform lower bound of the Ricci curvature. Then the
K\"ahler metrics in $\cC$ have the uniform Poincar\'e constant.
\end{lem}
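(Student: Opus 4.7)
The conclusion reduces to a uniform positive lower bound on the first nonzero Laplace eigenvalue $\la_1(g)$ for $g\in\cC$: in a fixed K\"ahler class the total volume $V=\int_M\oo^n/n!$ is a topological invariant, so the Poincar\'e constant $C_p(g)=1/(\la_1(g)V)$ is bounded above precisely when $\la_1(g)$ is bounded below. My plan is to extract a uniform upper bound on $\mathrm{diam}(M,g)$ from the Sobolev hypothesis together with the fixed total volume, and then apply the classical Li-Yau (Zhong-Yang) eigenvalue inequality: under a uniform Ricci lower bound $\mathrm{Ric}(g)\geq-(m-1)K$ and $\mathrm{diam}(M,g)\leq D$ one has $\la_1(g)\geq c(K,D,m)>0$.

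The key step is the diameter bound, which rests on a Euclidean-type non-collapsing of small balls derived from \lemref{lem:sobolev} alone: there exist constants $r_0,c_0>0$ depending only on $C_S$, $m$ and $V$ such that
$$\vol_g(B_g(x,r))\ \geq\ c_0\,r^m,\qquad\forall\,g\in\cC,\ x\in M,\ r\in(0,r_0].$$
This is standard (cf. Hebey \cite{MR1385278}): applying the Sobolev inequality with $q=2$ to a Lipschitz cutoff $\phi$ equal to $1$ on $B_g(x,r/2)$, supported in $B_g(x,r)$, and satisfying $|\nabla\phi|\leq C/r$ yields the claimed volume lower bound once $r$ is small enough that the gradient term in the Sobolev inequality dominates the lower-order $L^2$-term.

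The diameter bound then follows from a ball-packing argument: along a minimizing geodesic $\ga$ realizing $D(g):=\mathrm{diam}(M,g)$, place centers $x_i=\ga(i r_0)$ for $i=0,\ldots,\lfloor D(g)/r_0\rfloor$; the balls $B_g(x_i,r_0/2)$ are pairwise disjoint by the triangle inequality and each has volume at least $c_0(r_0/2)^m$, so summing and using $\vol_g(M)=V$ forces
$$D(g)\ \leq\ \frac{2^m V}{c_0\,r_0^{m-1}}=:D_0.$$
Combining the uniform Ricci lower bound with $D(g)\leq D_0$ and invoking Li-Yau produces a uniform $\la_1(g)\geq c_1(K,D_0,m)>0$, hence $C_p(g)\leq 1/(c_1 V)$ uniformly on $\cC$. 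The main obstacle is the first step, extracting the Euclidean small-ball volume non-collapsing from the normalized Sobolev inequality of \lemref{lem:sobolev}; it is technical but classical, and everything else in the argument is formal.
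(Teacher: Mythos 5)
Your proposal is correct and follows essentially the same route as the paper: Sobolev bound $\Rightarrow$ Euclidean-type volume lower bound for small balls $\Rightarrow$ diameter bound via fixed total volume $\Rightarrow$ Li--Yau eigenvalue estimate under the Ricci lower bound. The only cosmetic difference is that you derive the small-ball non-collapsing directly from the Sobolev inequality with a cutoff-function argument, whereas the paper simply cites Carron for that step.
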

\begin{proof}
Since the Sobolev constant is bounded, by Carron
\cite{MR1427759} there are two positive constants $r_0$ and $\kappa$ such that
$$\vol(B(p, r))\geq \kappa r^{2n},\quad \forall \,r\in (0, r_0).$$
Since the volume of $\oo$ is fixed, the diameter is uniformly
bounded from above. This together with the lower boundedness of
Ricci curvature implies that the first eigenvalue of the Laplacian
operator is uniformly bounded from below (cf. Li-Yau
\cite{MR573435}). Thus, the Poincar\'e constant is uniformly
bounded.
\end{proof}

Now we consider the following linear equation on a
compact K\"ahler manifold $(M,\om_\vphi)$ of complex dimension $n$:
\begin{align}\label{linear equ ge}
\tri_\vphi v = f.
\end{align}
Since the metrics $\oo_{\varphi}$ are in the same K\"ahler class, we
assume the volume of $\oo_{\varphi}$ is $1$. The zero order estimate
follows from the De Giorgi-Nash-Moser iteration.
\begin{prop}\label{inte est}(Global boundedness)
If $v$ is a $W^{1,2} $ sub-solution (respectively super-solution) of
\eqref{linear equ ge} in $M$; moreover, if $f\in L^{\frac{p}{2}}$
with $p> 2n$, then there is a constant $p^\ast=\frac{2np}{2n+p}$ and
$C$ depending on the Sobolev constant $C_S$ and the first eigenvalue
$\la_1$ with respect to $\oo_{\vphi}$ such that
\begin{align}\label{glob bound claim}
\sup_M \Big(v-\int_M v\om^n_\vphi\Big) \leq C\| f \|_{p^\ast} \qquad
\Big( \hbox{resp.} \; \sup_M \Big(v-\int_M v\om^n_\vphi \Big) \leq
C\| f \|_{p^\ast}\Big).
 \end{align}
\end{prop}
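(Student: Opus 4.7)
The strategy is the classical De Giorgi-Nash-Moser iteration on $(M,\omega_\varphi)$, driven by the Sobolev inequality of Lemma~\ref{lem:sobolev} and anchored by the Poincar\'e inequality for the initial $L^2$ estimate. First I would pass to the mean-zero part $u:=v-\int_M v\,\omega_\varphi^n$: $u$ inherits the sub-solution (resp.\ super-solution) property, and it suffices to bound $\sup_M u^+$ in the sub-solution case, since the super-solution case follows by applying the sub-solution bound to $-v$.

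For the Moser energy step, for each $\beta\geq 0$ test the weak inequality $\triangle_\varphi u\geq f$ against $u_+^{2\beta+1}\geq 0$. Integration by parts and the identity $|\nabla u_+^{\beta+1}|^2=(\beta+1)^2 u_+^{2\beta}|\nabla u_+|^2$ yield
\begin{equation*}
\int_M\bigl|\nabla(u_+^{\beta+1})\bigr|^2\,\omega_\varphi^n\ \leq\ \frac{(\beta+1)^2}{2\beta+1}\int_M|f|\,u_+^{2\beta+1}\,\omega_\varphi^n.
\end{equation*}
Setting $w:=u_+^{\beta+1}$ and feeding this into Lemma~\ref{lem:sobolev} in real dimension $2n$ (Sobolev exponent $2^*=\tfrac{2n}{n-1}$, constant $C_S$) gives
\begin{equation*}
\|u_+\|_{L^{2^*(\beta+1)}}^{2(\beta+1)}\ \leq\ C(C_S)\Bigl(\tfrac{(\beta+1)^2}{2\beta+1}\!\int_M|f|u_+^{2\beta+1}\omega_\varphi^n+\|u_+\|_{L^{2(\beta+1)}}^{2(\beta+1)}\Bigr).
\end{equation*}
H\"older with exponents $p^*$ and $(p^*)'$ controls the $|f|$-integral by $\|f\|_{L^{p^*}}\,\|u_+\|_{L^{(2\beta+1)(p^*)'}}^{2\beta+1}$, and the key identity $\tfrac{1}{p^*}=\tfrac{1}{p}+\tfrac{1}{2n}$ built into $p^*=\tfrac{2np}{2n+p}$ makes $(2\beta+1)(p^*)'\leq 2^*(\beta+1)$ for $p\geq 2n$ and every $\beta\geq 0$. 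That intermediate norm can then be interpolated between $\|u_+\|_{L^{2(\beta+1)}}$ and $\|u_+\|_{L^{2^*(\beta+1)}}$ and, via Young's inequality, the $2^*(\beta+1)$ piece is absorbed on the left. What results is the standard Moser recursion
\begin{equation*}
\|u_+\|_{L^{\chi q}}\leq C(C_S,\|f\|_{L^{p^*}},\beta)^{1/q}\,\|u_+\|_{L^{q}},\qquad q=2(\beta+1),\quad\chi=\tfrac{n}{n-1},
\end{equation*}
whose geometric product along $q_k=2\chi^k$ converges and therefore delivers $\sup_M u^+\leq C(C_S)\bigl(\|u^+\|_{L^2}+\|f\|_{L^{p^*}}\bigr)$.

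The remaining ingredient is the initial $L^2$-bound. Testing $\triangle_\varphi u\geq f$ against $u$ itself and using Cauchy-Schwarz and the Poincar\'e inequality one gets $\lambda_1\|u\|_{L^2}^2\leq\|\nabla u\|_{L^2}^2\leq\|f\|_{L^2}\|u\|_{L^2}$, hence $\|u\|_{L^2}\leq\lambda_1^{-1}\|f\|_{L^{p^*}}$ on using that the total volume is $1$ and $p^*\geq 2$ for $n\geq 2$ (when $n=1$ one interpolates $L^2$ between $L^1$ and $L^{2^*}$ and absorbs the extra factor). The main obstacle is the H\"older/interpolation bookkeeping in the middle step: $p^*=\tfrac{2np}{2n+p}$ is exactly the threshold at which $(2\beta+1)(p^*)'$ stays below $2^*(\beta+1)$ as $\beta\to\infty$, and the borderline $p=2n$ is precisely where this inequality saturates, matching the remark in the introduction about the analytic singularity at the critical exponent and explaining the hypothesis $p>2n$.
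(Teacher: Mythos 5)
Your argument is correct, but it follows the Moser-iteration branch of the De Giorgi--Nash--Moser circle of ideas, whereas the paper runs the De Giorgi--Stampacchia level-set version. Concretely, the paper tests the equation against the truncations $u=(\tilde v-k)_+$, uses H\"older to get $\|u\|_{2^\ast}\leq C\|f\|_{p^\ast}|A(k)|^{r}$ with $r=\tfrac12-\tfrac1p$ once $|A(k)|$ is small (smallness being arranged exactly as in your last step, via Chebyshev and the Poincar\'e-based bound $\|\tilde v\|_2\leq C\|f\|_{p^\ast}$), and then invokes the iteration lemma of Gilbarg--Trudinger on the decreasing function $k\mapsto|A(k)|$; the hypothesis $p>2n$ enters there as the requirement $r\cdot 2^\ast>1$. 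You instead test against the powers $u_+^{2\beta+1}$ and iterate Lebesgue exponents along $q_k=2\chi^k$, with $p>2n$ entering as the condition that $(2\beta+1)(p^\ast)'$ stays below $2^\ast(\beta+1)$ uniformly in $\beta$ so that the Young absorption constants remain summable --- the same borderline, seen from the other side. Both routes consume the identical inputs (the Sobolev inequality of Lemma~\ref{lem:sobolev}, the Poincar\'e inequality for the initial $L^2$ anchor, unit volume) and produce the same dependence of $C$ on $C_S$ and $\la_1$; the paper's truncation argument is shorter on bookkeeping since it needs no interpolation between $L^{2(\beta+1)}$ and $L^{2^\ast(\beta+1)}$, while yours has the minor advantage of not needing a separate preliminary step to force $|A(k)|^{2/m}\leq\tfrac{1}{2C_3}$. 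One small point to watch in your write-up: the passage from the recursion to $\sup_M u^+\leq C(\|u^+\|_{L^2}+\|f\|_{L^{p^\ast}})$ requires tracking that the per-step constants grow at most polynomially in $\beta$ so the infinite product converges; this is standard but is exactly the bookkeeping your outline defers.
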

\begin{proof}
Let $\tilde v=v-\int_M v\,\om^n_\vphi$. For any constant $k$ we denote by $u=(\tilde
v-k)_+$  the positive part of  $\tilde v-k$.
Set $A(k)=\{x\in M\vert \tilde v(x)>k\}$. Multiplying \eqref{linear
equ ge} with $u$ on both  sides and integrating by parts, we have the inequality
$$\int_M |\Na u|^2\om_\vphi^n\leq -\int_M u f \om_\vphi^n.$$
By the H\"older's inequality, the right hand side is bounded by
\begin{align*}
\|u\|_{2^\ast}\cdot \|f\|_{p^\ast}\cdot |A(k)|^r,
\end{align*} where $\|\cdot\|_p$ denotes the $L^p$ norm with respect to the metric $\oo_{\varphi}$ and $|A(k)|$ denotes the volume of $A(k)$ with respect to $\oo_{\varphi}$.
Here $m=2n$, $2^\ast=\frac{2m}{m-2}$,  $p^\ast=\frac{mp}{m+p}$ and $r=\frac{1}{2}-\frac{1}{p}$.
The Sobolev inequality implies
$$\|u\|^2_{2^\ast}\leq C_1(\|\Na u\|^2_2+\|u\|^2_2).$$
Here all constants $C_i$ in the proof   depend  on
$C_S(\oo_{\varphi}).$ Since $\|u\|_2\leq C_2\|u\|_{2^\ast}
|A(k)|^\frac{1}{m}$, we obtain \beq \|u\|^2_{2^\ast}\leq
C_3(\|u\|_{2^\ast}\cdot\|f\|_{p^\ast}\cdot |A(k)|^r+\|u\|^2_{2^\ast}\cdot
|A(k)|^\frac{2}{m}).\label{eqa2}\eeq We will choose a constant $k_0$ later
such that for any $k\geq k_0$ we have $|A(k)|^\frac{2}{m}\leq
\frac{1}{2C_3}$. Using  the Poincar\'e inequality, we have \beq
\|\tilde v\|_2^2\leq C_4\|\Na \tilde v\|_2^2 .\label{eqa1} \eeq
By multiplying the
equation \eqref{linear equ ge} with $\tilde v$ and applying the
H\"older inequality, we have the inequality   $\|\Na\tilde v\|_2^2\leq \|\tilde
v\|_2\|f\|_2$. Thus we have
 $$\|\tilde v\|_2 \leq C_4\|f\|_2\leq C_5\|f\|_{p^\ast}.$$ Since $k_0^2 |A(k_0)|\leq
\|\tilde v\|_2^2$, we choose $k_0^2=C_5\|f\|_{p^\ast}
(2C_3)^{\frac{m}{2}}$ so that we have  $|A(k_0)|^\frac{2}{m}=\frac{1}{2C_3}.$
Combining this with (\ref{eqa2}), we  obtain for any $k\geq k_0$,
$$\|u\|_{2^\ast}\leq C_6\|f\|_{p^\ast}\cdot |A(k)|^{r}.$$
Now we choose $h>k\geq k_0$ so that we have the inequality
$$\|u\|_{2^\ast}\geq C_7(h-k)\cdot |A(h)|^{\frac{1}{2^\ast}}.$$
Thus by the iteration lemma (see \cite{MR1814364}),
we have $A(k_0+d)=0$ and $d=C_8 \|f\|_{p^\ast}$. Thus, we have
$$\tilde v\leq k_0 + d\leq C_9
\|f\|_{p^\ast}.$$ The proposition is proved.
\end{proof}

\begin{rem}
Since $p^\ast=\frac {2np}{2n+p}$, we have  $n<
p^\ast<\frac{p}{2}$.
\end{rem}

\subsection{Log volume ratio}

\begin{lem}\label{est h}The log volume ratio $h_{\oo}(\varphi):=\log\frac{\om^n_\vphi}{\om^n}$
satisfies the equality \beq |h_{\oo}(\varphi)|\leq
E_{\oo}(\varphi)+ C(n, p,
C_S(\oo_{\varphi}))\Big(\osc_M\varphi+\|S(\oo_{\varphi})\|_{L^p(\oo_{\varphi})}\Big),
\eeq where $p>n.$
\end{lem}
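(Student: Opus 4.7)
The plan is to apply the De Giorgi--Nash--Moser global boundedness Proposition to a suitable auxiliary function built from $h=h_\omega(\varphi)$ and $\varphi$.

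First, recall that $h$ satisfies the Ricci-identity
\[
\Delta_{\omega_\varphi} h \;=\; g_\varphi^{i\bar j}R_{i\bar j}(\omega)-S(\omega_\varphi)
\;=\; \tr_{\omega_\varphi}\Ric(\omega)-S(\omega_\varphi),
\]
as follows from $\Ric(\omega_\varphi)=\Ric(\omega)-\pbp h$ by tracing against $\omega_\varphi$. The obstruction to applying the Moser iteration directly is the presence of the term $\tr_{\omega_\varphi}\Ric(\omega)$, which has no a priori pointwise bound in terms of $\omega_\varphi$. The key observation is that the background metric $\omega$ is fixed, so there exist constants $A,B>0$ (depending only on $\omega$) with $-B\,\omega\le \Ric(\omega)\le A\,\omega$, and that $\Delta_{\omega_\varphi}\varphi=n-\tr_{\omega_\varphi}\omega$. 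Pairing these identities eliminates the unbounded trace term.

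More precisely, the plan is the following. Using $\Ric(\omega)\ge -B\omega$ we obtain
\[
\Delta_{\omega_\varphi}(h-B\varphi)\;\ge\; -Bn-S(\omega_\varphi),
\]
so $v_1:=h-B\varphi$ is a subsolution with right-hand side in $L^{p/2}$ for $p>n$. Proposition (Global boundedness) yields
\[
\sup_M v_1 - \int_M v_1\,\omega_\varphi^n \;\le\; C\bigl(B+\|S(\omega_\varphi)\|_{L^{p^\ast}(\omega_\varphi)}\bigr),
\]
where $C=C(n,p,C_S(\omega_\varphi))$. Rewriting the mean value,
\[
\int_M v_1\,\omega_\varphi^n \;=\; \int_M h\,\omega_\varphi^n \,-\, B\int_M\varphi\,\omega_\varphi^n \;=\; E_\omega(\varphi)-B\int_M\varphi\,\omega_\varphi^n,
\]
and using $\sup h\le \sup v_1+B\sup\varphi$ leads to
\[
\sup_M h \;\le\; E_\omega(\varphi)+B\,\osc_M\varphi+C\bigl(1+\|S(\omega_\varphi)\|_{L^{p^\ast}(\omega_\varphi)}\bigr).
\]
Symmetrically, using $\Ric(\omega)\le A\omega$ gives $\Delta_{\omega_\varphi}(h+A\varphi)\le An-S(\omega_\varphi)$, so applying Proposition to the supersolution $v_2:=h+A\varphi$ produces
\[
\inf_M h \;\ge\; E_\omega(\varphi)-A\,\osc_M\varphi-C\bigl(1+\|S(\omega_\varphi)\|_{L^{p^\ast}(\omega_\varphi)}\bigr).
\]
Combining and using H\"older's inequality (with the volume fixed) to pass from $L^{p^\ast}$ to $L^p$, since $p^\ast<p$, absorbing the background constants $A,B,V$ into $C$, yields
\[
|h_\omega(\varphi)|\;\le\; E_\omega(\varphi)+C(n,p,C_S(\omega_\varphi))\bigl(\osc_M\varphi+\|S(\omega_\varphi)\|_{L^p(\omega_\varphi)}\bigr),
\]
which is the desired inequality. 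Note that $E_\omega(\varphi)\ge 0$ by Jensen's inequality, so the lower bound for $h$ converts to an absolute value bound.

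The main difficulty is exactly the point I flagged: the bad term $\tr_{\omega_\varphi}\Ric(\omega)$ in the equation for $h$ has no a priori bound, but the correct auxiliary combinations $h\pm(\text{const})\varphi$ eliminate it up to the controllable oscillation of $\varphi$. Once this is in place, the Moser iteration of the preceding proposition does the rest, and the cost of the oscillation of $\varphi$ enters the final estimate as required.
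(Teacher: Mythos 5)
Your proposal is correct and follows essentially the same route as the paper: the authors likewise trace $Ric(\om_\vphi)=Ric(\om)-\pbp h$ to get $\tri_\vphi h=\tr_{\om_\vphi}Ric(\om)-S(\om_\vphi)$, form the auxiliary combinations $h-R_-\vphi$ and $h+R_+\vphi$ (your $h-B\varphi$ and $h+A\varphi$) to eliminate the uncontrolled trace term via $\tri_\vphi\vphi=n-\tr_{\om_\vphi}\om$, and then apply the De Giorgi--Nash--Moser global boundedness proposition to the resulting sub/supersolutions. The bookkeeping with the mean value $\int_M h\,\om_\vphi^n=E_\om(\vphi)$ and the oscillation of $\varphi$ matches the paper's argument.
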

\begin{proof}For brevity, we write $h$ for
$h_{\oo}(\varphi)$. We compute the equation of the scalar curvature,
\begin{align*}
\tri_\vphi h= g_\vphi^{i\bar j}R_{i\bar j}(\om)-S(\oo_{\varphi}).
\end{align*}
Since the background metric $\om$ is   smooth, we have   $$\inf_M Ric \cdot \om\leq Ric(\om)\leq \sup_M Ric \cdot \om.$$
Letting $R_-=-\inf_M Ric, R_+=\sup_M Ric$ be two positive constants, we have
\begin{align*}
-R_- \cdot(n-\tri_\vphi\vphi) -S(\oo_\varphi)\leq\tri_\vphi h\leq
R_+\cdot (n-\tri_\vphi\vphi) -S(\oo_\varphi).
\end{align*}
Thus we have the inequality \beqn  \tri_\vphi (h-R_-\cdot
\vphi)&\geq& -nR_- -S(\oo_\varphi), \label{lmaupp}\\
\tri_\vphi (h+R_+\cdot \vphi)&\leq& nR_+ -S(\oo_\varphi).
\label{lmalow} \eeqn By Proposition \ref{inte est} and the
inequalities \eqref{lmaupp}-(\ref{lmalow}),  we have \beqs
h-E_{\oo}(\vphi)&\leq&
C\Big(R_-(\vphi-\int_M\vphi\om_\vphi^n)+\|S(\oo_\varphi)\|_{L^p(\oo_{\varphi})}\Big)\\
&\leq&
C\Big(R_-(\osc_M\vphi)+\|S(\oo_\varphi)\|_{L^p(\oo_{\varphi})}\Big)
\eeqs and
\beqs
h-E_{\oo}(\vphi)&\geq&
-C\Big(R_+(\vphi-\int_M\vphi\om_\vphi^n)+\|S(\oo_\varphi)\|_{L^p(\oo_{\varphi})}\Big)\\
&\geq&
-C\Big(R_+(\osc_M\vphi)+\|S(\oo_\varphi)\|_{L^p(\oo_{\varphi})}\Big).
\eeqs Thus, the lemma is proved. \end{proof}

\begin{rem}The lower bound of $h$ is obtained by Chen-Tian \cite{MR1893004}
 under the Ricci curvature lower bound. If $Ric_{\vphi}$ is bounded from below
 for some constant $L$, i.e.
$$Ric_{\vphi}\geq-\Lambda\om_\vphi,$$
then there exists a constant $C(-\Lambda+\frac{1}{n}\sup_MS(\om))$ such that
\begin{align*}
\inf_M{h_{\oo}(\varphi)}&\geq-4\Big(L\sup_M(\vphi-\int_M\vphi\om^n)+C\Big)
e^{2(1+E_{\oo}(\vphi))}.
\end{align*}

\end{rem}
\subsection{Second order estimate}
The following result is a standard application of Chern-Lu
inequality:
\begin{lem}\label{2ndestimat}There is a constant $C>0$ depending on $\inf_MRic(\oo_{\varphi}),
\sup_{i, j}R_{i\bar ij\bar j}(\oo), \osc_M(\varphi)$ and $\sup_Mh_{\oo}(\varphi)$
such that
$$\frac 1C\oo\leq \oo_{\varphi}\leq C\oo.$$
\end{lem}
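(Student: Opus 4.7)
My plan is to execute the standard Aubin--Yau $C^2$ estimate via the Chern--Lu inequality, in three short steps.

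\emph{Step 1: Chern--Lu differential inequality.} Set $u:=\tr_{\om_\vphi}\om$. Applying the Chern--Lu inequality to the identity map $\mathrm{id}\colon(M,\om_\vphi)\to(M,\om)$, and exploiting the lower Ricci bound $Ric(\om_\vphi)\geq -A_1\om_\vphi$ on the source together with the bisectional-curvature bound $R_{i\bar i j\bar j}(\om)\leq A_2$ on the target, a diagonalization in a frame simultaneously diagonalizing $\om$ and $\om_\vphi$ yields the standard inequality
$$\Delta_\vphi \log u \;\geq\; -A_1-A_2\,u,$$
where $A_1$ depends only on $\inf_M Ric(\om_\vphi)$ and $A_2$ depends only on $\sup_{i,j}R_{i\bar i j\bar j}(\om)$.

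\emph{Step 2: Maximum principle on an auxiliary function.} For a constant $A>A_2$, put $F:=\log u-A\vphi$. Using $\Delta_\vphi\vphi=n-u$, at an interior maximum point $x_0$ of $F$ we have
$$0\;\geq\;\Delta_\vphi F(x_0)\;\geq\;-A_1-A_2\,u(x_0)-A\bigl(n-u(x_0)\bigr),$$
so $(A-A_2)\,u(x_0)\leq A_1+An$ and hence $u(x_0)\leq C_3$. Comparing $F(x_0)$ with $F(x)$ for arbitrary $x\in M$ then yields
$$\log u(x)\;\leq\;\log C_3+A\bigl(\vphi(x)-\vphi(x_0)\bigr)\;\leq\;\log C_3+A\,\osc_M\vphi,$$
so $u=\tr_{\om_\vphi}\om\leq C_4$ on $M$, with $C_4$ depending only on the quantities listed so far and on $\osc_M\vphi$.

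\emph{Step 3: Eigenvalue conversion via Monge--Amp\`ere.} Let $\la_1,\dots,\la_n$ be the eigenvalues of $\om_\vphi$ with respect to $\om$. Step~2 gives $\sum_i 1/\la_i=u\leq C_4$, hence the lower bound $\la_i\geq 1/C_4$ pointwise. Combined with the Monge--Amp\`ere identity $\prod_i\la_i=e^{h_\om(\vphi)}$, this produces
$$\la_i\;=\;\frac{e^{h_\om(\vphi)}}{\prod_{j\neq i}\la_j}\;\leq\;C_4^{\,n-1}\,e^{\sup_M h_\om(\vphi)},$$
and therefore $\tfrac{1}{C}\om\leq\om_\vphi\leq C\om$ with $C$ depending only on $\inf_M Ric(\om_\vphi)$, $\sup_{i,j}R_{i\bar i j\bar j}(\om)$, $\osc_M\vphi$, and $\sup_M h_\om(\vphi)$.

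There is no serious obstacle here; the only point worth flagging is that the Chern--Lu constants $A_1,A_2$ involve only the \emph{lower} Ricci bound of the source $\om_\vphi$ and the bisectional curvature of the \emph{fixed} background $\om$, so no upper Ricci bound on $\om_\vphi$ is consumed---this is exactly the feature that makes the lemma compatible with the hypotheses of \thmref{compactness}.
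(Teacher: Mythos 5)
Your argument is correct and is essentially the paper's own proof: the same Chern--Lu inequality, the same auxiliary function $\log\tr_{\om_\vphi}\om-\la\vphi$ with $\la$ exceeding the bisectional curvature bound, the maximum principle combined with $\osc_M\vphi$ to bound $\tr_{\om_\vphi}\om$, and the Monge--Amp\`ere identity $\om_\vphi^n=e^{h_\om(\vphi)}\om^n$ to convert the lower bound on $\om_\vphi$ into the upper bound. Your closing remark about only the lower Ricci bound of $\om_\vphi$ being used is exactly the point of the lemma in the context of Theorem~\ref{compactness}.
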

\begin{proof} As in
\cite{MR0486659}\cite{MR0250243}, we have the Chern-Lu inequality:
\begin{align*}
\tri_\vphi\log \tr_{\om_\vphi}\om
&=\frac{\tri_\vphi(\tr_{\om_\vphi}\om)}{\tr_{\om_\vphi}\om}-\frac{g_\vphi^{k\bar
l}\cdot \p_k{g}_\vphi^{i\bar j}g_{i\bar j}\cdot\p_{\bar
l}{g}_\vphi^{p\bar q}
g_{p\bar q}}{(\tr_{\om_\vphi}\om)^2}\\
&\geq\frac{R_\vphi^{i\bar j}g_{i\bar j}-g_\vphi^{i\bar
j}g_\vphi^{k\bar l}R_{i\bar jk\bar
l}}{\tr_{\om_\vphi}\om}\\&\geq\inf_M Ric(\oo_{\varphi})-\sup_{i,
j}R_{i\bar ij\bar j}(\oo)\, \tr_{\om_\vphi}\om.
\end{align*}
Let $\la=\sup_MR_{i\bar ij\bar j}(\oo)+1.$ The function
$u:=\log\tr_{\oo_{\varphi}}\oo-\la \varphi$ satisfies the inequality
\beqs \Delta_{\varphi}u&\geq& (\inf_MRic(\oo_{\varphi})-\la
n)+(\la-\sup_MR_{i\bar ij\bar j}(\oo))\tr_{\oo_{\varphi}}\oo\\&\geq
& (\inf_MRic(\oo_{\varphi})-\la n)+\tr_{\oo_{\varphi}}\oo.\eeqs
 By the maximum principle, we have $$\tr_{\om_{\varphi}}\om\leq
C(\inf_M Ric(\oo_{\varphi}), \sup_{i, j}R_{i\bar ij\bar j}(\oo),
\osc_M(\varphi)).$$ This implies that $\oo_{\varphi} \geq \frac
1C\oo$. On the other hand, using the identity
$\oo_{\varphi}^n=e^h\oo^n$ we have
$$\oo_{\varphi}\leq  C'\oo$$
for some positive constant $C'=C'(\inf_M Ric(\oo_{\varphi}),
\sup_{i, j}R_{i\bar ij\bar j}(\oo), \sup_Mh, \osc_M(\varphi)).$ The
lemma is proved.

\end{proof}

\subsection{Zero order estimate}

\begin{lem}\label{lem:zero}For any K\"ahler potential $\varphi\in \cH(M, \oo),$ we have
\beq \osc(\varphi)\leq I(\oo, \oo_{\varphi})+C(C_S(\oo_{\varphi}),
C_S(\oo)). \label{eq403}\eeq
\end{lem}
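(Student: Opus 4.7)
The plan is to bound $\sup_M\varphi$ from above using the background metric $\oo$ and $\sup_M(-\varphi)=-\inf_M\varphi$ from above using $\oo_\varphi$, and then to add the two bounds. The two K\"ahler positivity conditions give the pointwise inequalities
\[
\tri_\oo\varphi \;>\; -n, \qquad \tri_{\oo_\varphi}(-\varphi) \;>\; -n,
\]
where the second comes from rewriting $\oo=\oo_\varphi-\pbp\varphi>0$, i.e.\ $\tr_{\oo_\varphi}\oo>0$. Hence $\varphi$ is a $W^{1,2}$ sub-solution of $\tri_\oo v = -n$ on $(M,\oo)$, and $-\varphi$ is a $W^{1,2}$ sub-solution of $\tri_{\oo_\varphi} v = -n$ on $(M,\oo_\varphi)$.

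Applying the global boundedness estimate of Proposition \ref{inte est} with constant right-hand side $f\equiv -n$ in each of the two geometries yields
\[
\sup_M\varphi - \frac{1}{V}\int_M\varphi\,\oo^n \;\leq\; C_1(C_S(\oo)),
\]
\[
\sup_M(-\varphi) - \frac{1}{V}\int_M(-\varphi)\,\oo_\varphi^n \;\leq\; C_2(C_S(\oo_\varphi)).
\]
Adding these two inequalities and using $\osc(\varphi)=\sup_M\varphi+\sup_M(-\varphi)$ together with the identity $\frac{1}{V}\int_M\varphi(\oo^n-\oo_\varphi^n) = I_\oo(\varphi)$ produces exactly the desired bound \eqref{eq403}.

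The only point requiring care is that the constants $C_i$ should depend solely on the Sobolev constants, whereas the statement of Proposition \ref{inte est} also involves the first eigenvalue through the Poincar\'e step. For the specific sub-solutions $\pm\varphi$ at hand the right-hand side is the universal constant $-n$, so this can be bypassed: one runs the De Giorgi-Nash-Moser iteration directly on $(v-\bar v-k)_+$ using only the Sobolev inequality and the fixed total volume $V$, or equivalently one invokes the classical Green's function lower bound, which on a compact K\"ahler manifold is controlled by the Sobolev constant and $V$ alone. Either route gives the stated dependence and completes the proof. The expected main obstacle is precisely this last book-keeping step, ensuring that the constants can be written purely in terms of $C_S(\oo)$ and $C_S(\oo_\varphi)$; the geometric heart of the lemma is simply the observation that the two one-sided Laplacian bounds $\tri_\oo\varphi>-n$ and $\tri_{\oo_\varphi}(-\varphi)>-n$ combine into the oscillation estimate via the $I$-functional.
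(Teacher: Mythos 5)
Your proposal is correct and follows essentially the same route as the paper: the two positivity conditions $n+\tri_\oo\varphi>0$ and $n-\tri_{\oo_\varphi}\varphi>0$ make $\varphi$ and $-\varphi$ sub-solutions with respect to $\oo$ and $\oo_\varphi$ respectively, Proposition \ref{inte est} bounds $\sup_M\varphi-\int_M\varphi\,\oo^n$ and $-\inf_M\varphi+\int_M\varphi\,\oo_\varphi^n$, and adding gives \eqref{eq403} via the definition of $I$. Your closing remark on making the constant depend only on the Sobolev constants (rather than also on the first eigenvalue appearing in Proposition \ref{inte est}) is a legitimate refinement of a point the paper passes over silently, but it does not alter the argument.
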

\begin{proof}
Since $ n+\tri\vphi>0$, by Proposition \ref{inte est} we have the
upper bound of $\vphi$,
\begin{align}\label{vphiupp}
\sup_M\vphi-\int_M\vphi\,\om^n\leq C(C_S(\oo)).
\end{align} On the other hand, using the inequality $n-\tri_\vphi\vphi>0$
 we have the lower bound
\begin{align}\label{vphilow}
\inf_M\vphi-\int_M\vphi\,\om_\vphi^n\geq -C(\oo_{\varphi}).
\end{align}
The inequality (\ref{eq403}) follows directly from   \eqref{vphiupp} and \eqref{vphilow}.

\end{proof}

Combining Lemma \ref{lem:zero} with Lemma \ref{lem:proper} ( or
Lemma \ref{lem:proper2}), we have

\begin{lem}\label{lem:C0}
Suppose that the (modified) $K$-energy is $I$-proper.
If the (modified) $K$-energy $\nu_{\oo}(\varphi)$(or $\td
\nu_{\oo}(\varphi)$) and $C_s(\oo_{\varphi})$ are bounded, then
$\osc(\varphi)$ and $E_{\oo}(\varphi)$ are bounded.
\end{lem}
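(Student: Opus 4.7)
The plan is to chain the preceding lemmas in a forward manner, going through $I_\om(\varphi)$, then $\osc(\varphi)$, then $E_\om(\varphi)$. First I would extract an upper bound on $I_\om(\varphi)$ from the $I$-properness of the $K$-energy: by definition there is a non-decreasing, unbounded function $\rho$ with $\nu_\om(\varphi)\geq \rho(I_\om(\varphi))$, and since $I_\om(\varphi)\geq 0$ by the explicit expression in Section~\ref{Energy functional}, the hypothesized upper bound on $\nu_\om(\varphi)$ forces $I_\om(\varphi)$ to be bounded above.

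Next I would apply Lemma~\ref{lem:zero}, which yields $\osc(\varphi)\leq I_\om(\varphi)+C(C_S(\om_\varphi),C_S(\om))$. Combined with the bound on $I_\om(\varphi)$ just obtained and the hypothesized bound on the Sobolev constant, this gives a uniform bound on $\osc(\varphi)$. In particular, $\varphi$ lies in the set $\cH_{C_0}$ defined by \eqref{eq:C} for a uniform constant $C_0$ depending only on the bounds on $\nu_\om$, $C_S(\om_\varphi)$, and $C_S(\om)$.

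Feeding this into Lemma~\ref{lem:proper}, $I$-properness of the $K$-energy on $\cH_{C_0}$ upgrades to $E$-properness on $\cH_{C_0}$: there is another non-decreasing, unbounded function $\tilde\rho$ with $\nu_\om(\varphi)\geq \tilde\rho(E_\om(\varphi))$. The assumed upper bound on $\nu_\om(\varphi)$ then delivers an upper bound on $E_\om(\varphi)$, while the lower bound of $E_\om(\varphi)$ is automatic from the pointwise estimate $x\log x\geq -e^{-1}$ applied to $\om_\varphi^n/\om^n$, already recorded in Section~\ref{Energy functional}. The modified $K$-energy case is handled identically, by substituting Lemma~\ref{lem:proper2} for Lemma~\ref{lem:proper} and restricting throughout to $K$-invariant potentials.

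The argument is essentially bookkeeping given the machinery already developed in this section; there is no substantive analytic step beyond those lemmas. The only point requiring care is to verify, at the transition from Step~2 to Step~3, that the constant $C_0$ in the inclusion $\varphi\in\cH_{C_0}$ is genuinely uniform (depending only on the hypothesized bounds), so that Lemma~\ref{lem:proper} may be invoked with a single background constant rather than one depending on $\varphi$.
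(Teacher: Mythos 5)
Your proof is correct and takes exactly the route the paper intends: the paper states Lemma~\ref{lem:C0} as a direct consequence of ``combining Lemma~\ref{lem:zero} with Lemma~\ref{lem:proper} (or Lemma~\ref{lem:proper2})'', which is precisely your chain ($I$-properness plus the bound on $\nu_{\oo}$ gives a bound on $I_{\oo}(\varphi)$, Lemma~\ref{lem:zero} with the Sobolev bound then controls $\osc(\varphi)$ and places $\varphi$ in $\cH_{C_0}$, and Lemma~\ref{lem:proper} converts $I$-properness into the upper bound on $E_{\oo}(\varphi)$, the lower bound being automatic). Your remark on the uniformity of $C_0$ is a point the paper leaves implicit but is exactly the right thing to check.
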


\subsection{Proof of Theorem \ref{compactness}}
\begin{proof}[Proof of Theorem \ref{compactness}] We consider the equation
\begin{align}\label{volume ratio equation}
\oo_{\varphi}^n=e^{h_{\oo}(\varphi)}\oo^n.
\end{align}
Under the assumptions of Theorem \ref{compactness}, by Lemma
\ref{lem:C0} $\osc(\varphi)$ and $E_{\oo}(\varphi)$ are bounded and
by Lemma \ref{est h} $h_{\oo}(\varphi)$ is bounded.  So $\vphi$ has
$C^\alpha$ bound by Kolodziej \cite{MR2425147}. Moreover by Lemma
\ref{2ndestimat}, we obtain the second order estimates. Now since the
metric $\om_\vphi$ and $\om$ are equivalent, we apply weak Harnack
inequality and the local boundedness estimate to \eqref{lmaupp} and
\eqref{lmalow} respectively (see Section 5.2 in Calamai-Zheng
\cite{Calamai:2012uq}), we obtain that $h$ has $C^\alpha$ bound. So
the $C^{2,\alpha}$ estimate of $\vphi$ follows from the Schauder
estimate of complex Monge-Amp\'ere equation (see Wang
\cite{MR3008426}). Now we run the bootstrap argument.
 Considering \eqref{volume ratio equation}, the right hand side
 belong to $L^p$ for some $p\geq n$. Since the coefficient is $C^\alpha$,
 we could apply the $L^p$ theory and obtain that $h$ belongs to $W^{2,p}$.
  Consequently, by the Sobolev imbedding theorem $h$ is $C^{1,\alpha}$,
  then $\vphi$ is $C^{3,\alpha}$ by linearizing \eqref{volume ratio equation}
  and applying the Schauder estimate of the Laplacian equation. The theorem is proved.

\end{proof}
The following version of \thmref{compactness} is used to the modified Calabi flow in the remainder sections.
\begin{theo}\label{compactness extremal}Let $(M, \Om)$ be
a compact K\"ahler manifold for which the modified $K$-energy is
$I$-proper in the K\"ahler class $\Om$. If $\cS$ is
the  set of $K$-invariant K\"ahler metrics in $\Om$ satisfying the following
properties:
\begin{itemize}
\item the modified $K$-energy is bounded;
\item the $L^p$-norm of the scalar curvature is bounded for some $p>n$;
\item the Sobolev constant is bounded;
\item the Ricci curvature is bounded from below;
\end{itemize}
then $\cS$ is compact in $C^{1,\alpha}$-topology of the space of the
K\"ahler metrics for some $\alpha\in(0,1)$. In particular, K\"ahler collapsing does not
occur.\\
\end{theo}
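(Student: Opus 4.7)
The plan is to follow the proof of Theorem \ref{compactness} essentially verbatim, replacing the $K$-energy by the modified $K$-energy and restricting the entire argument to the $K$-invariant slice. The observation that makes this adaptation work is already packaged in Lemma \ref{lem:proper2}, which establishes the equivalence of $E$-properness and $I$-properness for the modified $K$-energy on $\cH^K_C$, playing exactly the role of Lemma \ref{lem:proper} in the non-modified setting.

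First, I would combine the boundedness of the modified $K$-energy with the Sobolev constant bound and Lemma \ref{lem:C0} (whose statement already covers the modified case) to obtain uniform bounds on $\osc(\varphi)$ and on the entropy $E_\omega(\varphi)$. Together with the $L^p$ bound on the scalar curvature for some $p>n$, Lemma \ref{est h} then yields a uniform $L^\infty$ bound on the log volume ratio $h_\omega(\varphi)=\log(\omega_\varphi^n/\omega^n)$. At this point Kolodziej's $C^\alpha$ estimate for the complex Monge-Amp\`ere equation produces a uniform H\"older bound on $\varphi$, and Lemma \ref{2ndestimat} (Chern-Lu, using the lower Ricci bound) gives the uniform second-order estimate $C^{-1}\omega\le\omega_\varphi\le C\omega$.

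Once the metrics are uniformly equivalent, the next step is to feed the scalar-curvature equation $\Delta_\varphi h=g_\varphi^{i\bar j}R_{i\bar j}(\omega)-S(\omega_\varphi)$ into the weak Harnack and local boundedness machinery (exactly as in the proof of Theorem \ref{compactness}) to upgrade $h$ to $C^\alpha$. The Schauder estimate for the complex Monge-Amp\`ere equation then pushes $\varphi$ up to $C^{2,\alpha}$, and the standard bootstrap, using $L^p$ theory on the linearized Monge-Amp\`ere equation together with Sobolev embedding, delivers the $C^{3,\alpha}$ bound needed for $C^{1,\alpha}$-compactness of the K\"ahler metrics.

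The only point that distinguishes this setting from the non-modified case is the need to preserve $K$-invariance throughout. Since both the complex Monge-Amp\`ere equation and its linearization commute with the isometric $K$-action, and the metrics in $\cS$ are $K$-invariant by hypothesis, $K$-invariance is inherited by any limit, so the compactness indeed takes place inside the $K$-invariant subspace. I do not expect a genuine obstacle here: the substantive analytic input has already been isolated in Lemma \ref{lem:proper2} and Lemma \ref{lem:C0}, and what remains is the bootstrap above, run with one lemma substituted for the other.
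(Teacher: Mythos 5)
Your proposal is correct and follows exactly the route the paper intends: the paper states Theorem \ref{compactness extremal} without a separate proof, as the modified-$K$-energy analogue of Theorem \ref{compactness}, and the substitution of Lemma \ref{lem:proper2} for Lemma \ref{lem:proper} together with the already-modified statement of Lemma \ref{lem:C0} is precisely the intended adaptation. The remaining chain (Lemma \ref{est h}, Kolodziej, Chern--Lu, weak Harnack, Schauder, bootstrap) carries over unchanged, and your remark on preservation of $K$-invariance in the limit is a correct, if minor, addition.
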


\section{Calabi flow}\label{Calabi flow}\label{Sec:Calabiflow}
In this section, we will recall some basic facts of Calabi flow and modified Calabi flow, which will be used in next sections. Let $(M, \oo)$ be a compact K\"ahler manifold with a K\"ahler metric $\oo$. A family of K\"ahler potentials $\varphi(t)\in \cH(M, \oo)(t\in [0, T])$ is called a solution of Calabi flow, if it satisfies the equation
$$\pd {}t\varphi(t)=S(\varphi(t))-\un S,$$
where $S(\varphi(t))$ is the scalar curvature of the metric $\oo_{\varphi(t)}.$ In the level of K\"ahler metrics, the equation of Calabi flow can be written as
$$\pd {}tg_{i\bar j}=S_{i\bar j}, $$
where $g_{i\bar j}$ denotes the metric tensor of the K\"ahler form $\oo_{\varphi(t)}.$
Along the Calabi flow,  the evolution equation of the curvature tensor is
given by
\begin{align}\label{evorm}
\frac{\partial}{\partial t} Rm=-\Na\bar \Na\Na\bar \Na S
=-\Delta^2Rm+\Na^2Rm*Rm+\Na Rm*\Na Rm,
\end{align} where the operator $*$ denotes some contractions of
tensors.

Next, we introduce the modified Calabi flow.
Let $\Aut(M)$ be the holomorphic group and $\Aut_0(M)$  the
identity component
 of the holomorphic group. According to Fujiki \cite{MR0481142}, $\Aut_0(M)$
 has a unique meromorphic subgroup
$G$ which is a linear algebraic group and the quotient $\Aut_0(M)
/G$ is a complex torus. Furthermore, the Chevalley
 decomposition shows that $G$ is the semidirect product of
 reductive subgroup $H$ and the unipotent radical $R_u$. Moreover,
  $H$ is a complexification of a maximal compact subgroup $K$ of $G/ R_u$.
  Let $\frak h_0(M)$, $\mathfrak g$ and $\mathfrak h$ be
  the Lie algebra of $\Aut_0(M)$, $G$ and $H$ respectively.

Let $\mathcal M_{{\inv}}$ contains the metric in K\"ahler class
$\Omega$ whose isometric group is a maximal compact subgroup $K$ of
$\Aut_0(M)$. We call these metrics the $K$-invariant K\"ahler
metrics. Each  metric $\omega\in \mathcal M_{\inv}$ defines a
holomorphic vector field
 $$V_\omega=g^{i\bar j}\frac{\partial (\pr S(\omega))}{\partial z^i} \frac{\partial}{\partial z^{\bar j}},$$
where $\pr$ is the projection from the space of complex valued
functions to the space of the Hamiltonian functions of the
holomorphic vector fields. Due to Calabi \cite{MR780039} we have the isometric group
of an extremal metric is a maximal compact subgroup of $\Aut_0(M)$
and the associated $V$ is a holomorphic
vector field. Hence, all extremal metrics stay in $\mathcal M_{\inv}$.
 Futaki-Mabuchi \cite{MR1270938} showed that $\Re V$ lies in the center
 of $\mathfrak h$ and is independent of the choice
 of $\omega\in \mathcal M_{\inv}$. Moreover, for any $\omega_1,\omega_2$,
  there exists a element $\sigma$ in the unipotent
  radical $R_u$ such that $\sigma_\ast V_{\omega_1}= V_{\omega_2}$.
Let $K$ be a maximal compact subgroup which contains
isometric group of $\omega$ generated by $\Im V$.
If $\omega$ is a $K$-invariant metric, then there is a real-valued
Hamiltonian function $\theta_{\Re V}$ such that
\begin{align*}
L_{\Re V}\omega=\sqrt{-1}\partial\bar\partial\theta_{\Re V}.
\end{align*}
Let $\mathcal H_K(M, \oo)$ be the space of $K$-invariant K\"ahler potentials
\begin{align}
\mathcal H_K(M, \oo)
&=\{\varphi\in C_R^{\infty}(M)\,\vert\,\omega
+\sqrt{-1}\partial\bar\partial\varphi>0,
L_{\Im V}(\omega_\varphi)=0\}.
\end{align}
If  $\sigma(t)$ is  the holomorphic group
generated by the real part of $V$. i.e. $\Re V
=\sigma^{-1}_\ast(\frac{\partial}{\partial t}\sigma),$ then there is a smooth function
$\rho(t)\in \cH(M, \oo)$ such that  $$\sigma(t)^{\ast}\omega
=\omega+\sqrt{-1}\partial\bar\partial\rho(t).$$
Let $\varphi(t)$ be the solution of Calabi flow and $\psi(t)=\sigma(-t)^\ast(\varphi(t)-\rho(t))$. Then $\sigma(t)^\ast\om_\psi=\om_\vphi$. When the initial K\"ahler potential is $K$-invariant, the solution $\varphi(t)$ remains to be $K$-invariant.
Moreover, we have
\begin{align*}
L_{\Re V}\omega_t
=\sqrt{-1}\partial\bar\partial\theta_{\Re V}(t)
=\sqrt{-1}\partial\bar\partial(\theta_{\Re V}+{\Re V}(\psi))
\end{align*}

The function $\psi(t)$ is called a solution of modified Calabi flow, which is given by
the equation
\begin{align}\label{mcf}
\frac{\partial \psi}{\partial t}
=S(\oo_\psi)-\underline S-\theta_{\Re V}(\psi).
\end{align}
The proof of (\ref{mcf}) can be found in  Huang-Zheng
\cite{MR2953047}.  We call $S-\underline{S}-\theta_{\Re V}$ the modified scalar
curvature of $\omega$. The modified Calabi-Futaki invariant is defined in $\mathcal H(M, \oo)$
\begin{align*}
\td F(Y) =-\int_M\theta_{X} (S-\underline{S}-\theta_{V})\,\omega^n.
\end{align*}
It equals to zero when $M$ admits extremal metric $\omega$
and we have $\underline{S}+\theta_{\Re V}=S$. Particularly, if it is restricted in $\mathcal H_K(M, \oo)$, the modified Calabi-Futaki invariant becomes
\begin{align}\label{mkfutaki}
\td F(Y) =-\int_M\theta_{X} (S-\underline{S}-\theta_{\Re
V})\omega^n.
\end{align}
The modified $K$-energy is well-defined for $K$-invariant K\"ahler metrics
\begin{align}\label{mkenergy}
\td \nu(\varphi)& =-\int_0^1\int_M\dot\varphi
(S-\underline{S}-\theta_{\Re V}(\varphi))\,\omega_\varphi^n\,dt.
\end{align}
We can define the modified Calabi energy  by
\begin{align}\label{mcalabie}
\td Ca(\omega) =\int_M(S-\underline{S} -\theta_{\Re V})^2\omega^n.
\end{align}
We denote the
operator  $L$   by
 $$L(u):=u_{,\bar i\bar jji}=\Delta^2u+R_{i\bar j}u_{j\bar i}+S_{, i}u_{\bar i}.$$
 Then the evolution of the modified Calabi energy can be written by
 \begin{align}\label{epo}
\partial_t\int_M\dot\psi^2\omega_\psi^n
=-2\int_M\dot\psi L\dot\psi\omega_\psi^n,
\end{align} and the proof of (\ref{epo}) can be found in Section 3 in Huang-Zheng \cite{MR2953047}.

\section{Estimates along Calabi flow}\label{Eigenvalues estimates}

\subsection{Short time estimates}
 The Calabi flow is a fourth
order quasi-linear parabolic equation of K\"ahler potentials. When the initial data is a smooth K\"ahler potential, the short time existence of the Calabi flow follows from the theory of the quasi-linear parabolic equation. However, the proof of local existence with the H\"older continuous initial
   data is quite different from the case of smooth initial data.
   Da Prato-Grisvard \cite{MR551075}, Angenent \cite{MR1059647} and others developed the abstract theory of local
     existence for the fully nonlinear parabolic equation. They  \cite{MR551075}
     constructed the continuous interpolation spaces so that
     the linearized operator stays in certain classes.
     This theory has been applied to prove the well-posedness
     of the Calabi flow with $C^{3,\alpha}$ initial potentials
     and the pseudo-Calabi flow with $C^{2,\alpha}$ initial potentials
     (see Theorem 3.2 in Chen-He \cite{MR2405167} and Theorem 4.1 and
     Proposition 4.42 in Chen-Zheng \cite{MR3010550}.  We accumulate
     the results as following.
\begin{prop}\label{short time}Suppose that the initial K\"ahler potential $\varphi(0)$
satisfies that for  integer $l\geq 3$ and positive constant
$\lambda$,
$$\omega_{\varphi(0)}\geq\lambda\omega,\quad |\varphi(0)|_{C^{l, \alpha}}\leq A,
$$ then the Cauchy problem for the Calabi flow
has a unique solution within the maximal existence time $T$.
Moreover, there exists a time $t_0=t_0(\lambda, A, n, \omega)$ such
that for any $t\in [0, t_0]$ we have
$$ \omega_t \geq \frac{\lambda}{2}\omega,\quad |\varphi(t)|_{C^{l, \alpha}}\leq 2A.$$
For any $t_0> \epsilon>0$ and any $k\geq l+1$, there also exists
a constant $C(\lambda, A, k, \epsilon, n, \omega)$, such that
$$ |\varphi(t)|_{C^{k, \alpha}}\leq C,\quad  \forall \;t\in [\epsilon, t_0].$$
\end{prop}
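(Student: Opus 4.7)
The plan is to view the Calabi flow as a fourth-order quasi-linear parabolic equation and apply abstract semigroup theory in suitable H\"older spaces. In local coordinates $\partial_t \varphi = S(\varphi) - \underline{S}$ takes the form $\partial_t \varphi = -g_\varphi^{i\bar j} g_\varphi^{k\bar l} \varphi_{,i\bar j k\bar l} + \text{l.o.t.}$, so the linearization at the initial potential has principal part $-\Delta_{\varphi(0)}^2$, a strongly elliptic fourth-order operator. Hence the Cauchy problem is parabolic. For smooth initial data, standard parabolic theory immediately yields a smooth solution on some short time interval; the issue here is the limited regularity $C^{l,\alpha}$ of the initial datum.

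To handle the low-regularity case, I would work in the \emph{little} H\"older spaces $h^{l,\alpha}$, the completion of smooth functions in the $C^{l,\alpha}$-norm. In this setting the Da Prato-Grisvard theory of continuous interpolation spaces shows that the linearized operator generates an analytic semigroup, and Angenent's abstract short-time existence theorem for fully nonlinear parabolic equations applies. Concretely, one rewrites the flow as $\partial_t \varphi + \mathcal L_{\varphi(0)} \varphi = F(\varphi)$ for a suitable splitting into principal linear and nonlinear remainder terms, and runs a contraction mapping argument on a ball in $C([0,t_0]; h^{l,\alpha})$. This is precisely the approach executed in Theorem~3.2 of Chen-He \cite{MR2405167} for the $C^{3,\alpha}$ case and in Theorem~4.1 and Proposition~4.42 of Chen-Zheng \cite{MR3010550} for the related pseudo-Calabi flow; the argument transfers verbatim to any $l \geq 3$.

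Once a unique local solution $\varphi \in C([0,T); h^{l,\alpha})$ is produced, the quantitative bounds on a subinterval $[0,t_0]$ follow from openness of the conditions $\omega_{\varphi} \geq \lambda \omega$ and $|\varphi|_{C^{l,\alpha}} \leq A$: continuity in time with values in $h^{l,\alpha}$, together with the fact that the constants in the contraction argument depend only on $\lambda$, $A$, $n$, and $\omega$, yields a uniform $t_0 = t_0(\lambda, A, n, \omega)$ on which $\omega_t \geq \frac{\lambda}{2}\omega$ and $|\varphi(t)|_{C^{l,\alpha}} \leq 2A$. The higher regularity on $[\epsilon, t_0]$ is then obtained by a standard parabolic bootstrap: differentiating the equation in space and applying interior Schauder estimates for fourth-order linear parabolic equations (whose coefficients are controlled by the $C^{l,\alpha}$ bound just secured), one improves regularity by one derivative at each step, with the constants depending on $\epsilon$ through the usual localization in time.

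I expect the principal obstacle to be the first step, namely establishing local well-posedness exactly at the threshold regularity $C^{l,\alpha}$ rather than for smooth data. The delicate point is that classical $C^{l,\alpha}$ parabolic Schauder theory imposes compatibility conditions at $t=0$ that our initial potential need not satisfy, which is why the continuous interpolation / little-H\"older framework is essential. The remaining steps---the openness of the quantitative bounds and the Schauder bootstrap to higher regularity---are essentially mechanical once local existence and the analyticity of the semigroup are in hand.
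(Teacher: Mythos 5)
Your proposal follows exactly the route the paper takes: the paper gives no independent proof of this proposition but simply accumulates the well-posedness results of Chen--He (Theorem 3.2) and Chen--Zheng (Theorem 4.1 and Proposition 4.42), which are based on the Da Prato--Grisvard/Angenent theory of continuous interpolation (little H\"older) spaces, with the quantitative bounds and higher regularity on $[\epsilon,t_0]$ coming from the same parabolic smoothing mechanism you describe. Your write-up is a correct and somewhat more detailed account of the same argument.
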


\subsection{The first eigenvalue estimate along the Calabi flow}

In this subsection, we will estimate the first eigenvalue of the
operator  $L_t$ along the Calabi flow when $M$ has no non-zero holomorphic vector fields.  Along the Ricci flow, the
study of the eigenvalues of Laplacian and some other operators has been carried out by
Cao \cite{MR2262792}, Li \cite{MR2317755}, Perelman
\cite{Perelman:zr}, Wu-Wang-Zheng \cite{Wu:2009fk} etc.

\begin{prop}\label{lem:eigen2}Let $\mu_1(t)$ be the first eigenvalue of the
$L$ operator along the Calabi flow. For any constants $\La,  \dd>0$
and positive function $\ee(t)$, if the solution of the Calabi flow
satisfies
 \beq |Ric|(t)\leq \La,
\quad |\Na\bar\Na S|(t)\leq \ee(t),\quad
\forall\, t\in [0, T]\eeq
then we have
$$(\mu_1(t)+\La^2)\geq (\mu_1(0)+\La^2)e^{-26\int_0^t\,\ee(t)\,dt},\quad \forall\,t\in [0, T].$$
\end{prop}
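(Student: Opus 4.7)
The plan is to differentiate $\mu_1(t)$ along the Calabi flow, derive a pointwise differential inequality of the form $|\mu_1'(t)| \leq 26\,\ee(t)(\mu_1(t)+\La^2)$, and apply Gronwall to $\log(\mu_1+\La^2)$. First I would fix a (locally) smooth family $f_t$ of normalized first eigenfunctions, $L_t f_t = \mu_1(t) f_t$, with $\int_M f_t^2\,\oo_t^n = 1$ and $\int_M f_t\,\oo_t^n = 0$. Since $L_t = \cD_t^*\cD_t$ (where $\cD_t$ is the Lichnerowicz operator) is self-adjoint and non-negative on mean-zero functions, the standard eigenvalue perturbation computation together with the two normalization conditions (which cancel the $\partial_t f_t$ contributions) yields
\begin{equation*}
\frac{d}{dt}\mu_1(t) = \int_M f_t\,(\partial_t L_t)(f_t)\,\oo_t^n.
\end{equation*}
If at isolated $t$ the eigenvalue fails to be differentiable because of multiplicity crossings, I would replace this by the appropriate Dini derivative; the same identity still suffices for Gronwall.

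Next I would expand $\partial_t L_t$ using the Calabi flow equation $\partial_t g_{i\bar j} = S_{i\bar j}$. Each of the three pieces of $Lu = \DD^2 u + R_{i\bar j} u_{j\bar i} + S_{,i} u_{\bar i}$ contributes: variation of $g^{i\bar j}$ produces $-S^{i\bar j}$; variation of Christoffel symbols produces $\Na S$; variation of $R_{i\bar j}$ along the Calabi flow produces $\Na\bar\Na S$ paired with curvature. Schematically, every term of $(\partial_t L_t)f$ carries at least one factor of $S_{i\bar j}$, $\Na S_{i\bar j}$ or $\Na\bar\Na S_{i\bar j}$, hence is pointwise controlled by $\ee(t)$ and $\La$. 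I would then integrate $\int_M f_t (\partial_t L_t)(f_t)\,\oo_t^n$ by parts so that all four covariant derivatives in the leading piece of $\partial_t(\DD^2)$ are redistributed between the two copies of $f_t$, leaving $\Na\bar\Na S$ as the only coefficient still carrying derivatives of $S$. The resulting integrals are combinations of $\int |\cD f_t|^2\,\oo_t^n = \mu_1$ (from pairing $Lf_t=\mu_1 f_t$ against $f_t$), $\int f_t^2\,\oo_t^n = 1$, and $\int |\Na f_t|^2\,\oo_t^n$. The last of these can be bounded by $C(\mu_1+\La^2)$ using the Bochner identity
\begin{equation*}
\int_M |\cD f|^2\,\oo^n + \int_M R_{i\bar j} f^{,i} f^{,\bar j}\,\oo^n = \int_M |\Na\bar\Na f|^2\,\oo^n
\end{equation*}
combined with $|Ric|\leq \La$ and Cauchy--Schwarz. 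Summing the contributions with careful tracking of numerical constants produces the inequality with the specific factor $26$, and Gronwall gives the conclusion.

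The chief obstacle is this last step: making the integration-by-parts bookkeeping explicit. In $(\partial_t L_t) f_t$ one initially sees derivatives of $S$ up to order four (from varying $\DD^2 f$ in the presence of changing Christoffel symbols). Every such dangerous term must be integrated by parts back onto $f_t$ until the coefficient reduces to $\Na\bar\Na S$ (bounded by $\ee(t)$); the surviving derivatives on $f_t$ must then be absorbed using $Lf_t=\mu_1 f_t$ and $|Ric|\leq \La$. Carrying this out while verifying that all commutator terms produce factors controlled by $\La$ rather than higher powers of the curvature, and without losing the sharp constant $26$, is the main technical difficulty.
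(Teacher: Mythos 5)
Your proposal follows essentially the same route as the paper: both compute $\frac{d}{dt}\mu=\int_M f(\partial_t L_t)f\,\oo_t^n$ for a normalized first eigenfunction, integrate by parts until every coefficient built from derivatives of $S$ reduces to $\nabla\bar\nabla S$ (hence is bounded by $\ee(t)$), control the surviving third-order derivative norms of $f$ via the eigenvalue equation, the Ricci identity and $|Ric|\leq\La$, and conclude with a Gronwall argument for $\log(\mu_1+\La^2)$. The only cosmetic difference is that the paper sidesteps the eigenvalue-differentiability issue with the auxiliary functional $\mu(f,t)\geq\mu_1(t)$, differentiated at the touching time $t_0$, rather than your Dini-derivative device.
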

\begin{proof}For any $t\in [0, T]$, consider the smooth functions with the normalization condition
$
\int_M\;f(x, t)^2\,\oo_t^n=1.
$
We define the nonnegative auxiliary function
$$\mu(f, t)=\int_M\; f(x, t)L_tf(x, t)\,\oo_t^n=\|\nabla\nabla f\|_2^2.$$
It is easy to see that $\mu(f, t)\geq \mu_1(t)$ for all $t\in[0,T]$ and the equality holds if and only if $f$ is the eigenfunction of the first eigenvalue.
We calculate the derivative at $t$.
\beqn \frac d{dt}\mu(f,
t)
&=&2\int_M\frac{d}{dt}f L_tf \omega_t^n
+\int_M\,f\Big(\pd{}tL_t\Big)f\,\oo_t^n+
\int_Mf L_tf \Delta_t S\omega_t^n\nonumber.\label{eq001}\eeqn

At $t=t_0$, let $f(x, t_0)$ be the $i$-th eigenfunction of $L_{t_0}$ with
respect to the eigenvalue $\mu(f,t_0)$ satisfying
\begin{align}\label{eigenLin}
L_{t_0}f(x,
t_0)=\mu(f,t_0)f(x, t_0).
\end{align} Using the normalization condition, we have
\beqn \frac d{dt}\mu(f,
t_0)
&=&\int_M\,f\Big(\pd{}tL_t\Big)f\,\oo_t^n\nonumber\\
&=&\int_M\,f\Big(-S_{, m\bar n}f_{\bar m\bar kkn}-(f_{\bar m}S_{,
m\bar k})_{\bar llk}-(f_{, \bar m\bar n}S_{, m\bar pp})_{, n}\Big)\,\oo_t^n\nonumber\\
&:=&(I_1+I_2+I_3). \label{eq001}\eeqn

\begin{lem}
The following inequality holds.
\begin{align}\label{eq004}
\int_M\;|f_{\bar m\bar kk}|^2\leq(2\mu+\La^2)\int_M\;|\Na f|^2.
\end{align}
\end{lem}
\begin{proof}
We have by the Ricci identity,
\beqn \int_M\;|f_{\bar m\bar kk}|^2&=&\int_M\;f_{\bar m\bar
kk}(f_{l\bar
lm}+R_{m\bar n}f_n)\nonumber\\
&=&\int_M\;-f_{\bar m\bar kkm}f_{l\bar l}+f_{\bar m\bar kk} R_{m\bar
n}f_n\nonumber\\
&=&\int_M\;-\mu\cdot f\Delta_t f+f_{\bar m\bar kk} R_{m\bar
n}f_n\nonumber\\&\leq&\int_M\;\mu|\Na f|^2+\frac 12\int_M\;|f_{\bar m\bar
kk}|^2+\frac {\La^2}2\int_M\;|\Na f|^2.\nonumber
\eeqn
Thus, we have
\beqs
\int_M\;|f_{\bar m\bar kk}|^2&\leq&(2\mu+\La^2)\int_M\;|\Na f|^2.
\eeqs

\end{proof}
\begin{lem}
The following inequality holds.
\begin{align}\label{eq0044}
\int_M\;|f_{\bar m\bar nm}|^2
\leq(2\mu+\La^2)\int_M\;|\Na f|^2.
\end{align}
\end{lem}
\begin{proof}
We have by the Ricci identity,
\beqn \int_M\;|f_{\bar m\bar nm}|^2
&=&\int_M\;f_{\bar m\bar nm}((\Delta f)_{n}+R_{n\bar p}f_p)\nonumber\\
&=&\int_M\;-f_{\bar m\bar nm n}\Delta f+f_{\bar m\bar nm} R_{n\bar p}f_p\nonumber\\
&\leq&-\int_M\;\mu \cdot f\Delta_t f+\frac 12\int_M\;|f_{\bar m\bar nm}|^2
+\frac {\La^2}{2}\int_M\;|\Na f|^2\no\\
&=&(\mu+\frac {\La^2}{2})\int_M\; |\Na f|^2+\frac 12\int_M\;|f_{\bar m\bar
nm}|^2.\nonumber
\eeqn
Thus, (\ref{eq0044}) is proved.

\end{proof}
\begin{lem}The   following inequalities hold:
\beqs
\|\Na f\|^2_2&<& 4(\mu^{\frac 12}+\La),\\
\max\{\|f_{\bar m\bar
kk}\|_2^2, \|f_{\bar m\bar nm}\|_2^2\}&<&8(\mu^{\frac 12}+\La)^3.
\eeqs where $C$ is a universal constant.
\end{lem}
\begin{proof}
Applying the Cauchy-Schwarz inequality, we obtain
\beqn
\mu(f, t)&=&\int_M\;(\Delta f)^2+R_{i\bar j}f_{j\bar
i}f+S_{, m}f_{\bar m}f\nonumber\\
&=&\int_M\;(\Delta f)^2+R_{i\bar j}f_{j\bar
i}f-S(f\Delta f+|\Na f|^2)\\
&\geq &\|\Delta
f\|_2^2-\La\|\Na\bar\Na f\|_2-\La\|\Delta f\|_2-\La \|\Na f\|_2^2\nonumber\\
&\geq &\frac 12\|\Delta
f\|_2^2-5\La^2,\nonumber \label{eq003}\eeqn
where we used the Cauchy-Schwarz inequality in the last two
inequalities. Thus, we have
$$\|\Na f\|^2_2\leq \|\Delta f\|_2<4(\mu^{\frac 12}+\La).$$
 So we have from \eqref{eq004} and \eqref{eq0044} \beqn
 \max\Big\{\int_M\;|f_{\bar m\bar
kk}|^2, \int_M\;|f_{\bar m\bar nm}|^2\Big\}&<&8(\mu^{\frac 12}+\La)^3. \nonumber\eeqn
 The lemma
is proved.\\
\end{proof}

Now we estimate $I_1$ at
time  $t$: \beqn I_1&=&\int_M\; -fS_{, m\bar n}f_{\bar m\bar
kkn}\,\oo_{t}^n=\int_M\; \Big(f_nS_{, m\bar
n}f_{\bar m\bar kk}+f(\Delta S)_{, m}f_{\bar m\bar kk}\Big)\,\oo_{t}^n\nonumber\\
&=&\int_M\;\Big( f_nS_{, m\bar n}f_{\bar m\bar kk}-f_m \Delta S
f_{\bar
m\bar kk}-f \Delta S f_{\bar m\bar kkm}\Big)\,\oo_t^n\nonumber\\
&=&\int_M\; \Big(f_nS_{, m\bar n}f_{\bar m\bar kk}-f_m \Delta S f_{\bar
m\bar kk}-\mu\cdot f^2 \Delta S\Big)\,\oo_t^n. \label{eq002}\eeqn
Thus, from \eqref{eq004} we have $I_1:$ \beqn
|I_1|&\leq &2\ee(t)\cdot\|\Na f\|_2
\|f_{\bar m\bar kk}\|_2+\ee(t)\mu\nonumber.  \eeqn
Similarly, we can estimate $I_2$  as follows: \beqn
|I_2|&=&\Big|\int_M\;f_{\bar m}S_{, m\bar k}f_{kl\bar l}\Big|
\leq \ee(t)
\|\Na f\|_2\|f_{\bar m\bar
kk}\|_2
\nonumber.
\label{eq007}\eeqn
Now we estimate $I_3$.
\begin{align*}
I_3&=\int_M\; -f(f_{ \bar m\bar n}R_{, m\bar pp})_{,
n}\,\oo_t^n=\int_M\; f_{
n} f_{ \bar m\bar n}R_{, m\bar pp}\oo_t^n
 =\int_M\; f_{
n} f_{,\bar m\bar n}(\Delta S)_{, m}\oo_t^n\\
&=-\int_M\; f_{
n} f_{ \bar m\bar nm}\Delta S\oo_t^n
-\int_M\; f_{
nm} f_{ \bar m\bar n}\Delta S\oo_t^n.
\end{align*}
So we have
\begin{align*}
|I_3|&\leq\ee(t)\,\|\Na f\|_2\|f_{ \bar m\bar nm}\|_2
+\ee(t)\mu.
\end{align*}
Combining the above inequalities, we have the differential inequality
\beq \frac d{dt}\mu(f, t_0)> -26\ee(t) ( \mu+\La^2) \label{eq400}\eeq
holds. \\

Since $\mu(f, t)$ is a smooth function of $t$, there is a small
neighborhood $[t_0-\dd, t_0+\dd]$ of $t_0$ such that for all $t\in
[t_0-\dd, t_0+\dd]$ the inequality (\ref{eq400}) holds.  Therefore, for any $t_1\in (t_0-\dd, t_0)$ we have
$$\log(\mu(f, t_0)+\La^2)\geq \log(\mu(f, t_1)+\La^2)-26\int_{t_1}^{t_0}\,\ee(t)\,dt.$$
Choose $\mu(f, t_0)=\mu_1(t_0)$, while $ \mu(f, t_1)\geq \mu_1(t_1)$,
we get the inequality \beq \log(\mu_1(t_0)+\La^2)\geq
\log(\mu_1(t_1)+\La^2)-26\int_{t_1}^{t_0}\,\ee(t)\,dt.\label{eq204}\eeq
Since $t_0$ is arbitrary, the inequality (\ref{eq204}) holds for any
$0\leq t_1\leq t_0\leq T.$
So we obtain
$$(\mu_1(t)+\La^2)\geq (\mu_1(0)+\La^2)e^{-26\int_0^t\,\ee(t)\,dt},\quad \forall\,t\in [0, T].$$
The proposition is proved.

\end{proof}

\subsection{The first eigenvalue estimate with bounded K\"ahler potentials}
In this subsection, we will show the exponential decay of the Calabi energy on a
given time interval where the K\"ahler potential is bounded along the Calabi flow. First, we estimate
the eigenvalue of the operator $L$.

\begin{lem}\label{lem:key1} Let $\oo_\varphi=\oo+\pbp\varphi$ be a
K\"ahler metric with the K\"ahler potential in the space
$$\varphi\in \cC_A:=\Big\{\varphi\in C^{\infty}(M, \RR)\;\Big|\;
\oo+\pbp\varphi>0,\;|\varphi|_{C^{4, \al}}\leq A\Big\}.$$ Then there
exists a constant $\mu=\mu(\oo, A)$ such that for any smooth
function $f(x)\in W^{2,2}(M,\omega)$ with
\begin{align}\label{lem:key1con}
\int_M\;f\,\oo_{\varphi}^n=0,\quad
\int_M\;X(h)\,\oo_{\varphi}^n=0,\quad \forall\,X\in \frak
h_0(M),
\end{align} where $h$ is a function determined by the
equation
$\Delta_{\varphi}h=f$ and the normalization condition
$\int_M\; h\,\oo_{\varphi}^n=0,$ the following inequality holds: \beq
\int_M\;|\Na\Na f|^2\,\oo_{\varphi}^n\geq \mu
\int_M\;f^2\,\oo_{\varphi}^n. \nonumber\eeq

\end{lem}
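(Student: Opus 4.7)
The strategy is to argue by contradiction using the precompactness of $\cC_A$ in $C^{4,\beta}(M)$ for $\beta<\alpha$. Before setting up the contradiction I first reformulate the constraint on $h$ in terms of $f$. For any $X\in\mathfrak h_0(M)$ with Hamiltonian potential $\theta_X$ with respect to $\oo_\varphi$ (so that $L_X\oo_\varphi=\pbp\theta_X$) one has $L_X\oo_\varphi^n=(\tri_\varphi\theta_X)\,\oo_\varphi^n$, and integration by parts gives
\begin{align*}
\int_M X(h)\,\oo_\varphi^n
=-\int_M h\,\tri_\varphi\theta_X\,\oo_\varphi^n
=-\int_M f\,\theta_X\,\oo_\varphi^n.
\end{align*}
Thus the hypothesis on $h$ is equivalent to $f\perp\theta_X$ in $L^2(\oo_\varphi^n)$ for every $X\in\mathfrak h_0(M)$. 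By the Calabi--Lichnerowicz characterisation $\ker L_\varphi=\{u:\Na_i\Na_j u=0\}$ and on mean-zero functions this kernel is exactly the span of such $\theta_X$; so the constraint is precisely the orthogonality to $\ker L_\varphi$ one needs for a spectral gap.

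Now suppose the lemma fails. Then there exist $\varphi_k\in\cC_A$ and $f_k\in W^{2,2}(M)$ satisfying the hypotheses with $\int_M f_k^2\,\oo_{\varphi_k}^n=1$ and $\int_M|\Na\Na f_k|^2\,\oo_{\varphi_k}^n\to 0$. The bound $|\varphi_k|_{C^{4,\alpha}}\leq A$ together with Arzelà--Ascoli gives a subsequence with $\varphi_k\to\varphi_\infty$ in $C^{4,\beta}$, so the coefficients of the fourth-order elliptic operator $L_{\varphi_k}$ converge uniformly to those of $L_{\varphi_\infty}$. Using the pointwise identity
\begin{align*}
\int_M f\,L_\varphi f\,\oo_\varphi^n
=\int_M(\tri_\varphi f)^2\,\oo_\varphi^n
+\int_M R_{i\bar j}(\oo_\varphi)f_{j\bar i}f\,\oo_\varphi^n
+\int_M S(\oo_\varphi)_{,i}f_{\bar i}f\,\oo_\varphi^n,
\end{align*}
together with Poincar\'e and Cauchy--Schwarz interpolation, I would upgrade $\|\Na\Na f_k\|_{L^2}\to 0$ and $\|f_k\|_{L^2}=1$ to a uniform $W^{2,2}(\oo_{\varphi_k})$-bound on $f_k$. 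Rellich--Kondrachov then produces $f_k\to f_\infty$ strongly in $L^2$ and weakly in $W^{2,2}$, with $\int_M f_\infty^2\,\oo_\infty^n=1$ and, by weak lower semicontinuity of the $L^2$-norm applied to $\Na\Na f_k$, $\int_M|\Na\Na f_\infty|^2\,\oo_\infty^n=0$.

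Finally, $\Na_i\Na_j f_\infty=0$ forces $\Na^{1,0}f_\infty$ to be a holomorphic vector field on $(M,J)$, so $f_\infty=\theta_{X_\infty}$ for some $X_\infty\in\mathfrak h_0(M)$, the mean-zero normalisation determining the additive constant. Passing to the limit in the orthogonality $\int_M f_k\,\theta_{Y,k}\,\oo_{\varphi_k}^n=0$---legitimate because the Hamiltonian potentials $\theta_{Y,k}$ solve a Laplace-type equation with uniformly $C^{2,\beta}$ coefficients and therefore converge to $\theta_{Y,\infty}$---and then choosing $Y=X_\infty$ yields $\int_M f_\infty^2\,\oo_\infty^n=0$, which contradicts $\int_M f_\infty^2\,\oo_\infty^n=1$. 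The main technical obstacle is exactly this limiting step: since $\cC_A$ does not encode a quantitative lower bound on $\oo_\varphi$, one must either show the limit $\oo_\infty$ remains strictly positive (so that all integrations by parts and elliptic estimates survive in the limit) or carry out the whole argument in the fixed reference geometry $\oo$, using that the $W^{2,2}$-norms with respect to $\oo_{\varphi_k}^n$ and $\oo^n$ stay comparable while $\varphi_k$ is uniformly $C^{4,\alpha}$.
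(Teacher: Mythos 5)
Your proposal is correct and follows essentially the same route as the paper: a contradiction/compactness argument in $\cC_A$, a Bochner-type identity to upgrade $\|\Na\Na f_k\|_{L^2}\to 0$ and $\|f_k\|_{L^2}=1$ to a uniform $W^{2,2}$ bound, Rellich plus weak lower semicontinuity to produce a limit $f_\infty$ with $\Na\Na f_\infty=0$, and the orthogonality constraint to force $\int_M f_\infty^2\,\oo_\infty^n=0$ (the paper keeps the potential $h$ and shows $h_{m_k}\to h_\infty$ in $W^{3,2}$ rather than rewriting the constraint as $f\perp\theta_X$, but the two formulations agree after one integration by parts). The degeneracy caveat you flag at the end is a fair observation, but it applies equally to the paper's own proof, which tacitly treats $\oo_{\varphi_\infty}$ as nondegenerate; in all applications the lemma is invoked only on sets where $\oo_\varphi\geq\la\oo$.
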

\begin{proof} Without lose of generality, we assume $f$ is a smooth function. We prove
the lemma by the contradiction argument. Suppose the conclusion
fails, we can find a sequence of functions $\varphi_m\in \cC_A$ and
functions $f_m\in C^{\infty}(M)$ satisfying the properties \beq
\int_M\;|f_m|^2\,\oo_{\varphi_m}^n=1,\quad
\int_M\;f_m\,\oo_{\varphi_m}^n=0,\quad
\int_M\;X(h_m)\,\oo_{\varphi_m}^n=0,\quad \forall\,X\in \frak
h_0(M), \label{eq0022}\eeq where $h_m$ is a solution of the equation
$\Delta_{m}h_m=f_m$ with $\int_M\; h_m\;\oo_{\varphi_m}^n=0$ and
\beq \int_M\;|\nabla\nabla f|^2\,\oo_{\varphi_m}^n=\mu_m\ri0. \eeq
Here $\Delta_m$ denotes the Laplacian operator of the metric
$\oo_{\varphi_m}.$ Since $\varphi_m\in \cC_A$, we can assume that
$\varphi_m\ri \varphi_{\infty}$ in $C^{4,\al'}(M)$ for some
$\al'<\al$. Note that \beqs
\int_M\;(\Delta_{m}f_m)^2\,\oo_{\varphi_m}^n&=&\int_M\;|\nabla\nabla
f|^2\,\oo_{\varphi_m}^n
+\int_M\;Ric(\oo_{\varphi_m})(\Na f_m, \Na f_m)\,\oo_{\varphi_m}^n\\
&\leq &\mu_m+C_1(A, \oo)\int_M\;|\Na f_m|^2\,\oo_{\varphi_m}^n\\
&\leq &\mu_m+C_2(A, \oo)\int_M\; f_m^2\,\oo_{\varphi_m}^n+\frac
12\int_M\;(\Delta_{m}f_m)^2\,\oo_{\varphi_m}^n, \eeqs which implies
that \beq \int_M\;(\Delta_{m}f_m)^2\,\oo_{\varphi_m}^n\leq C_3(A,
\oo). \label{eq0021} \eeq Combining (\ref{eq0022})-(\ref{eq0021}),
there is a subsequence of functions $\{f_m\}$, which we denote by
$\{f_{m_k}\}$, converges weakly in $W^{2, 2}(M,
\oo_{\varphi_{\infty}})$:
$$f_{m_k}\rightharpoonup f_{\infty}, \;\;\hbox{in}\;\; W^{2, 2}(M,
\oo_{\varphi_{\infty}}),\quad k\ri+\infty. $$ By the Sobolev
embedding theorem, we have \beq \Na_{m_k}f_{m_k} \rightarrow
\Na_{\infty} f_{\infty}, \;\;\hbox{in}\;\; L^2(M,
\oo_{\varphi_{\infty}}),\quad \hbox{and}\quad f_{m_k}\rightarrow
f_{\infty} \;\;\hbox{in}\;\; L^2(M, \oo_{\varphi_{\infty}}).
\label{eq0023}\eeq In particular, we have \beq
\int_M\;f_{\infty}^2\,\oo_{\varphi_{\infty}}^n=1,\quad
\int_M\;f_{\infty}\,\oo_{\varphi_{\infty}}^n=0. \label{eq0025}\eeq
By (\ref{eq0023}) the functions $h_{m_k}$ subconverges to a function
$h_{\infty}$ in $W^{3, 2}(M, \oo_{\varphi_{\infty}}),$ which implies
that \beq \int_M\; X(h_{\infty})\,\oo_{\infty}^n=0, \quad
\forall\,X\in \frak h_0(M). \label{eq0024} \eeq On the other hand,
we have \beqs
\int_M\;|\Na_{\infty}\Na_{\infty}f_{\infty}|^2\,\oo_{\varphi_\infty}^n&\leq&
\liminf_{k\ri
+\infty}\;\int_M\;|\Na_{\infty}\Na_{\infty}f_{m_k}|^2\,\oo_{\varphi_\infty}^n\\
&\leq&\liminf_{k\ri
+\infty}\;\int_M\;|\Na_{m_k}\Na_{m_k}f_{m_k}|^2\,\oo_{m_k}^n=0.
\eeqs Thus, $X_{\infty}:=\Na_{\infty}f_{\infty}$ is a holomorphic
vector field in $\frak h_0(M)$, and by (\ref{eq0024}) we have
$$0=\int_M\;X_{\infty}(h_{\infty})\,\oo_{\varphi_\infty}^n=-\int_M\;f_{\infty}\Delta_{\infty}
h_{\infty}\,\oo_{\varphi_\infty}^n=-\int_M\;f_{\infty}^2\,\oo_{\varphi_\infty}^n,$$
which contradicts (\ref{eq0025}). The lemma is proved.\\
\end{proof}

A direct corollary of Lemma \ref{lem:key1} is the following result:

\begin{lem}\label{lem:calabi1}Let $\oo_t(t\in [0, T])$ be a solution
of Calabi flow. Suppose that the Futaki invariant of $[\oo_t]$ vanishes.
If the metric satisfies
$|\varphi(t)|_{C^{4,\al}}\leq A$,  then there is a constant $\mu=\mu(A,
\oo)$ such that
$$\pd {}tCa(t)\leq -\mu Ca(t),\quad \forall t\in [0, T].$$
\end{lem}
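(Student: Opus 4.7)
The plan is to combine the Calabi-energy dissipation identity \eqref{epo} with the spectral gap provided by \lemref{lem:key1}.

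First I would use the fact that along the Calabi flow $\dot\varphi(t)=S(\varphi(t))-\un S$, together with the evolution identity \eqref{epo} (which, in the absence of modification, takes the form $\partial_t\int_M\dot\varphi^2\om_\varphi^n=-2\int_M\dot\varphi L\dot\varphi\,\om_\varphi^n$), to write
\begin{equation*}
\pd{}t Ca(t)=\pd{}t\int_M(S-\un S)^2\om_t^n=-2\int_M(S-\un S)\,L_t(S-\un S)\,\om_t^n=-2\int_M|\Na\Na(S-\un S)|^2\,\om_t^n,
\end{equation*}
where the last equality is the integration-by-parts identity $\int u L u\,\om^n=\int|u_{,ij}|^2\om^n=\|\Na\Na u\|_2^2$ that underlies the eigenvalue calculation used in \propref{lem:eigen2}.

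Next I would set $f_t:=S(\om_t)-\un S$ and verify the two hypotheses of \lemref{lem:key1} with the metric $\om_t$ in place of $\om_\vphi$. The normalization $\int_M f_t\,\om_t^n=0$ is immediate. Let $h_t$ be the unique function solving $\Delta_t h_t=f_t$ with $\int_M h_t\,\om_t^n=0$; this is (up to sign) the Ricci potential of $\om_t$, and the classical identity
\begin{equation*}
F_{[\om]}(X)=\int_M X(h_t)\,\om_t^n,\qquad X\in\frak h_0(M),
\end{equation*}
together with the vanishing of the Futaki invariant on $[\om]=[\om_t]$, gives exactly the second hypothesis $\int_M X(h_t)\,\om_t^n=0$ for all $X\in\frak h_0(M)$. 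Since $|\vphi(t)|_{C^{4,\al}}\leq A$, we have $\vphi(t)\in \cC_A$, so \lemref{lem:key1} applies and yields a constant $\mu=\mu(A,\om)>0$ with
\begin{equation*}
\int_M|\Na\Na f_t|^2\,\om_t^n\geq \mu\int_M f_t^2\,\om_t^n=\mu\,Ca(t).
\end{equation*}
Substituting into the dissipation formula gives $\pd{}t Ca(t)\leq-2\mu Ca(t)$, and renaming $2\mu\to\mu$ finishes the proof.

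I do not anticipate a serious obstacle: the analytic content is entirely absorbed into \lemref{lem:key1} (whose contradiction/compactness argument already uses the $C^{4,\al}$-bound on $\vphi$), and the only matter of care is checking that the abstract condition $\int X(h)\,\om^n=0$ in the statement of that lemma coincides with the vanishing of the classical Futaki invariant applied to the Ricci potential of $\om_t$, which is standard.
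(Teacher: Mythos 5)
Your proposal is correct and follows essentially the same route as the paper: the paper's proof likewise computes $\frac{d}{dt}Ca(t)=-\int_M|\Na\Na S|^2\,\oo_{\varphi}^n$ (your factor of $2$ versus the paper's normalization is immaterial after renaming $\mu$) and then applies Lemma \ref{lem:key1} with $f=S-\un S=\dot\varphi$, using the vanishing of the Futaki invariant to verify the second condition in \eqref{lem:key1con}. Your identification of $h$ with the potential appearing in the classical Futaki invariant is exactly the step the paper leaves implicit, so you have in fact supplied slightly more detail than the original.
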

\begin{proof}
Direct calculation shows that
$$\frac d{dt}Ca(t)=-\int_M\;|\Na\Na S|^2\,\oo_{\varphi}^n.$$
In Lemma \ref{lem:key1}, choose $f=S-\underline S=\dot\varphi$. Since the Futaki invariant vanishes, the second condition in \eqref{lem:key1con} holds. Therefore, there
is a constant $\mu=\mu(A, \oo)$ such that
$$\int_M\;|\Na\Na S|^2\,\oo_{\varphi}^n\geq \mu \int_M\; (S-\un S)^2\,\oo_{\varphi}^n,
\quad t\in [0, T].$$
The lemma follows directly from the above inequalities.
\end{proof}

We have a similar result for the modified Calabi flow.

\begin{lem}\label{lem:calabi1ex}
Let $\oo_t(t\in [0, T])$ be a solution
of modified Calabi flow. Suppose that the modified
 Futaki invariant of $[\oo_t]$ vanishes.
If the metric satisfies
$|\psi(t)|_{C^{4,\al}}\leq A$, then there is a constant $\mu=\mu(A,
\oo)$ such that
$$\pd {}t\td Ca(t)\leq -\mu \td Ca(t),\quad \forall\, t\in [0, T].$$
\end{lem}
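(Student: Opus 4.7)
I propose to mirror the proof of Lemma \ref{lem:calabi1}, substituting the modified scalar curvature $S-\un S-\te_{\Re V}(\psi)$ for $S-\un S$ throughout. Along the modified Calabi flow, $\dot\psi=S(\oo_\psi)-\un S-\te_{\Re V}(\psi)$, so $\td Ca(t)=\int_M\dot\psi^2\,\oo_\psi^n$, and the evolution identity \eqref{epo} gives
\[
\pd{}t\td Ca(t)=-2\int_M\dot\psi\,L\dot\psi\,\oo_\psi^n.
\]
It therefore suffices to establish a uniform spectral-gap type inequality $\int_M u\,Lu\,\oo_\psi^n\geq \mu\int_M u^2\,\oo_\psi^n$ with $u=S-\un S-\te_{\Re V}(\psi)$, whenever $\psi\in\mathcal H_K(M,\oo)$ satisfies $|\psi|_{C^{4,\al}}\leq A$ and the modified Futaki invariant of $[\oo]$ vanishes.

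The second step is a $K$-invariant analog of Lemma \ref{lem:key1}. The crucial observation is that if $h$ solves $\tri_\psi h=u$ with $\int_M h\,\oo_\psi^n=0$, then integration by parts gives, for any holomorphic vector field $X$ in the reduced algebra commuting with $V$,
\[
\int_M X(h)\,\oo_\psi^n=-\int_M\te_X\,\tri_\psi h\,\oo_\psi^n=-\int_M\te_X(S-\un S-\te_{\Re V})\,\oo_\psi^n=\td F(X),
\]
which vanishes by hypothesis. Together with the normalization $\int_M u\,\oo_\psi^n=0$ (coming from $\int_M\te_{\Re V}\oo_\psi^n=0$ and $\int_M S\oo_\psi^n=\un S$), this supplies the orthogonality conditions needed to run the contradiction argument of Lemma \ref{lem:key1} in the $K$-invariant category.

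Suppose for contradiction that there exist $K$-invariant $\psi_m$ with $|\psi_m|_{C^{4,\al}}\leq A$ and $K$-invariant $u_m$ satisfying $\int u_m^2\oo_{\psi_m}^n=1$, $\int u_m\,\oo_{\psi_m}^n=0$, and $\int X(h_m)\oo_{\psi_m}^n=0$ for all $X$ in the relevant subalgebra, but with $\int u_m L_{\psi_m} u_m\,\oo_{\psi_m}^n\to 0$. Passing to a $C^{4,\al'}$ limit $\psi_\infty$ and a weak $W^{2,2}$ limit $u_\infty$, the limit satisfies $\int u_\infty L_\infty u_\infty\,\oo_{\psi_\infty}^n=0$. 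Since the kernel of $L_\infty$ on real functions consists of those $f$ for which $\nabla^{1,0}f$ is a holomorphic vector field, the limit $\nabla^{1,0}u_\infty$ is a holomorphic vector field in the reduced algebra. The orthogonality, passed to the limit, then forces $\int_M u_\infty^2\,\oo_{\psi_\infty}^n=0$, contradicting the normalization, and yielding the desired uniform $\mu(A,\oo)>0$.

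The main obstacle I anticipate is keeping the whole argument rigorously in the $K$-invariant category and identifying the limiting holomorphic vector field $\nabla^{1,0}u_\infty$ as lying in precisely the subalgebra on which the modified Futaki invariant acts, so that its vanishing does kill $u_\infty$. Once that bookkeeping is settled, the remaining pieces---compactness of $\psi_m$ in $C^{4,\al'}$, weak $W^{2,2}$ subconvergence of $u_m$, integration-by-parts identities and the final differential inequality $\pd{}t\td Ca\leq -\mu\td Ca$---follow by the same steps as in Lemma \ref{lem:calabi1}.
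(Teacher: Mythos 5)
Your proposal is correct and follows essentially the same route as the paper: the paper's proof is exactly the combination of the evolution identity \eqref{epo} with Lemma \ref{lem:key1} applied to $f=\dot\psi=S-\un S-\te_{\Re V}(\psi)$, observing that the vanishing of the modified Futaki invariant supplies the orthogonality hypotheses \eqref{lem:key1con}. Your write-up merely makes explicit the integration-by-parts identity $\int_M X(h)\,\oo_\psi^n=\td F(X)$ and re-runs the compactness/contradiction argument that the paper delegates to Lemma \ref{lem:key1}, so no further comparison is needed.
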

\begin{proof}
By (\ref{epo}) we obtain the evolution of the modified
Calabi energy along the modified Calabi flow,
$$\frac d{dt}\td Ca(t)=-2\int_M\;|\Na\Na \dot\psi|^2\,\oo_{\psi}^n.$$
In Lemma \ref{lem:key1}, let $f=\dot\psi
=S_\psi-\underline S+\theta_{\Re V}(\psi)$, the vanishing modified Futaki invariant guarantee the condition \ref{lem:key1con}. So there
is a constant $\mu=\mu(A, \oo)$ such that
$$\int_M\;|\Na\Na \dot\psi|^2\,\oo_{\psi}^n\geq \mu \int_M\; \dot\psi^2\,\oo_{\psi}^n,
\quad t\in [0, T].$$
The lemma follows directly from \eqref{mcf} and \eqref{mcalabie}.
\end{proof}

\subsection{The first eigenvalue estimate with bounded curvature}
In this subsection, we will show the exponential decay of the Calabi energy on a
given time interval where the Riemann curvature tensor and Sobolev constant are bounded.
Here, we assume that  $M$ has
non-zero holomorphic vector fields. The argument needs the pre-stable condition, which is defined as follows (cf. Chen-Li-Wang \cite{MR2481736},
Phong-Sturm \cite{MR2215459}).
\begin{defi}\label{pre-stable} The complex structure $J$ of $M$
is called pre-stable, if no complex structure of the orbit of
diffeomorphism group contains larger (reduced) holomorphic
automorphism group.

\end{defi}

\begin{lem}\label{lem:key2}Suppose that $(M, J)$ is pre-stable.  For
any $\La, K>0$, if the metric $\oo_\varphi\in \Om$ satisfies \beq
|Rm|(\oo_{\varphi})\leq \La,\quad C_S(\oo_{\varphi})\leq K,
\label{eq100}\eeq then there exists a constant $\mu=\mu(\La, K,
\oo)>0$ such that for any smooth function $f(x)\in C^{\infty}(M)$
with \beq \int_M\;f\,\oo_{\varphi}^n=0,\quad
\int_M\;X(h)\,\oo_{\varphi}^n=0,\quad \forall\,X\in \frak h_0(M),
\eeq where $h$ is a function determined by the equation
$\Delta_{\varphi}h=f,$ the following inequality holds: \beq
\int_M\;|\Na\Na f|^2\,\oo_{\varphi}^n\geq \mu
\int_M\;f^2\,\oo_{\varphi}^n. \eeq

\end{lem}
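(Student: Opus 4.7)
The plan is to run a contradiction argument parallel to the proof of Lemma \ref{lem:key1}, replacing the $C^{4,\alpha}$ bound on K\"ahler potentials (which provided an immediate subsequential limit) by a Cheeger--Gromov type compactness theorem for K\"ahler manifolds with bounded Riemann curvature and bounded Sobolev constant. The pre-stability hypothesis will be what allows us to push the holomorphic-invariance constraint from $\mathfrak{h}_0(M, J)$ through to the limit.

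First, I would assume the conclusion fails and produce a sequence of K\"ahler metrics $\om_m=\om_{\varphi_m}$ in $\Omega$ satisfying \eqref{eq100}, and smooth functions $f_m$ with
\[
\int_M f_m\,\om_m^n = 0,\qquad \int_M f_m^2\,\om_m^n = 1,\qquad \int_M X(h_m)\,\om_m^n = 0\ \ \forall\,X\in\mathfrak h_0(M),
\]
where $\Delta_m h_m=f_m$ and $\int_M h_m\,\om_m^n=0$, and such that $\mu_m:=\int_M|\nabla\nabla f_m|^2\,\om_m^n\to 0$. As in the proof of Lemma \ref{lem:key1}, commuting derivatives and bounding Ricci curvature by $\La$ gives $\int_M(\Delta_m f_m)^2\,\om_m^n \leq C(\La)$, so $\|f_m\|_{W^{2,2}}$ is uniformly bounded.

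Second, since $|Rm|\leq \La$ and $C_S(\om_m)\leq K$, the volume is noncollapsed, the diameter is bounded, and the injectivity radius is bounded below (via Cheeger's lemma). Thus there exist diffeomorphisms $F_m:M\to M$ such that $(F_m^*\om_m, F_m^*J_m)$ converges in $C^{1,\alpha}$, after passing to a subsequence, to a K\"ahler structure $(\om_\infty, J_\infty)$ on $M$. By the pre-stability hypothesis, the limit complex structure $J_\infty$ lies in the diffeomorphism orbit of $J$ and has reduced holomorphic algebra of the same dimension as $\mathfrak h_0(M,J)$; hence the diffeomorphism identifying $J_\infty$ with $J$ conjugates $\mathfrak h_0(M,J_\infty)$ onto $\mathfrak h_0(M,J)$. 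Pulling back by $F_m$, set $\tilde f_m=F_m^*f_m$ and $\tilde h_m=F_m^*h_m$; these retain the normalizations, while $\int_M\tilde X(\tilde h_m)\,(F_m^*\om_m)^n=0$ holds for $\tilde X=F_m^*X$, $X\in\mathfrak h_0(M,J)$.

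Third, by Rellich--Kondrachov, after further extraction $\tilde f_m\rightharpoonup f_\infty$ weakly in $W^{2,2}$ and strongly in $L^2$, and $\tilde h_m\to h_\infty$ strongly in $W^{1,2}$, with $\Delta_\infty h_\infty=f_\infty$ and $\int_M f_\infty^2\,\om_\infty^n=1$, $\int_M f_\infty\,\om_\infty^n=0$. Lower semicontinuity of the $W^{2,2}$-seminorm gives
\[
\int_M|\nabla_\infty\nabla_\infty f_\infty|^2\,\om_\infty^n \leq \liminf_{m\to\infty}\mu_m = 0,
\]
so $X_\infty:=\nabla f_\infty$ is a holomorphic vector field on $(M,J_\infty)$, i.e. $X_\infty\in\mathfrak h_0(M,J_\infty)$.

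Finally, I would transfer the invariance constraint to the limit: by pre-stability and $C^{1,\alpha}$ convergence of the complex structures, every $\tilde X\in\mathfrak h_0(M,J_\infty)$ is the $C^{0}$-limit of some sequence $F_m^*X_m$ with $X_m\in\mathfrak h_0(M,J)$, so passing to the limit in $\int_M \tilde X(\tilde h_m)\,(F_m^*\om_m)^n=0$ yields $\int_M \tilde X(h_\infty)\,\om_\infty^n=0$ for every $\tilde X\in\mathfrak h_0(M,J_\infty)$. Taking $\tilde X=X_\infty$ and integrating by parts,
\[
0 = \int_M X_\infty(h_\infty)\,\om_\infty^n = -\int_M f_\infty\,\Delta_\infty h_\infty\,\om_\infty^n = -\int_M f_\infty^2\,\om_\infty^n = -1,
\]
which is the desired contradiction.

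The main obstacle is the second step: establishing that the sequence $(M,\om_m,J_m)$ admits a genuine $C^{1,\alpha}$ K\"ahler Cheeger--Gromov limit whose complex structure is controlled by pre-stability, so that $\mathfrak h_0(M,J_\infty)$ can be identified with $\mathfrak h_0(M,J)$ and the invariance constraint survives in the limit. Bounded Riemann curvature plus the K\"ahler condition lets us bootstrap in harmonic (or Kuranishi-type) coordinates to get the regularity needed for the complex structures to converge, but the key point is that pre-stability prevents any jumping to a larger reduced automorphism algebra in the limit, which is precisely what is required to exclude the ``extra'' holomorphic vector field $X_\infty$ produced by the failure of the inequality.
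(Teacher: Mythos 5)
Your proposal is correct and takes essentially the same route as the paper, whose entire proof of this lemma is a one-line reference to the contradiction argument of Lemma~\ref{lem:key1} together with Theorem 4.16 of Chen--Li--Wang \cite{MR2481736} --- precisely the Cheeger--Gromov compactness (via bounded curvature and the Sobolev-constant non-collapsing) plus pre-stability argument you write out. The only imprecision is that $J_\infty$ need only lie in the \emph{closure} of the diffeomorphism orbit of $J$, not the orbit itself; pre-stability combined with upper semicontinuity of $\dim\frak h_0$ then forces $\dim\frak h_0(M,J_\infty)=\dim\frak h_0(M,J)$, which is what actually justifies approximating $X_\infty$ by pulled-back fields from $\frak h_0(M,J)$ in your final step.
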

\begin{proof}
The argument is almost identical to that of  Lemma \ref{lem:key1}. See also the proof of Theorem 4.16 in   Chen-Li-Wang \cite{MR2481736}.
\end{proof}

A direct corollary is the following

\begin{lem}\label{lem:calabi2}Let $\oo_t(t\in [0, T])$ be a solution
of  Calabi flow. Suppose that $(M, J)$ is pre-stable and
 the Futaki invariant of $[\oo_t]$ vanishes.
If the metric satisfies
$|Rm|(t)\leq A$ and $C_S(t)\leq K$ then there is a constant
$\mu=\mu(A, K, \oo)$ such that
$$\pd {}tCa(t)\leq -\mu Ca(t),\quad \forall \,t\in [0, T].$$
\end{lem}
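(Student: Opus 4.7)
The argument is a direct analog of Lemma~\ref{lem:calabi1}, with Lemma~\ref{lem:key2} substituted for Lemma~\ref{lem:key1}. The plan is to run the same two-line derivation: write down the evolution of the Calabi energy, and then apply the relevant Poincar\'e-type eigenvalue estimate to the function $f=S-\un S$.

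\textbf{Step 1 (evolution of $Ca$).} Along the Calabi flow one has the standard identity
\begin{equation*}
\frac{d}{dt}Ca(t)=-\int_M |\Na\Na S|^2\,\oo_t^n,
\end{equation*}
which was already used in the proof of Lemma~\ref{lem:calabi1}. No change is needed here.

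\textbf{Step 2 (checking the hypotheses of Lemma~\ref{lem:key2}).} Take $f=S(\oo_t)-\un S$ at each fixed time $t\in[0,T]$. Then $\int_M f\,\oo_t^n=0$ by definition of $\un S$. Let $h$ be the normalized solution of $\Delta_t h=f$. For any holomorphic vector field $X\in\frak h_0(M)$ with holomorphy potential $\theta_X$ (so that $X^i=g^{i\bar j}\p_{\bar j}\theta_X$), integration by parts gives
\begin{equation*}
\int_M X(h)\,\oo_t^n=\int_M g^{i\bar j}\p_i h\,\p_{\bar j}\theta_X\,\oo_t^n=-\int_M \theta_X\,\Delta_t h\,\oo_t^n=-\int_M \theta_X(S-\un S)\,\oo_t^n,
\end{equation*}
which is exactly the Futaki invariant of $X$ on $[\oo_t]$ and vanishes by assumption. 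Hence both conditions in the hypothesis of Lemma~\ref{lem:key2} are met. The curvature bound $|Rm|(t)\leq A$ and the Sobolev bound $C_S(t)\leq K$ are likewise assumed.

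\textbf{Step 3 (applying the eigenvalue estimate and concluding).} Using pre-stability of $(M,J)$, Lemma~\ref{lem:key2} supplies a constant $\mu=\mu(A,K,\oo)>0$, uniform in $t\in[0,T]$, such that
\begin{equation*}
\int_M |\Na\Na S|^2\,\oo_t^n\geq \mu\int_M (S-\un S)^2\,\oo_t^n=\mu\,Ca(t).
\end{equation*}
Inserting this into Step~1 yields $\frac{d}{dt}Ca(t)\leq -\mu\,Ca(t)$, which is the claim.

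\textbf{Main obstacle.} There is no real obstacle here beyond bookkeeping: the genuinely hard analytic content, namely the contradiction/compactness argument producing a uniform positive lower bound for the fourth-order operator under bounded curvature and Sobolev constant plus pre-stability, has already been packaged into Lemma~\ref{lem:key2}. The only point requiring attention is the identification in Step~2 of the constraint $\int_M X(h)\,\oo_t^n=0$ with the vanishing of the Futaki invariant, which is a one-line integration by parts once one writes $X$ through its holomorphy potential.
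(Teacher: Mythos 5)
Your proof is correct and follows exactly the route the paper intends: the paper states this lemma as a direct corollary of Lemma~\ref{lem:key2}, mirroring how Lemma~\ref{lem:calabi1} is deduced from Lemma~\ref{lem:key1}, and your three steps (the evolution identity for $Ca$, the integration-by-parts identification of $\int_M X(h)\,\oo_t^n$ with the Futaki invariant, and the uniform eigenvalue bound from pre-stability plus the curvature and Sobolev bounds) are precisely the omitted details.
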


 We have an analogous result for the modified Calabi flow.

\begin{lem}\label{lem:calabi2ex}Let $\oo_t(t\in [0, T])$ be a solution
of modified Calabi flow. Suppose that $(M, J)$ is pre-stable and
 the modified Futaki invariant of $[\oo_t]$ vanishes.
If the metric
satisfies $|Rm|(t)\leq A$ and $C_S(t)\leq K$ then there is a
constant $\mu=\mu(A, K, \oo)$ such that
$$\pd {}t\td Ca(t)\leq -\mu \td Ca(t),\quad \forall \,t\in [0, T].$$
\end{lem}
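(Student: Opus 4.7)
The plan is to run the proof of Lemma \ref{lem:calabi1ex} essentially verbatim, but invoking Lemma \ref{lem:key2} instead of Lemma \ref{lem:key1} so that the hypothesis on $|\psi|_{C^{4,\alpha}}$ is replaced by the curvature and Sobolev bounds together with pre-stability. There should be no new analytic difficulty; the entire lemma reduces to checking that the two integral constraints appearing in Lemma \ref{lem:key2} are satisfied by $f=\dot\psi$.

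First I would recall from \eqref{epo} that along the modified Calabi flow
\begin{equation*}
\frac{d}{dt}\td Ca(t)=-2\int_M|\Na\Na\dot\psi|^2\,\oo_\psi^n,
\end{equation*}
where, by \eqref{mcf}, $\dot\psi=S(\oo_\psi)-\un S-\theta_{\Re V}(\psi)$ is the modified scalar curvature. By definition \eqref{mcalabie} of the modified Calabi energy, $\td Ca(t)=\int_M\dot\psi^2\,\oo_\psi^n$, so it suffices to produce a uniform lower bound
\begin{equation*}
\int_M|\Na\Na\dot\psi|^2\,\oo_\psi^n\geq\mu\int_M\dot\psi^2\,\oo_\psi^n.
\end{equation*}

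To get this I would apply Lemma \ref{lem:key2} with $f=\dot\psi$. The curvature hypothesis $|Rm|(t)\leq A$ and $C_S(t)\leq K$ in \eqref{eq100} are given, and $(M,J)$ is pre-stable. I then verify the two integral conditions on $f$: the mean-value condition $\int_M\dot\psi\,\oo_\psi^n=0$ follows immediately because $\int_M(S-\un S)\oo_\psi^n=0$ and $\theta_{\Re V}(\psi)$ is, by convention, normalized to have zero average; and the condition $\int_M X(h)\oo_\psi^n=0$ for every $X\in\mathfrak h_0(M)$ (where $\Delta_\psi h=\dot\psi$) follows from the vanishing of the modified Futaki invariant, since integration by parts gives
\begin{equation*}
\int_M X(h)\,\oo_\psi^n=-\int_M\theta_X\,\Delta_\psi h\,\oo_\psi^n=-\int_M\theta_X\dot\psi\,\oo_\psi^n=\td F(X)=0,
\end{equation*}
using the expression \eqref{mkfutaki} for $\td F$.

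With the constraints verified, Lemma \ref{lem:key2} yields a constant $\mu=\mu(A,K,\oo)>0$ with
\begin{equation*}
\int_M|\Na\Na\dot\psi|^2\,\oo_\psi^n\geq\mu\int_M\dot\psi^2\,\oo_\psi^n=\mu\,\td Ca(t),
\end{equation*}
and combining this with the evolution formula above gives $\partial_t\td Ca(t)\leq-2\mu\,\td Ca(t)$, which is the claim after relabeling. The only conceptual point is the verification of the second orthogonality condition; once this is translated into the statement of the modified Futaki invariant, there is no genuine obstacle, so I do not expect any step here to be the main difficulty.
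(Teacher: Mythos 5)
Your proposal is correct and is exactly the argument the paper intends: the paper omits the proof of this lemma (stating it as the analogue of Lemma \ref{lem:calabi1ex} with Lemma \ref{lem:key2} replacing Lemma \ref{lem:key1}), and your verification of the two orthogonality conditions via the normalization of $\theta_{\Re V}$ and the vanishing of the modified Futaki invariant supplies the details the paper leaves implicit.
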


\subsection{Decay estimates of higher order curvature}
In this subsection, we will show the decay of the higher order derivatives of the scalar curvature under the decay assumption of the Calabi energy. First, we recall the Sobolev inequality:

\begin{lem}\label{lem:sobolev2} For any integer $p>2n$ and K\"ahler metric $\oo$,
there is a constant
 $C_S=C_S( p,
 \oo)$ such that
 \beq
\max_M|f|\leq  C_S\Big(\int_M\;(|f|^p+|\Na f|^p)\,\oo^n\Big)^{\frac
1p}.
 \eeq

\end{lem}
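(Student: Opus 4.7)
The plan is to recognize this as the standard Morrey--Sobolev embedding applied to the fixed compact Riemannian manifold $(M,\oo)$. Since $M$ has real dimension $2n$ and the exponent satisfies $p>2n$, the classical embedding $W^{1,p}(M)\hookrightarrow C^{0,\alpha}(M)$ with $\alpha = 1-\tfrac{2n}{p}$ holds on any fixed smooth compact Riemannian manifold, and in particular gives a continuous inclusion into $C^{0}(M)$. Because the background metric $\oo$ is fixed and smooth, the embedding constant depends only on $p$ and $\oo$; calling this constant $C_{S}(p,\oo)$ yields exactly the stated inequality. The volume form $\oo^n$, the gradient $\Na$, and the norm $|\Na f|$ are all taken with respect to the fixed metric $\oo$, so no issue of dependence on varying geometries enters here.

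If one prefers a self-contained derivation rather than citing Morrey directly, one can bootstrap from the $L^{q^{*}}$ Sobolev inequality of Lemma~\ref{lem:sobolev} (applied with $g=\oo$ and $m=2n$). Starting with $q=p$ and iterating the inequality along the sequence $q_{k+1}=q_{k}^{*}=\tfrac{2n\,q_{k}}{2n-q_{k}}$ (once $q_{k}<2n$, and switching to the Morrey regime once $q_{k}>2n$), a standard De~Giorgi--Moser iteration yields
\[
\|f\|_{L^{\infty}(M,\oo)} \leq C\bigl(\|f\|_{L^{p}(M,\oo)}+\|\Na f\|_{L^{p}(M,\oo)}\bigr),
\]
with $C$ depending only on $p$, the Sobolev constant of $\oo$, and the volume of $\oo$. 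Combining the two terms into a single $W^{1,p}$ expression produces the form stated in the lemma.

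There is essentially no obstacle: the statement is a textbook Sobolev embedding on a fixed compact manifold, and the authors seem to record it here only to fix the notation $C_{S}=C_{S}(p,\oo)$ that will be used in the subsequent higher-order decay estimates. The only point requiring care is to emphasize that the constant depends solely on $p$ and the fixed reference metric $\oo$, not on any of the evolving K\"ahler metrics $\oo_{\vphi(t)}$ appearing along the Calabi flow; this independence is what makes the lemma usable uniformly in $t$.
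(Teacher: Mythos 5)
Your identification of Lemma~\ref{lem:sobolev2} as the Morrey--Sobolev embedding $W^{1,p}\hookrightarrow C^{0}$ for $p>2n=\dim_{\RR}M$ is correct, and it is consistent with the paper, which records this lemma without proof as a standard fact. Two points need correction, however. First, your closing remark inverts the role the lemma plays downstream: it is \emph{not} applied only to the fixed background metric. In Lemma~\ref{lem:tensor} the constant is explicitly $C(C_S(\oo),i)$ for a general K\"ahler metric $\oo$, and in Lemma~\ref{lem:decay2} the embedding is invoked for the evolving metrics $\oo_t$, with uniformity in $t$ coming from the \emph{hypothesis} $C_S(t)\leq K$, i.e.\ an assumed uniform bound on the Sobolev constants along the flow --- not from the constant depending only on the reference metric. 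Consequently the dependence of $C_S(p,\oo)$ on the geometry of $\oo$ (through the constants of Lemma~\ref{lem:sobolev} and the volume) is exactly what must be tracked, and a bare citation of Morrey's theorem on a fixed compact manifold, with an unquantified constant, does not deliver the form of the statement that the later arguments use. Second, your ``self-contained'' bootstrap is not viable as written: Lemma~\ref{lem:sobolev} requires $1\leq q<m=2n$, so it cannot be initialized at $q=p>2n$, and the exponent sequence $q_{k+1}=2nq_k/(2n-q_k)$ is undefined in that range. The standard repair is a Moser iteration on powers $|f|^{\beta}$, using the $q=2$ case of Lemma~\ref{lem:sobolev} and H\"older's inequality to absorb $\|\Na f\|_{L^p}$ at each stage; this produces the stated bound with $C_S$ controlled by the Sobolev constants of $\oo$ and $\vol(M,\oo)$, which is the version the paper actually needs.
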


To estimate the higher derivatives of the curvature, we need the
interpolation formula of Hamilton in \cite{MR664497}:
\begin{lem}\label{lem:Ha82} (\cite{MR664497})Let $n=\dim_{\CC}M.$ For any tensor $T$ and $1\leq j\leq
k-1$, we have \beqs \int_M\;|\Na^j T|^{2k/j}\,\omega^n&\leq&C\cdot
\max_M|T|^{\frac {2k}{j}-2}\int_M\;|\Na^k T|^2\,\omega^n,\\
 \int_M\;|\Na^j T|^{2 }\,\omega^n
&\leq &C\Big(\int_M\;|\Na^k T|^2\,\omega^n\Big)^{\frac jk}
 \Big(\int_M\;|  T|^2\,\omega^n\Big)^{1-\frac jk}\eeqs where
$C=C(k, n)$ is a constant.

\end{lem}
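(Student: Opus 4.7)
The plan is to establish both estimates via repeated integration by parts, following Hamilton's original argument in \cite{MR664497}. The basic ingredient is the following estimate, valid for any smooth tensor $T$ on the closed K\"ahler manifold $(M,\om)$, any $1\leq j\leq k-1$, and any real $p\geq 1$: integration by parts once in
\begin{equation*}
\int_M |\Na^j T|^{2p}\,\om^n=\int_M \langle \Na\Na^{j-1}T,\,\Na^j T\cdot |\Na^j T|^{2p-2}\rangle\,\om^n,
\end{equation*}
followed by the Cauchy--Schwarz inequality applied to each of the two terms produced by differentiating the weight $|\Na^j T|^{2p-2}$, gives
\begin{equation*}
\int_M |\Na^j T|^{2p}\,\om^n \leq (2p-1)\int_M |\Na^{j-1}T|\,|\Na^{j+1}T|\,|\Na^j T|^{2p-2}\,\om^n,
\end{equation*}
which we denote by $(\star)$. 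Here the curvature commutators produced when exchanging covariant derivatives are absorbed into a constant that depends only on the fixed background $(M,\om)$.

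The second inequality follows immediately from $(\star)$ with $p=1$: one more application of Cauchy--Schwarz yields $A_j\leq A_{j-1}^{1/2}A_{j+1}^{1/2}$ for $A_j:=\int_M|\Na^j T|^2\,\om^n$, so $j\mapsto\log A_j$ is convex on the integer interval $\{0,1,\ldots,k\}$. Linear interpolation between the endpoints then gives $A_j\leq A_0^{1-j/k}A_k^{j/k}$, which is exactly the claimed $L^2$--interpolation.

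For the first inequality I would proceed by reverse induction on $j$, with the base case $j=k$ trivial since $2k/j=2$ and $\max_M|T|^{2k/j-2}=1$. For the inductive step, apply $(\star)$ with $p=k/j$ (the derivation of $(\star)$ extends unchanged to real $p$), then apply H\"older's inequality, splitting the right-hand side into three factors arranged so that the first reduces via the inductive hypothesis applied at index $j+1$, the second is dominated by $\max_M|T|$, and the third contributes $\int_M|\Na^k T|^2\,\om^n$ through the $L^2$--interpolation already proved. The main obstacle is the exponent bookkeeping: one has to choose the H\"older exponents so that, after iteration, the right-hand side simplifies to $C\cdot\max_M|T|^{2k/j-2}\cdot\int_M|\Na^k T|^2\,\om^n$ with the correct powers, and so that all intermediate norms $\|\Na^i T\|$ with $0<i<k$ cancel out. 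This accounting is tedious but standard once the inductive framework is set up, and we refer to Hamilton's original calculation for the details.
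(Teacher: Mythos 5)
The paper offers no proof of this lemma at all --- it is quoted directly from Hamilton \cite{MR664497} --- and your sketch correctly reproduces Hamilton's argument: the integration-by-parts estimate $(\star)$ is his key computational lemma, your log-convexity derivation of the second ($L^2$-interpolation) inequality from it is complete, and the reverse induction with H\"older for the first inequality is the right framework, with the exponent bookkeeping legitimately deferred to the original. One caution about your parenthetical on curvature commutators: in deriving $(\star)$ no covariant derivatives need to be exchanged (one only pairs $\Na\Na^{j-1}T$ against a divergence and uses Kato's inequality), so no curvature terms arise and the constant is genuinely $C(k,n)$, independent of the metric --- this uniformity is essential, since the lemma is applied in Lemma \ref{lem:tensor} and Lemma \ref{lem:decay2} along the Calabi flow where $\om_t$ evolves, and a constant ``absorbed from the background geometry'' would not suffice there.
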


Combining Lemma \ref{lem:sobolev2} with Lemma \ref{lem:Ha82}, we have
the following result, which gives the higher order estimates in
terms of the integral norms of the tensor:

\begin{lem}\label{lem:tensor}For any  integer $i\geq 1$ and K\"ahler metric $\oo$,
there exists a constant $C=C(C_S(\oo), i)>0$ such that
 for any tensor $T$, we have
\begin{align*}
\max_M\;|\Na^iT|^2
&\leq C\cdot\max_M\;|T|^{\frac
{2n+1}{n+1}}\Big(\int_M\;|T|^2\,\oo^n\Big)^{\frac
1{4(n+1)}}\\
&\Big(\int_M\;|\Na^{4(n+1)i}T|^2\,\oo^n
+\int_M\;|\Na^{4(n+1)(i+1)}T|^2\,\oo^n\Big)^{\frac
1{4(n+1)}}.
\end{align*}

\end{lem}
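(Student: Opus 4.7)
The strategy is to chain the Sobolev inequality (Lemma~\ref{lem:sobolev2}) with the two interpolation inequalities of Hamilton (Lemma~\ref{lem:Ha82}), with the Sobolev exponent chosen as $p=4(n+1)>2n$ so that all the bookkeeping of exponents lines up with the target.

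First, I would apply Lemma~\ref{lem:sobolev2} to the scalar function $|\Na^i T|$ with $p=4(n+1)$. Using the Kato-type inequality $|\Na|\Na^i T|| \leq |\Na^{i+1}T|$ to control the gradient term, this yields
\beqs
\max_M |\Na^i T|^2 \leq C \Big(\int_M |\Na^i T|^{4(n+1)}\,\oo^n + \int_M |\Na^{i+1} T|^{4(n+1)}\,\oo^n\Big)^{1/(2(n+1))}.
\eeqs
Next, I would apply the first Hamilton interpolation in Lemma~\ref{lem:Ha82} with $(j,k)=(i,2i(n+1))$ and with $(j,k)=(i+1,2(i+1)(n+1))$, both of which satisfy $1\le j\le k-1$ since $i\ge 1$. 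Since $2k/j=4(n+1)$ in each case, one obtains
\beqs
\int_M |\Na^i T|^{4(n+1)}\,\oo^n &\leq& C\,\max_M|T|^{4n+2}\int_M |\Na^{2i(n+1)}T|^2\,\oo^n,\\
\int_M |\Na^{i+1}T|^{4(n+1)}\,\oo^n &\leq& C\,\max_M|T|^{4n+2}\int_M |\Na^{2(i+1)(n+1)}T|^2\,\oo^n.
\eeqs
Raising to the power $1/(2(n+1))$ produces precisely the factor $\max_M|T|^{(4n+2)/(2(n+1))} = \max_M|T|^{(2n+1)/(n+1)}$ that appears in the statement.

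To bring in the factor $(\int|T|^2)^{1/(4(n+1))}$ and to raise the number of derivatives to $4(n+1)i$ and $4(n+1)(i+1)$, I would apply the second Hamilton interpolation in Lemma~\ref{lem:Ha82} with $(j,k)=(2i(n+1),4i(n+1))$ and $(j,k)=(2(i+1)(n+1),4(i+1)(n+1))$, yielding
\beqs
\int_M |\Na^{2i(n+1)}T|^2\,\oo^n &\leq& C\Big(\int_M |\Na^{4i(n+1)}T|^2\,\oo^n\Big)^{1/2}\Big(\int_M |T|^2\,\oo^n\Big)^{1/2},
\eeqs
and analogously for the $(i+1)$-term. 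After raising to the $1/(2(n+1))$ power these contribute the desired factors $(\int|\Na^{4(n+1)i}T|^2)^{1/(4(n+1))}$, $(\int|\Na^{4(n+1)(i+1)}T|^2)^{1/(4(n+1))}$, and $(\int|T|^2)^{1/(4(n+1))}$.

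Finally, I would combine the two resulting terms using the concavity inequality $A^{1/m}+B^{1/m}\leq 2(A+B)^{1/m}$ for $m\ge 1$ (applied with $m=4(n+1)$) to collapse the sum into a single bracketed expression of the form stated. The resulting constant depends only on $C_S(\oo)$, $i$, and $n$, as required. There is no genuine analytic obstacle here; the entire argument is a careful matching of exponents, and the only point to get right is the initial choice $p=4(n+1)$ in Sobolev together with the doubling $k\mapsto 2k$ in the second Hamilton step, which together force the exponents $(2n+1)/(n+1)$ and $1/(4(n+1))$ to come out exactly as in the statement.
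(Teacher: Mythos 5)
Your proposal is correct and follows essentially the same route as the paper: Sobolev with exponent $4(n+1)$ (the paper equivalently applies it to $f=|\Na^iT|^2$ with $p=2(n+1)$), Kato's inequality, then the two Hamilton interpolations with exactly the parameter choices $(j,k)=(i,2i(n+1))$ and $(2i(n+1),4i(n+1))$ used in the paper. The exponent bookkeeping, including $(4n+2)/(2(n+1))=(2n+1)/(n+1)$, matches the paper's computation.
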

\begin{proof}Choosing $f=|\Na^iT|^2$ and $p=2(n+1)$ in Lemma
\ref{lem:sobolev2}, we have \beq \max_M|\Na^iT|^2\leq
C_S\Big(\int_M\;(|\Na^iT|^{4(n+1)}+|\Na
f|^{2(n+1)})\,\oo^n\Big)^{\frac 1{2(n+1)}}.\nonumber\eeq On
From the Kato's inequality we have \beqs |\Na f|=2|\Na^i T|\cdot |\Na|\Na^i
T\|\leq 2|\Na^i T|\cdot |\Na^{i+1} T|
\leq |\Na^i T|^2 +  |\Na^{i+1} T|^2
,\eeqs
also,
\begin{align}\label{eq012}\max_M|\Na^iT|^2
\leq C\Big(\int_M\;(|\Na^iT|^{4(n+1)}+|\Na^{i+1}
T|^{4(n+1)})\,\oo^n\Big)^{\frac 1{2(n+1)}},\end{align}
where $C=C(n, C_S).$ Letting $k=2(n+1)i$, $j=i$ in the first inequality in Lemma
\ref{lem:Ha82} we have
\beqs \int_M\;|\Na^i T|^{4(n+1)}\,\omega^n&\leq&C\cdot
\max_M|T|^{2k+2}\int_M\;|\Na^{2(n+1)i} T|^2\,\omega^n, \\
 \nonumber\eeqs
while $k=2(n+1)(i+1)$, $j=i+1$, we have
\beqs \int_M\;|\Na^{i+1} T|^{4(n+1)}\,\omega^n&\leq&C\cdot
\max_M|T|^{2k+2}\int_M\;|\Na^{2(n+1)(i+1)} T|^2\,\omega^n. \\
 \nonumber\eeqs
Taking $k=4(n+1)i$ and $j=2(n+1)i$ in the second inequality in Lemma
\ref{lem:Ha82} we have  \beqs \int_M\;|\Na^{2(n+1)i}
T|^{2}\,\oo_1^n&\leq& C\cdot \Big(\int_M\;|\Na^{4(n+1)i}T|^2\Big)^{\frac 12} \Big(\int_M\;|T|^2\,\oo^n\Big)^{\frac
12},\\
 \nonumber\eeqs
 Combining this with
(\ref{eq012}), we have \beqs &&\max_M|\Na^iT|^2\\
&\leq&C\cdot\max_M\;|T|^{\frac
{2n+1}{n+1}}\label{eq013}\Big(\int_M\;(|\Na^{2(n+1)i}T|^{2}+|\Na^{2(n+1)(1+i)}
T|^{2})\,\oo^n\Big)^{\frac 1{2(n+1)}}\\
&\leq&C\cdot\max_M\;|T|^{\frac
{2n+1}{n+1}}\Big(\int_M\;|T|^2\,\oo^n\Big)^{\frac
1{4(n+1)}}\Big(\int_M\;|\Na^{4(n+1)i}T|^2\,\oo^n+\int_M\;|\Na^{4(n+1)(i+1)}T|^2\,\oo^n\Big)^{\frac
1{4(n+1)}},\eeqs The lemma is proved.\\
\end{proof}

Now, we can show the decay of higher order derivatives of the scalar curvature.
\begin{lem}\label{lem:decay2}Given constants $\La, K, T>0$ and a positive function
$\ee(t)$. If
$\oo_t(t\in [0, T])$ is a solution
 of the Calabi flow with \beq |Rm|(t)\leq \La, \quad C_S(t)\leq K,
 \quad Ca(t)\leq \ee(t),\quad \forall\,t\in [0, T], \label{eq1010}\eeq then for any integer $i\geq 1$  and any $t_0 \in (0, T)$  there exists a constant $C=C(t_0, i,  \La, K,  n, V)>0$ such that
$$|\Na^iS|\leq C(t_0, i, K, \La,   n, V)\ee(t)^{\frac 1{8(n+1)}},\quad \forall \;t\in [t_0, T].$$

\end{lem}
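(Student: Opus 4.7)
The plan is to combine the Gagliardo--Nirenberg-type inequality of \lemref{lem:tensor} (applied to the tensor $S - \un S$) with a Shi-type smoothing estimate for the Calabi flow. Since $\Na^i S = \Na^i(S - \un S)$ for $i \geq 1$ and $C_S(\oo_t) \leq K$, \lemref{lem:tensor} yields, for each $t \in [t_0, T]$,
\begin{align*}
\max_M|\Na^i S|^2 \leq C_1 \, \max_M|S - \un S|^{\frac{2n+1}{n+1}} \Big(\int_M (S - \un S)^2 \oo_t^n\Big)^{\frac 1{4(n+1)}} \mathcal I(t)^{\frac 1{4(n+1)}},
\end{align*}
where $C_1 = C_1(K, i, n)$ and
\begin{align*}
\mathcal I(t) = \int_M |\Na^{4(n+1)i} S|^2 \oo_t^n + \int_M |\Na^{4(n+1)(i+1)} S|^2 \oo_t^n.
\end{align*}
The middle factor is $Ca(t) \leq \ee(t)$, and the bound $|Rm|(t) \leq \La$ gives $|S|(t) \leq c(n)\La$, so $\max_M|S - \un S|^{(2n+1)/(n+1)}$ is controlled by $n$ and $\La$ alone. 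Thus the whole matter reduces to bounding $\mathcal I(t)$ uniformly for $t \in [t_0, T]$.

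To bound $\mathcal I(t)$, I would invoke a Shi-type smoothing estimate for the Calabi flow. Starting from the evolution equation \eqref{evorm},
\begin{align*}
\pd{}tRm = -\DD^2 Rm + \Na^2 Rm * Rm + \Na Rm * \Na Rm,
\end{align*}
one runs an induction on $j$ using the $L^2$-energy estimate for $|\Na^j Rm|^2$, together with the uniform bounds $|Rm|(t) \leq \La$ and $C_S(t) \leq K$ on $[0, T]$, to obtain
\begin{align*}
\sup_{M \times [t_0, T]} |\Na^j Rm| \leq C_j(t_0, \La, K, n), \qquad j \geq 0.
\end{align*}
In particular the two integrands appearing in $\mathcal I(t)$ are bounded pointwise on $M \times [t_0, T]$, so $\mathcal I(t) \leq C_2(t_0, i, \La, K, n, V)$. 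Substituting everything into the displayed inequality and extracting the square root gives
\begin{align*}
|\Na^i S|(t) \leq C(t_0, i, \La, K, n, V)\, \ee(t)^{\frac 1{8(n+1)}}, \qquad t \in [t_0, T],
\end{align*}
which is the asserted decay.

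The only nontrivial ingredient is the Shi-type smoothing estimate in the second paragraph, which is where I expect the main obstacle to lie. Because the Calabi flow is fourth order, the usual tensor maximum principle from Ricci flow is unavailable; one instead argues by an integral bootstrap, multiplying the evolution of $|\Na^j Rm|^2$ by time cutoffs vanishing at $t = 0$, integrating by parts, and absorbing top-order terms $|\Na^{j+2} Rm|^2$ using the Sobolev inequality with constant $K$. This is the smoothing mechanism established for the Calabi flow in Chen--He \cite{MR2405167}; modulo this input, the proof of the lemma is bookkeeping on exponents.
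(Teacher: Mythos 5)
Your proposal is correct and follows essentially the same route as the paper: apply \lemref{lem:tensor} to $S-\un S$ so that $Ca(t)\leq\ee(t)$ supplies the decaying factor, and control the remaining high-order integrals by a parabolic smoothing estimate derived from \eqref{evorm} together with $|Rm|\leq\La$ and $C_S\leq K$. The only (harmless) difference is that you ask for pointwise Shi-type bounds on $\Na^jRm$, whereas the paper needs and proves only $L^2$ bounds, via the Chen--He inequality \eqref{lem:chenhe3} and the time-weighted functional $F_k(t)=\sum_{i\le k}t^i\int_M|\Na^iRm|^2\,\oo_t^n$, which yields $\int_M|\Na^kRm|^2\leq C(1+t)/t^k$ on $(0,T]$ --- exactly enough to bound your $\mathcal I(t)$ on $[t_0,T]$.
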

\begin{proof} By Theorem 3.1 in Chen-He \cite{MR2957626}, for any $t\in [0, T]$ we have the inequality
\begin{align}\label{lem:chenhe3}
\pd {}t\int_M\;|\Na^k Rm|^2\,\oo_t^n\leq -\frac 12\int_M\;|\Na^{k+2}
Rm|^2\,\oo_t^n +C\int_M\;|Rm|^2\,\oo_t^n,
\end{align} where
$C=C( k, \La, n)$ is a constant. To estimate the higher order
derivatives of the curvature tensor, we follow the argument in
Chen-He \cite{MR2957626} to define
$$F_k(t)=\sum_{i=0}^k\,t^i\int_M\;|\Na^i Rm|^2(t)\,\oo_t^n.$$
Using Lemma \ref{lem:Ha82}, we have
$$\pd {}tF_k(t)\leq -\frac 14\sum_{i=0}^k\,t^i\int_M\;|\Na^{i+2}Rm|^2\,\oo_t^n+C\int_M\;
|Rm|^2\,\oo_t^n,$$ where $C=C( k, \La,n).$
This implies that \beq
F_k(t)\leq\int_M\;|Rm|^2(0)\;\oo_0^n+C\int_0^t\,dt\int_M\;|Rm|^2(t)\,\oo_t^n
\leq C(k, \La, n, V)(1+t).\nonumber \eeq   Thus, for any integer $k\geq
1$ we have \beq \int_M\;|\Na^k Rm|^2(t)\,\oo_t^n\leq \frac {C(n, k,
 \La, V)(1+t)}{t^k},\quad \forall \,t\in (0, T]. \nonumber\eeq Combining this
with Lemma \ref{lem:tensor} for the function $T=S-\un S$ and integer $i\geq 1$, we have for any $t_0\in (0, T)$,
\beqs
\max_M\;|\Na^i S|^2&\leq &C(i, K, \La, n, V)Ca(t)^{\frac 1{4(n+1)}}\cdot
\Big(\int_M\; |\Na^{4(n+1)i}S|^2\oo_1^n+\int_M\; |\Na^{4(n+1)(i+1)}S|\Big)^{\frac
1{4(n+1)}}\\
&\leq &C(t_0, i, K, \La, n, V)\ee(t)^{\frac 1{4(n+1)}}, \quad \forall
\,t\in [t_0, T].
\eeqs

\end{proof}

A direct corollary of Lemma \ref{lem:decay2} is the following:

\begin{lem}\label{lem:decay1}Given any constants $A>0, c>1, T>0$ and a positive function
$\ee(t)$.
If
$\oo_t(t\in [0, T])$ is a solution
 of the Calabi flow with \beq \frac 1c\oo\leq \oo_t\leq c\oo, \quad
|\varphi(t)|_{C^{4, \al}}\leq A, \quad Ca(t)\leq \ee(t),\quad \forall
t\in [0, T], \label{eq1010}\eeq then  for any integer $i\geq 1$ and any $t_0\in (0, T)$ there exists $C=C(i, c, A, n, V, \oo)>0$ such that
$$|\Na^iS|\leq C( t_0, i, c, A, n, V, \oo)\ee(t)^{\frac 1{8(n+1)}},\quad \forall \;t\in [t_0, T].$$
\end{lem}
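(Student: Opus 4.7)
The plan is to reduce this statement to the previous Lemma \ref{lem:decay2} by verifying that the hypotheses there (a uniform curvature bound and a uniform Sobolev constant bound) are automatic consequences of the assumptions imposed here. The conclusion then follows by direct citation, with the exponent $\tfrac{1}{8(n+1)}$ already produced by Lemma \ref{lem:decay2}, so no new analysis of the Calabi flow itself is needed.

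First, I would observe that the uniform equivalence $\tfrac{1}{c}\omega \leq \omega_t \leq c\,\omega$ together with the bound $|\varphi(t)|_{C^{4,\alpha}} \leq A$ gives a uniform $C^{2,\alpha}$ bound on the metric coefficients $(g_\varphi)_{i\bar j}$ in a fixed atlas coming from $\omega$. Since the Riemann curvature tensor is an algebraic combination of $g_\varphi^{-1}$ and two derivatives of $g_\varphi$, and both factors are controlled by $(c,A,\omega)$, one gets a uniform pointwise bound $|Rm|(t) \leq \Lambda$ for some $\Lambda=\Lambda(c,A,\omega)$ along the entire time interval $[0,T]$.

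Next, I would handle the Sobolev constant. Because $\omega_t$ is uniformly equivalent to the fixed smooth background metric $\omega$ with constants depending only on $c$ and $n$, the volume element, the gradient norm, and the integrals in Lemma \ref{lem:sobolev} all transform by factors bounded above and below. Hence the Sobolev constant of $(M,\omega_t)$ is bounded by a constant $K=K(c,n,\omega)$ times the Sobolev constant of $\omega$. Thus both conditions of \eqref{eq1010} in Lemma \ref{lem:decay2} are met with the constants $\Lambda=\Lambda(c,A,\omega)$ and $K=K(c,n,\omega)$.

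Applying Lemma \ref{lem:decay2} with these values of $\Lambda$ and $K$ then yields, for every integer $i \geq 1$ and every $t_0 \in (0,T)$, the bound
\begin{equation*}
|\nabla^i S|(t) \leq C(t_0,i,\Lambda,K,n,V)\,\epsilon(t)^{\tfrac{1}{8(n+1)}}, \qquad \forall\, t \in [t_0,T].
\end{equation*}
Relabelling the constant in terms of the original parameters gives the claimed dependence $C=C(t_0,i,c,A,n,V,\omega)$. No step looks genuinely delicate here: the only mild subtlety is to be explicit that the $C^{2,\alpha}$ bound on $g_\varphi$ coming from $|\varphi|_{C^{4,\alpha}}\leq A$ really does bound $|Rm|$ pointwise, which is standard. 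So this lemma is essentially an unpackaging of Lemma \ref{lem:decay2} under stronger, potential-level hypotheses.
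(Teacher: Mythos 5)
Your proposal is correct and matches the paper's intent exactly: the paper states Lemma \ref{lem:decay1} as ``a direct corollary'' of Lemma \ref{lem:decay2} with no written proof, and the reduction you carry out — deducing a uniform bound on $|Rm|$ from the $C^{4,\alpha}$ bound on $\varphi$ together with the metric equivalence, and a uniform Sobolev constant from the equivalence $\frac{1}{c}\omega\le\omega_t\le c\,\omega$ — is precisely the unstated verification needed to invoke Lemma \ref{lem:decay2}.
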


\section{The Calabi flow of the K\"ahler potentials}
\label{Calabi flow of the Kahler potentials}

\subsection{Proof of \thmref{theo:main1}}
\begin{proof}[Proof of \thmref{theo:main1}]
For any $\la,  \La, \ee>0$, if the initial metric $\oo_0:=\oo+\pbp
\varphi_0$ satisfies the condition (\ref{eq0018}), then  by Proposition
\ref{short time} there exist constants $\tau=\tau(\La, \la, \omega)>0$
and constants $c=c(\La, \la, \oo)$, $B=B(\La, \la,  \omega)$ such
that \beqs \oo_{\tau}\in \cA(c, B, \ee):=\Big\{
\oo_{\varphi}\;\Big|\;\frac 1c\oo\leq \oo_\varphi\leq
c\oo,\;\;|\varphi|_{C^{4, \al}}\leq B,\;\; Ca(\oo_{\varphi})\leq \ee
\Big\}. \eeqs Similarly, we can also choose $t_0=t_0(\La, \la, \omega)>\tau$
such that $\oo_{t_0}\in \cA(2c, 2B, \ee).$\\

Now we start from the time $\tau$. Suppose \beq
T:=\sup\Big\{t>0\;\Big|\;\oo_s\in \cA(6c, 6B, \ee),\quad \forall
\;s\in [\tau, t)\Big\}<+\infty.\label{eq:T}\eeq
\begin{lem}\label{lem:extend1}There exists  small $\ee=\ee(c, B)>0$
 such that there is a constant $t_0>\tau$ such that the solution satisfies $$\oo_t\in
\cA(3c, 3B, \ee), \quad  \forall\; t\in [t_0, T].$$
\end{lem}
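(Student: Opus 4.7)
My plan is to use the $C^{4,\alpha}$ bound provided by the set $\mathcal{A}(6c,6B,\epsilon)$ on the interval $[\tau,T]$ to activate the exponential decay of the Calabi energy from Lemma~\ref{lem:calabi1}, and then transfer this integral decay to pointwise control of all derivatives of $S-\un S$ via Lemma~\ref{lem:decay1}. Integrating $\dot\varphi=S-\un S$ in time will then show that the $C^{4,\alpha}$ norm of $\varphi(t)$ deviates from its value at the early time $t_0$ (where $\oo_{t_0}\in\mathcal{A}(2c,2B,\epsilon)$) by an amount that is $O(\epsilon^\kappa)$ for a small $\kappa>0$, forcing $\oo_t\in\mathcal{A}(3c,3B,\epsilon)$ as soon as $\epsilon$ is chosen small enough, depending only on $c$ and $B$.

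Concretely, the Calabi energy is nonincreasing along the flow, so $Ca(t)\le Ca(\tau)\le\epsilon$ on $[\tau,T]$. Since $|\varphi(s)|_{C^{4,\alpha}}\le 6B$ for all $s\in[\tau,T]$ by the definition of $T$, and since the Futaki invariant vanishes, Lemma~\ref{lem:calabi1} yields $\partial_t Ca\le -\mu\,Ca$ on this interval with $\mu=\mu(6B,\oo)$ uniform. Integration gives
\begin{equation*}
Ca(t)\le \epsilon\,e^{-\mu(t-\tau)},\qquad t\in[\tau,T].
\end{equation*}
With the metric equivalence $\tfrac{1}{6c}\oo\le\oo_t\le 6c\,\oo$ also in hand, Lemma~\ref{lem:decay1} applies on $[\tau,T]$ and, for every integer $i\ge 1$ and any fixed $t_0'\in(\tau,T)$, gives
\begin{equation*}
|\Na^i S|(t)\le C_i\,\epsilon^{\frac{1}{8(n+1)}}\,e^{-\frac{\mu(t-\tau)}{8(n+1)}},\qquad t\in[t_0',T],
\end{equation*}
where $C_i=C_i(t_0',i,c,B,n,\oo)$. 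A standard mean-value argument combined with $\int_M(S-\un S)\oo_t^n=0$ upgrades this to the same bound on $|S-\un S|$ itself, and applying the estimate at $i=5$ controls the H\"older seminorm of $\Na^4 S$, so that $|S-\un S|_{C^{4,\alpha}}(t)\le C\,\epsilon^{\frac{1}{8(n+1)}}\,e^{-\delta(t-\tau)}$ for some explicit $\delta>0$.

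Finally, fix $t_0>\tau$ as in the short-time step so that $\oo_{t_0}\in\mathcal{A}(2c,2B,\epsilon)$. For $t\in[t_0,T]$, integrating $\dot\varphi=S-\un S$ gives
\begin{equation*}
|\varphi(t)-\varphi(t_0)|_{C^{4,\alpha}}\le \int_{t_0}^{t}|S-\un S|_{C^{4,\alpha}}\,ds \le \frac{C}{\delta}\,\epsilon^{\frac{1}{8(n+1)}}.
\end{equation*}
Hence $|\varphi(t)|_{C^{4,\alpha}}\le 2B+C'\epsilon^{\frac{1}{8(n+1)}}\le 3B$ once $\epsilon=\epsilon(c,B)$ is sufficiently small; the $C^{1,1}$ part of the same estimate, combined with the metric equivalence for $\oo_{t_0}$, upgrades to $\tfrac{1}{3c}\oo\le\oo_t\le 3c\,\oo$; and $Ca(t)\le\epsilon$ is automatic. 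The main technical obstacle is making sure that all the constants depend only on $c,B$ and not on $T$; this succeeds precisely because the exponential time decay makes the controlling integral $\int_{t_0}^{T}e^{-\delta(t-\tau)}\,dt$ bounded independently of $T$.
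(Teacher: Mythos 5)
Your proposal is correct and follows essentially the same route as the paper: exponential decay of the Calabi energy from Lemma~\ref{lem:calabi1} under the $C^{4,\alpha}$ bound on $[\tau,T]$, pointwise decay of $|\Na^i S|$ from Lemma~\ref{lem:decay1}, and time-integration to control both the metric equivalence and the $C^{4,\alpha}$ norm of $\varphi$ starting from $\oo_{t_0}\in\cA(2c,2B,\ee)$, with $T$-independence coming from the convergent exponential integral. The only (cosmetic) difference is that you integrate $\dot\varphi=S-\un S$ directly in the $C^{4,\alpha}$ norm, whereas the paper runs a logarithmic Gronwall inequality on $1+|\Na^i\varphi|$; both rest on the same smallness of $\int_{t_0}^T|\Na^iS|\,dt=O(\ee^{1/(8(n+1))})$.
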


\begin{proof}
Since the solution $\varphi(t)$ satisfies
$|\varphi(t)|_{C^{4,\alpha}}\leq 6B$ for $t\in [\tau, T)$ and the Futaki invariant
vanishes, by Lemma \ref{lem:calabi1} the Calabi energy decays
exponentially
$$\pd {}tCa(t)\leq -\mu\;Ca(t),\quad t\in [\tau, T]$$
where $\mu=\mu(6B, \oo)$. It follows that $Ca(t)\leq \ee e^{-\mu
(t-\tau)}$ for any $t\in [\tau, T].$ Combining this with Lemma
\ref{lem:decay1}, the higher order derivatives of the scalar curvature have the estimates  \beq
|\Na^iS|(t)\leq C_1(t_0,i,6c, 6B, n,  \oo)\ee^{\frac 1{8(n+1)}}
e^{-\frac {\mu }{ 8(n+1)}(t-t_0) },\quad \forall\;t\in [t_0, T],
\label{eq011}\eeq
where $t_0$ is chosen in step (1).
By the equation of Calabi flow, we have
$$\Big|\pd {}t\oo_t\Big|\leq |\Na\bar\Na S|\leq C_1(t_0, 6c, 6B,  n,  \oo )
\ee^{\frac 1{8(n+1)}} e^{-\frac {\mu }{ 8(n+1)}(t-t_0)}, \quad
\forall\;t\in [t_0, T].$$ It follows that
$$e^{-C_2(t_0,c, B, n,   \oo) \ee^{\frac 1{8(n+1)}} }\oo_{t_0}\leq
\oo_t\leq e^{C_2(t_0,c, B, n,   \oo)  \ee^{\frac
1{8(n+1)}}}\oo_{t_0}, \quad \forall\;t\in [t_0, T].$$ If we choose
$\ee$ sufficiently small, then
$$\frac 1{3c}\oo\leq \oo_t\leq 3c\oo,\quad \forall\;t\in [t_0, T].$$
Now we estimate $|\varphi|_{C^{4, \al}}.$ In fact, along the Calabi
flow we have \beq \pd {}t|\Na^i\varphi|^2\leq c(n)(|\Na\bar\Na
S|\cdot |\Na^i\varphi|^2+|\Na^iS|\cdot |\Na^i\varphi|),
\label{eq110a}\eeq where $c(n)$ is a constant depending only on $n$.
Note that (\ref{eq110a}) can be written as
$$\pd {}t\log(1+|\Na^i\varphi|)\leq c(n)(|\Na\bar\Na S|+|\Na^i S|).$$
 Using the estimate
(\ref{eq011}), we have \beq 1+|\Na^i\varphi|(t)\leq e^{C(t_0,i,6c,
6B, n, V, \oo)  \ee^{\frac 1{8(n+1)}}} (1+|\Na^i\varphi|(t_0)),\quad
\forall \;t\in [t_0, T].\label{eq014}\eeq Since
$\varphi(t_0)\in \cA(2c, 2B, \ee)$,
 the estimates (\ref{eq014}) imply $|\varphi|_{C^{4,
\al}}(t)\leq 3B(t\in [t_0, T])$ for sufficiently small $\ee$. The
lemma is
proved.\\
\end{proof}

By Lemma \ref{lem:extend1}, we can extend the solution $\oo_t$ to
$[0, T+\dd]$  for some $\dd=\dd(c, B)>0$ such that
$$\oo_t\in \cA(6c, 6B,  \ee),\quad \forall\; t\in (t_0, T+\dd],$$
which contradicts (\ref{eq:T}).
 Therefore,  all derivatives of $\varphi$ are uniformly bounded  for all time $t>0$
 by Lemma
\ref{short time} and the Calabi energy decays exponentially by Lemma
\ref{lem:calabi1}. Thus, the Calabi flow converges exponentially
fast to a constant scalar curvature metric. The theorem is proved.

\end{proof}

Similarly, using Lemma \ref{lem:calabi1ex} we have the following
result for the modified Calabi flow. The proof is paralleling to that
of Theorem \ref{theo:main1}, and we omit the details here.

\begin{thm}\label{theo:main1ext}
Let $(M, \oo)$ be a compact K\"ahler manifold  with vanishing
modified Futaki invariant. For any $\la, \La>0$, there is a
 constant $\ee=\ee(\la, \La,\om)$ such
that for any $K$-invariant metric $\oo_{\varphi}\in [\oo]$ satisfying \beq
\oo_{\varphi}\geq \la\,\oo,\quad |\varphi|_{C^{3,\al}}\leq \La,
\quad \td Ca(\oo_{\varphi})\leq \ee,  \eeq   the modified Calabi
flow with the initial metric $\oo_{\varphi}$ exists for all time and
converges exponentially fast to an extremal K\"ahler metric.

\end{thm}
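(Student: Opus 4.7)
The plan is to run the bootstrap argument of \thmref{theo:main1} with Calabi flow, scalar curvature, Calabi energy, and \lemref{lem:calabi1} replaced throughout by the modified Calabi flow \eqref{mcf}, the modified scalar curvature $\td S := S - \un S - \te_{\Re V}$, the modified Calabi energy $\td Ca$, and \lemref{lem:calabi1ex}. First I would apply Proposition~\ref{short time} on the $K$-invariant subspace $\cH_K(M,\oo)$ (short-time existence for \eqref{mcf} is the same quasilinear fourth-order theory, and $K$-invariance is preserved by uniqueness) to produce $\tau > 0$ together with constants $c = c(\la, \La, \oo)$ and $B = B(\la, \La, \oo)$ such that
\begin{align*}
\oo_\tau \in \td{\cA}(c, B, \ee) := \Big\{\oo_\psi \,\Big|\, \tfrac{1}{c}\oo \leq \oo_\psi \leq c\oo,\;\; |\psi|_{C^{4,\al}} \leq B,\;\; \td Ca(\oo_\psi) \leq \ee\Big\},
\end{align*}
and to pick $t_0 > \tau$ with $\oo_{t_0} \in \td{\cA}(2c, 2B, \ee)$. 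Then I would set
\begin{align*}
T := \sup\{\,t > \tau : \oo_s \in \td{\cA}(6c, 6B, \ee) \text{ for all } s \in [\tau, t)\,\}
\end{align*}
and argue by contradiction that $T = +\infty$.

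Assuming $T < \infty$, the crucial intermediate step mirrors \lemref{lem:extend1}: for $\ee$ small enough depending only on $c$ and $B$, the solution in fact lies in $\td{\cA}(3c, 3B, \ee)$ on $[t_0, T]$. Since the modified Futaki invariant vanishes and $|\psi(t)|_{C^{4,\al}} \leq 6B$ on $[\tau, T]$, \lemref{lem:calabi1ex} yields $\td Ca(t) \leq \ee\, e^{-\mu(t-\tau)}$ with $\mu = \mu(6B, \oo)$. Feeding this into the modified-flow analogue of \lemref{lem:decay1} gives
\begin{align*}
|\Na^i \td S|(t) \leq C(t_0, i, c, B, n, V, \oo)\, \ee^{\frac{1}{8(n+1)}}\, e^{-\frac{\mu}{8(n+1)}(t-t_0)}, \quad t \in [t_0, T],
\end{align*}
and integrating $\partial_t \oo_\psi = \pbp \td S$ together with the induced inequality $\partial_t|\Na^i \psi|^2 \leq c(n)(|\pbp \td S|\cdot|\Na^i \psi|^2 + |\Na^i \td S|\cdot|\Na^i \psi|)$ confines $\oo_t$ to $\td{\cA}(3c, 3B, \ee)$ on $[t_0, T]$ once $\ee$ is taken small. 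Short-time existence applied at a time close to $T$ then extends $\oo_t$ beyond $T$ within $\td{\cA}(6c, 6B, \ee)$, contradicting the definition of $T$. Hence $T = +\infty$, all $|\psi|_{C^{k,\al}}$ are uniformly bounded, and the exponential decay of $\td Ca$ supplied by \lemref{lem:calabi1ex} forces exponential convergence of $\oo_\psi$ to an extremal K\"ahler metric.

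The main obstacle is to port \lemref{lem:decay1} cleanly from the Calabi flow to the modified Calabi flow. The curvature interpolation argument driving it, which uses the Chen--He energy $F_k(t) := \sum_{i=0}^{k} t^i \int |\Na^i Rm|^2\, \oo_t^n$ together with the Hamilton-type inequalities of \lemref{lem:Ha82}, is written for $\partial_t g_{i\bar j} = S_{i\bar j}$, whereas in the modified flow an extra drift $-\pbp \te_{\Re V}(\psi)$ appears in the evolution of the metric and hence of the curvature. Because $V \in \frak h_0(M)$ is fixed and $|\psi|_{C^{4,\al}}$ stays under control throughout the bootstrap, the $C^k$ norms of this drift are uniformly bounded and enter the differential inequalities satisfied by $F_k$ only as a harmless lower-order perturbation; nevertheless, tracking the constants carefully so that the decay rate $\ee^{1/(8(n+1))}$ survives is the most delicate bookkeeping. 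Everything else---the $K$-invariance of the flow, the eigenvalue estimate \lemref{lem:key1} which yields \lemref{lem:calabi1ex} via the vanishing modified Futaki invariant, and the short-time theory of Proposition~\ref{short time}---is already in place.
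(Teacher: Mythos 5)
Your proposal is correct and follows exactly the route the paper intends: the paper itself gives no separate argument for Theorem \ref{theo:main1ext}, stating only that "the proof is paralleling to that of Theorem \ref{theo:main1}" with Lemma \ref{lem:calabi1ex} in place of Lemma \ref{lem:calabi1}, which is precisely the substitution you carry out. Your extra remark about controlling the drift term $\pbp\,\te_{\Re V}(\psi)$ in the decay estimates is a legitimate detail the paper glosses over (one could alternatively transfer the curvature bounds via the pullback relation $\sigma(t)^{\ast}\om_\psi=\om_\vphi$, since curvature norms are invariant under the holomorphic automorphisms), but it does not change the structure of the argument.
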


\subsection{Proof of \thmref{theo:main1a} and Corollary \ref{cor:main}}

\begin{proof}[Proof of Theorem \ref{theo:main1a}]
 By the assumptions of Theorem \ref{theo:main1a},
the metric $\oo_{0}=\oo+\pbp\varphi_0$ satisfies the following
conditions \beq  Ric(\oo_0)\geq -\la \oo_0,\quad
\|S\|_{L^p(\oo_0)}\leq \La, \quad C_S(\om_0)\leq K,  \quad
\nu_{\oo}(\oo_0)\leq \inf_{\oo'\in \Om}\nu_{\oo}(\oo')+\ee.
\label{eq401}\eeq Therefore, according to \thmref{compactness} the
corresponding K\"ahler potential \beq |\vphi_0|_{C^{3,\al}}\leq
C(\la, \La, K,\oo),\quad \oo_{\varphi_0}\geq c\, \oo>0.
\label{eq402}\eeq We use the Calabi flow to smooth this
K\"ahler potential. Let $\vphi(t)$ be the Calabi flow with
the initial data $\varphi_0$. According to Proposition \ref{short time},
there exists $t_0>0$ such that $|\vphi(t_0)|_{C^{4,\al}}$ is bounded by a constant
depending on $\om$ and $|\vphi_0|_{C^{3,\alpha}}$. Moreover,
$\varphi(t_0)$ satisfies
$$\oo_{\varphi(t_0)}\geq \tilde c\cdot \oo$$ for some constant $\tilde c>0$. Since
the $K$-energy is decreasing along the Calabi flow,
 the $K$-energy of $\vphi(t_0)$ is not larger than $ \inf_{\oo'\in
\Om}\nu_{\oo}(\oo')+\ee$.

  In order to apply Theorem \ref{theo:main1}, we need
to show the Calabi energy of $\varphi(t_0)$ is small. This follows
from the next lemma.
\begin{lem}
Assume that $\vphi$ satisfies $|\vphi|_{C^{4,\al}}\leq B$ for a
fixed constant $B$. Then for any constant $\delta>0$, there is
positive constant $\eps$ depending on $B$ and $\delta$ such that if
$\nu_{\oo}(\oo_0)\leq \inf_{\oo'\in \Om}\nu_{\oo}+\ee$, then
$Ca(\vphi)\leq\delta$.
\end{lem}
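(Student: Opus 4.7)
My plan is a contradiction argument built on compactness and the first variation of the $K$-energy. Suppose the lemma is false: then there exist $\delta>0$ and a sequence $\vphi_k\in\cH(M,\oo)$ with $|\vphi_k|_{C^{4,\al}}\leq B$, $\nu_\oo(\vphi_k)\to\inf_{\vphi\in\cH}\nu_\oo(\vphi)$, but $Ca(\vphi_k)\geq \delta$ for every $k$.

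By Arzel\`a--Ascoli, after passing to a subsequence, $\vphi_k\to\vphi_\infty$ in $C^{4,\al'}$ for any $\al'<\al$, with $|\vphi_\infty|_{C^{4,\al}}\leq B$. In the intended application (within the proof of \thmref{theo:main1a}) one also has the uniform lower bound $\oo_{\vphi_k}\geq \tilde c\,\oo$ inherited from the short-time estimates of the Calabi flow, so the limit $\vphi_\infty$ satisfies $\oo_{\vphi_\infty}\geq \tilde c\,\oo$ and defines a genuine K\"ahler metric. From the explicit formulae \eqref{k energy}, \eqref{eq:D}, \eqref{eq:j} and the continuous dependence of the scalar curvature on two derivatives of the potential, both $\nu_\oo(\cdot)$ and $Ca(\cdot)$ are continuous under $C^{4,\al'}$-convergence of such nondegenerate potentials. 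Hence $\nu_\oo(\vphi_\infty)=\lim_k\nu_\oo(\vphi_k)=\inf_{\cH}\nu_\oo$ and $Ca(\vphi_\infty)=\lim_k Ca(\vphi_k)\geq \delta$.

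The first identity makes $\vphi_\infty$ a minimizer of $\nu_\oo$ on $\cH(M,\oo)$, so for every smooth test function $f$,
\[
0=\frac{d}{dt}\bigg|_{t=0}\nu_\oo(\vphi_\infty+tf)=-\int_M f\,\bigl(S(\vphi_\infty)-\un S\bigr)\,\oo_{\vphi_\infty}^n.
\]
This forces $S(\vphi_\infty)=\un S$, hence $Ca(\vphi_\infty)=0$, contradicting $Ca(\vphi_\infty)\geq \delta$.

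The crux of the argument is the compactness step: one needs to know that $\oo_{\vphi_\infty}$ is a nondegenerate K\"ahler form, so that the first-variation computation of $\nu_\oo$ is legitimate and both $\nu_\oo$ and $Ca$ pass to the limit along the sequence. This is not automatic from $|\vphi_k|_{C^{4,\al}}\leq B$ alone, but is secured in the application by the positivity $\oo_{\vphi(t_0)}\geq \tilde c\,\oo$ produced by Proposition \ref{short time}; the lemma should be read in that context. Once such uniform positivity is granted, the remaining continuity and first-variation steps are routine.
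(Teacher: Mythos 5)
Your proposal is correct and follows essentially the same route as the paper: a contradiction argument, $C^{4,\al'}$-compactness of the sequence, continuity of $\nu_\oo$ and $Ca$, and the observation that a minimizer of the $K$-energy is cscK (which the paper asserts directly and you justify via the first variation). Your remark that the nondegeneracy $\oo_{\vphi}\geq \tilde c\,\oo$ must be supplied by the application is accurate; the paper's own proof silently adds the hypothesis $\oo_{\varphi_i}\geq c\,\oo$ to its contradiction sequence for exactly this reason.
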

\begin{proof}
We argue this by contradiction. Suppose not, there are $\dd_0>0$ and
a sequence of positive constants $\ee_i\ri 0$ and $\varphi_i$
satisfying
$$Ca(\varphi_i)> \dd_0>0, \quad |\vphi_i|_{C^{4,\al}}\leq B,  \quad \oo_{\varphi_i}
\geq c\,\oo, \quad \nu_{\oo}(\oo_i)\leq \inf_{\oo'\in
\Om}\nu_{\oo'}+\ee_i.$$ Thus, we can take a subsequence of
$\varphi_i$ such that $\varphi_i\ri \varphi_{\infty}$ in $C^{4,
\al'}$ for some $\al'<\al$ with
$\nu_{\oo}(\oo_{\infty})=\inf_{\oo'\in \Om}\nu_{\oo'}$. Thus,
$\oo_{\infty}$ is a $C^{4,\al'}$ cscK metric with
$Ca(\varphi_{\infty})\geq \dd_0>0$, which is impossible.
\end{proof}
Therefore, we apply \thmref{theo:main1} to obtain Theorem
\ref{theo:main1a}.

\end{proof}

Using the "modified" version of Theorem \ref{compactness},
the proof of Theorem \ref{theo:main1a} also works for the
modified Calabi flow and extK metrics. Thus, we have the result:

\begin{theo}\label{theo:main1aext}
Let $(M, \oo)$ be a compact K\"ahler manifold for which the modified
$K$-energy is $I$-proper.
 For any constants $\la,  \La, Q>0$ and $p>n$, there is a constant $\ee=\ee(\la,
 \La, Q, p,
 \om)$ such
that if there exists a $K$-invariant metric $\oo_{0}\in [\oo]$ satisfying the
following conditions
\begin{align*}
& Ric(\oo_0)\geq -\la \,\oo_0,\quad
\|S(\om_0)\|_{L^p(\oo_0)}\leq \La, \quad C_S(\om_0)\leq Q,  \quad\tilde\nu_{\oo}(\oo_0)\leq \inf_{\oo'\in
\Om}\tilde\nu_{\oo}(\oo')+\ee,
\end{align*}
then the modified Calabi flow with the  initial metric
$\oo_0$ exists for all time and converges exponentially fast to an
extK metric.\\
 \end{theo}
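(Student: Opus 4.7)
The plan is to transpose the proof of Theorem~\ref{theo:main1a} almost verbatim to the $K$-invariant / modified setting, replacing each ingredient by the modified counterpart already assembled in this paper: the compactness theorem becomes Theorem~\ref{compactness extremal}, the stability theorem at the end becomes Theorem~\ref{theo:main1ext}, the $K$-energy becomes the modified $K$-energy $\td\nu_\oo$, and the Calabi energy becomes $\td Ca$. The whole argument is a smoothing--contradiction--stability chain.

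First I would feed the hypotheses $Ric(\oo_0)\geq -\la\oo_0$, $\|S(\oo_0)\|_{L^p(\oo_0)}\leq \La$, $C_S(\oo_0)\leq Q$, together with the $K$-invariance of $\oo_0$ and the bound $\td\nu_\oo(\oo_0)\leq \inf_{\oo'\in\Om}\td\nu_\oo(\oo')+\ee$ (which in particular bounds $\td\nu_\oo(\oo_0)$ from above), into Theorem~\ref{compactness extremal}. This yields a uniform $C^{3,\alpha}$ bound $|\varphi_0|_{C^{3,\alpha}}\leq A$ and a uniform lower bound $\oo_{\varphi_0}\geq c\,\oo$ with $A,c$ depending only on $\la,\La,Q,p,\oo$. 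Proposition~\ref{short time} (applied with $l=3$) then runs the modified Calabi flow on some interval $[0,t_0]$, $t_0=t_0(\la,\La,Q,p,\oo)>0$, producing a $K$-invariant potential $\varphi(t_0)$ with $|\varphi(t_0)|_{C^{4,\alpha}}\leq 2A$ and $\oo_{\varphi(t_0)}\geq (c/2)\,\oo$. Since the modified $K$-energy is monotone non-increasing along the modified Calabi flow, $\td\nu_\oo(\varphi(t_0))\leq \inf\td\nu_\oo+\ee$ still holds.

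The crux is to verify that $\td Ca(\varphi(t_0))$ can be made arbitrarily small by shrinking $\ee$. I would argue by contradiction exactly as in the Calabi-energy lemma inside the proof of Theorem~\ref{theo:main1a}: if not, there exist $\dd_0>0$ and a sequence of $K$-invariant potentials $\varphi_i$ with $|\varphi_i|_{C^{4,\alpha}}\leq 2A$, $\oo_{\varphi_i}\geq (c/2)\oo$, $\td Ca(\varphi_i)\geq \dd_0$, and $\td\nu_\oo(\varphi_i)\to \inf\td\nu_\oo$. Extract a $C^{4,\alpha'}$ subsequential limit $\varphi_\infty$ with $\alpha'<\alpha$; then $\varphi_\infty$ is a $K$-invariant $C^{4,\alpha'}$ minimizer of $\td\nu_\oo$ and hence satisfies the Euler--Lagrange equation $S(\oo_{\varphi_\infty})-\underline S-\theta_{\Re V}(\varphi_\infty)=0$ in the classical sense, so $\td Ca(\varphi_\infty)=0$, contradicting $\td Ca(\varphi_\infty)\geq \dd_0$. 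Once $\td Ca(\varphi(t_0))$ is known to be small, Theorem~\ref{theo:main1ext} applies to the initial metric $\oo_{\varphi(t_0)}$ and delivers long-time existence together with exponential convergence to an extK metric.

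The principal obstacles are twofold. First, Theorem~\ref{theo:main1ext} requires the modified Futaki invariant to vanish; this must be extracted from the $I$-properness hypothesis, for instance by running the same minimizing-sequence argument without any a priori regularity bound to obtain a weak extremal minimizer whose existence in turn forces $\td F\equiv 0$ on $\mathfrak h$. Second, in the contradiction step the $C^{4,\alpha'}$ limit $\varphi_\infty$ must genuinely satisfy the Euler--Lagrange equation classically; $C^{4,\alpha'}$ regularity is borderline but sufficient, using that $\theta_{\Re V}(\varphi)$ depends continuously on $\varphi$ in $C^{4,\alpha'}$ since $\Re V$ is a fixed smooth real holomorphic vector field. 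Apart from these two issues, the proof is a mechanical translation of the proof of Theorem~\ref{theo:main1a}, with Lemma~\ref{lem:proper2} taking the place of Lemma~\ref{lem:proper} wherever needed.
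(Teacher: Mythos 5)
Your proposal matches the paper's approach: the paper disposes of this theorem by asserting that the proof of Theorem~\ref{theo:main1a} carries over verbatim once Theorem~\ref{compactness extremal} and Theorem~\ref{theo:main1ext} replace their unmodified counterparts, which is exactly the translation you carry out (smoothing via Proposition~\ref{short time}, the contradiction lemma for smallness of $\td Ca$, then the stability theorem). The two ``obstacles'' you flag are not genuine: the modified Futaki invariant relative to the extremal vector field vanishes automatically by the construction of $V$ through the projection $\pr$ (so that $S-\underline S-\theta_{\Re V}$ is orthogonal to all Hamiltonians $\theta_X$), and the $C^{4,\alpha'}$ Euler--Lagrange step is accepted at exactly the same level of regularity in the paper's own proof of Theorem~\ref{theo:main1a}.
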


Combining Theorem \ref{compactness} with Theorem \ref{theo:main1},
we have

\begin{proof}[Proof of Corollary \ref{cor:main}] Since the
$K$-energy is decreasing along the Calabi flow, all the conditions
in Theorem \ref{compactness} are satisfied for the evolving metrics.
By Proposition  \ref{short time} the Calabi flow has long time solution and
 there exist uniform constants $\la, C>0$ such that for any $t\in (t_0, \infty)(t_0>0),$
\beq \oo_t\geq \la\,\oo, \quad |\varphi(t)|_{C^{k, \al}}\leq C. \label{eqb1}\eeq

We claim that $\lim_{t\ri +\infty}Ca(t)=0$. In fact, by (\ref{eqb1}) there is a
sequence of metrics $\oo_{t_i}(t_i\ri+\infty)$ converging to a limit metric
$\oo_{\infty}$ smoothly and $\oo_{\infty}$ satisfies
$$\int_M\;|\Na\Na S(\oo_{\infty})|^2\;\oo_{\infty}^n=0.$$
Therefore, $\oo_{\infty}$ is an extK metric. Since the $K$-energy is
proper, the Futaki invariant vanishes and
$\oo_{\infty}$ is a cscK metric. Since the Calabi energy is
decreasing along the flow, we have $\lim_{t\ri+\infty}Ca(t)=0.$

Thus, the Calabi energy is sufficiently small when $t$ is large
enough. By Theorem \ref{theo:main1}, the flow converges exponentially fast to a cscK
metric. Similar arguments also work for the modified Calabi flow,
and we omit the details here.\\
\end{proof}

The proof of Corollary \ref{cor:main} also works for the modified
Calabi flow and extremal K\"ahler metrics. Here we state the result
and omit its proof.
\begin{cor}\label{cor:mainex}
Let $(M, \Om)$ be an $n$-dimensional compact K\"ahler manifold for
which the modified $K$-energy is $I$-proper in the K\"ahler class
$\Om.$  If the $L^p$-norm of the  scalar curvature for
some $p>n$, the Sobolev constant and the lower bound of Ricci
curvature are uniformly bounded along the flow, then the modified
Calabi flow with a K-invariant initial metric converges exponentially fast to an extK
metric.\\

\end{cor}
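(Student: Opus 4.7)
The plan is to mirror the argument for Corollary \ref{cor:main}, substituting the modified versions of every input. The modified $K$-energy $\tilde\nu_\om$ is decreasing along the modified Calabi flow (this is the content of \eqref{epo} combined with \eqref{mcf}), and $K$-invariance is preserved, so a solution $\om_t$ starting from a $K$-invariant metric stays in $\mathcal M_{\inv}$ and has uniformly bounded $\tilde\nu_\om$. Together with the hypotheses on the $L^p$-norm of the scalar curvature, the Sobolev constant, and the lower Ricci bound, all four bullets of Theorem \ref{compactness extremal} are verified along the flow; that theorem then gives uniform $C^{1,\alpha}$ control on the potentials $\psi(t)$ and, in particular, a uniform constant $\lambda>0$ with $\om_t\ge \lambda\om$.

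Next I would upgrade this to uniform higher regularity for $t$ bounded away from $0$. With $\psi(t)$ bounded in $C^{1,\alpha}$ and $\om_t\ge\lambda\om$, the short-time estimates of Proposition \ref{short time}, applied with a shifted initial time, yield a uniform constant $A$ with $|\psi(t)|_{C^{4,\alpha}}\le A$ for all $t\ge t_0$, for some fixed $t_0>0$. Thus for every sequence $t_i\to\infty$ a subsequence of $\om_{t_i}$ converges smoothly to a $K$-invariant limit metric $\om_\infty$. Because the modified Calabi energy $\tilde Ca(\om_t)$ is monotone non-increasing and bounded below by $0$, its time derivative is integrable, so we may choose $t_i$ with $\int_M|\nabla\bar\nabla\dot\psi(t_i)|^2\,\om_{t_i}^n\to 0$, forcing $\om_\infty$ to satisfy $\nabla\bar\nabla(S(\om_\infty)-\underline S-\theta_{\Re V}(\om_\infty))\equiv 0$ and hence to be extremal. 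The $I$-properness assumption rules out collapsing and ensures the limit lives in the class $\Omega$.

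With an extK limit in hand, monotonicity of $\tilde Ca$ forces $\lim_{t\to\infty}\tilde Ca(\om_t)=0$ (otherwise, by the smooth subconvergence argument above, any limit would be extK with strictly positive modified Calabi energy, a contradiction). Pick $T$ large so that $\om_T$ satisfies $\om_T\ge \tfrac\lambda2\om$, $|\psi(T)|_{C^{3,\alpha}}\le 2A$, and $\tilde Ca(\om_T)\le\varepsilon$, where $\varepsilon=\varepsilon(\lambda/2,2A,\om)$ is the threshold supplied by Theorem \ref{theo:main1ext}. Restarting the modified Calabi flow from $\om_T$, Theorem \ref{theo:main1ext} delivers long-time existence and exponential convergence to an extK metric, which completes the proof.

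The main obstacle is the step identifying $\om_\infty$ as extremal and in the right K\"ahler class. One must be careful that the $C^{1,\alpha}$ compactness from Theorem \ref{compactness extremal} is enhanced to smooth subconvergence (via Proposition \ref{short time}) before extracting the subsequence on which $\int|\nabla\bar\nabla\dot\psi|^2\to 0$, and that the $I$-properness of the modified $K$-energy (which uses $K$-invariance in an essential way, cf.\ Lemma \ref{lem:proper2}) prevents any escape to a degenerate limit. Once this is in place, the reduction to Theorem \ref{theo:main1ext} is routine.
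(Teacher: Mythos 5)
Your proposal is correct and follows essentially the same route as the paper, which proves Corollary \ref{cor:main} by combining the compactness theorem with Proposition \ref{short time} to get smooth subconvergence, identifying the limit as a critical metric via the integrability of $\frac{d}{dt}Ca$, concluding $\lim_{t\to\infty}Ca(t)=0$ by monotonicity, and then invoking the small-energy stability theorem; the paper explicitly states that this argument carries over verbatim to the modified flow, which is exactly what you have written out. The only cosmetic remark is that the compactness theorem's proof actually yields $C^{3,\alpha}$ bounds on the potential (as the paper uses in the proof of Theorem \ref{theo:main1a}), which is what makes the application of Proposition \ref{short time} legitimate.
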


On Fano surfaces the conditions on Sobolev constant in Corollary
\ref{cor:main} can be removed.  It was
 observed in Tian-Viaclovsky \cite{MR2166311} that the Sobolev
  constant of the cscK metric is essentially bounded by the positive Yamabe
  invariant. The Yamabe invariant is positive when the K\"ahler
  class $\Om$ stays in the interior of Tian's cone
\begin{align*}
3C_1^2>2\frac{(C_1 \cdot \Om)^2}{\Om^2}.
\end{align*}
Here $C_1$ denotes the first Chern class $C_1(M)$ for simplicity.
This idea was generalized to the extK metrics (cf. Chen-Weber \cite{MR2737786} and
 Chen-Lebrun-Weber \cite{MR2425183}) in generalized Tian's cone
\begin{align*}
48\pi^2 C_1^2>A(\Om):=32\pi^2(C_1^2+\frac{1}{3}\frac{(C_1\cdot \Om)^2}{\Om^2})+\frac{1}{3} \| \mathcal F \|^2.
\end{align*}
The last term is the norm of the Calabi-Futaki invariant \cite{MR2471594}\cite{MR1314584}.
Chen-He \cite{MR2644371} proved in Lemma 2.3 that if the minimum of the extremal Hamiltonian potential of the all $K$-invariant K\"ahler metric is positive and the Calabi energy of the initial $K$-invariant K\"ahler metric is less than $A(\Om)$,
then the Sobolev constant is bounded.
That shows that on Fano surfaces, the third condition in our \thmref{compactness} could be verified when $Ca<A(\Om)$.
Combining   the Sobolev constants with the classification of the blowing up bubbles,  Chen-He \cite{MR2644371} \cite{MR2957626} proved the convergence
  of the Calabi flow on the toric Fano surfaces under the restriction on the initial Calabi energy.
Our next
corollary shows that the curvature
 obstructions of the convergence are  the $L^p$-norm of the scalar curvature and
 the lower bound of the Ricci curvature.

\begin{cor}\label{cor:main2}
On a compact Fano surface for which the modified $K$-energy is
$I$-proper. Assume that the initial K\"ahler metric is $K$-invariant
and has Calabi energy less than $A(\Om)$. If the $L^p$-norm of the
scalar curvature for some $p>2$ and the lower bound of the
Ricci curvature are uniformly bounded along the flow, then the
Calabi flow converges exponentially fast to an extK metric.
\end{cor}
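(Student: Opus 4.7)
The plan is to reduce Corollary \ref{cor:main2} to Corollary \ref{cor:mainex} by establishing a uniform Sobolev constant along the flow. The two curvature conditions required by Corollary \ref{cor:mainex} — namely, a uniform $L^p$-bound on the scalar curvature for some $p > n = 2$ and a uniform lower bound on the Ricci curvature — are directly assumed in the statement of Corollary \ref{cor:main2}, and the modified $K$-energy is postulated to be $I$-proper. Hence the only missing ingredient for invoking Corollary \ref{cor:mainex} is the uniform bound on the Sobolev constant.

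To obtain this, I would first use the monotonicity of the Calabi energy along the Calabi flow,
\[
\frac{d}{dt} Ca(\omega_t) = -\int_M |\nabla \bar\nabla S|^2 \, \omega_t^n \leq 0,
\]
to propagate the strict inequality $Ca(\omega_0) < A(\Omega)$ forward in time, so that $Ca(\omega_t) < A(\Omega)$ for every $t$ at which the flow exists. Since the Calabi flow preserves $K$-invariance, I could then invoke Lemma 2.3 of Chen--He \cite{MR2644371}: on a Fano surface whose extremal Hamiltonian potential has positive minimum over all $K$-invariant metrics in $\Omega$, any $K$-invariant K\"ahler metric with Calabi energy strictly less than $A(\Omega)$ has its Sobolev constant controlled by $A(\Omega) - Ca$ and the background geometry. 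Applied pointwise in $t$, this yields a uniform Sobolev bound along the entire flow.

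With the Sobolev constant now in hand, together with the $L^p$-control on $S$ and the Ricci lower bound that are assumed along the flow, I would apply Corollary \ref{cor:mainex} to conclude that the (modified) Calabi flow converges exponentially fast to an extK metric, which is the desired conclusion.

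The main point requiring care is the auxiliary positivity hypothesis in Chen--He's lemma: one must verify that on the given Fano surface, and for the given K\"ahler class $\Omega$, the minimum of the extremal Hamiltonian potential over $K$-invariant metrics is positive. This is the part that is not purely formal: it is automatic when $\Omega$ lies in the generalized Tian cone underlying the definition of $A(\Omega)$ (which is the natural setting for Chen--Weber \cite{MR2737786} and Chen--LeBrun--Weber \cite{MR2425183}), and in the toric situation of \cite{MR2644371,MR2957626} it has been checked explicitly. Once this positivity is in place, the rest of the argument amounts to assembling the monotonicity of $Ca$, the Chen--He Sobolev estimate, and the previously proved Corollary \ref{cor:mainex}.
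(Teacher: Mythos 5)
Your proposal is correct and follows essentially the same route as the paper, which derives this corollary from the preceding discussion by invoking Lemma~2.3 of Chen--He \cite{MR2644371} (together with the monotonicity of the Calabi energy) to supply the missing uniform Sobolev bound and then applying Corollary~\ref{cor:mainex}. Your remark that the positivity of the minimum of the extremal Hamiltonian potential is an auxiliary hypothesis of Chen--He's lemma, left implicit in the statement of the corollary, is an accurate and worthwhile observation.
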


\section{Calabi flow of the K\"ahler metrics}\label{Calabi flow of the Kahler metrics}
%\section{Proof of Theorem \ref{theo:main2} and Theorem \ref{theo:main3}}\label{metric flow}

\subsection{Proof of Theorem \ref{theo:main2}}
\begin{proof}[Proof of Theorem \ref{theo:main2}]
  Given any constants $\La,  K,
\dd, \ee>0$, we define
$$\cB(\La, K, \dd, \ee)=\Big\{\oo_{\varphi}\;\Big|\;|Rm|(\oo_{\varphi})
\leq \La, \; C_S(\oo_{\varphi})\leq K, \; \mu_1(\oo_{\varphi})\geq
\dd, \;Ca(\oo_{\varphi})\leq \ee\Big\}.$$ By the assumption, we
assume that the Calabi flow with the initial metric $\oo_0$ is of
type $II(\tau, \La)$ and $\oo_0\in \cB(\La, K, \dd, \ee).$

\begin{lem}\label{lem:a1}There is a $\tau_0=\tau_0(\tau, \La, K, \dd)>0$ such that for any $t\in [0, \tau_0]$  we have
$$\oo_t\in \cB(2\La, 2K,  \frac {2\dd}3,  \ee).$$
\end{lem}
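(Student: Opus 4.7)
The plan is to verify each of the four defining conditions of $\cB(2\La, 2K, \tfrac{2\dd}3, \ee)$ separately on a sufficiently short time interval, exploiting the fact that two of them are essentially free.

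First, the curvature bound is immediate: the type $II(\tau, \La)$ hypothesis gives $|Rm|(t)\leq \La<2\La$ for all $t\in[0,\tau]$, so any choice $\tau_0\leq \tau$ handles this. The Calabi-energy bound is also automatic, since
\begin{equation*}
\frac{d}{dt}Ca(t)=-\int_M |\Na\Na S|^2\,\oo_t^n\leq 0,
\end{equation*}
and therefore $Ca(\oo_t)\leq Ca(\oo_0)\leq \ee$ for all $t\geq 0$.

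For the Sobolev constant, I would use that along the Calabi flow the metric evolves by $\partial_t g_{i\bar j}=S_{i\bar j}$. Under the type $II$ hypothesis, $|S_{i\bar j}|_{g_t}\leq |\p\b\p S|(t)\leq \La$, so integrating the ODE for $g_{i\bar j}$ in $t$ yields $e^{-C\La t}\oo_0\leq \oo_t\leq e^{C\La t}\oo_0$ for a dimensional constant $C$ on the whole type $II$ interval. Standard continuity of the Sobolev constant under uniformly equivalent metric perturbations then shows that $C_S(\oo_t)\leq 2C_S(\oo_0)\leq 2K$ once $t$ is small enough depending on $\La$ and $K$.

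The eigenvalue bound is the heart of the matter, and this is where I would invoke Proposition~\ref{lem:eigen2}. Since $|Ric|(t)\leq |Rm|(t)\leq \La$ and $|\Na\b\Na S|(t)\leq |\p\b\p S|(t)\leq \La$ by the type $II$ assumption, Proposition~\ref{lem:eigen2} applied with $\ee(t)\equiv \La$ yields
\begin{equation*}
\mu_1(t)+\La^2\geq (\mu_1(0)+\La^2)\,e^{-26\La t}\geq (\dd+\La^2)\,e^{-26\La t}.
\end{equation*}
Thus $\mu_1(t)\geq \dd-\bigl(1-e^{-26\La t}\bigr)(\dd+\La^2)$, and choosing $\tau_0=\tau_0(\tau,\La,K,\dd)>0$ sufficiently small forces the right-hand side to be at least $\tfrac{2\dd}3$ on $[0,\tau_0]$. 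Combining the four bounds with the smallest such $\tau_0$ completes the proof. The only nontrivial ingredient is the application of Proposition~\ref{lem:eigen2}; everything else is continuity or monotonicity.
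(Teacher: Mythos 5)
Your proposal is correct and follows essentially the same route as the paper: the type $II(\tau,\La)$ bound gives $|\partial_t g_{i\bar j}|\leq \La$, hence uniform equivalence of $\oo_t$ with $\oo_0$ and control of the Sobolev constant for small $t$, while Proposition~\ref{lem:eigen2} with $\ee(t)\equiv\La$ gives the eigenvalue lower bound. You additionally spell out the (trivial) curvature and Calabi-energy checks and the explicit computation behind $\mu_1(t)\geq \tfrac{2\dd}{3}$, which the paper leaves implicit.
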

\begin{proof} Since $\oo_t$ is of type  $(\tau, \La)$, we have
$|Rm|(t)+|\Na\bar \Na S|\leq \La$ for any $t\in [0, \tau]$.  Thus,
by the equation of Calabi flow the evolving  metrics satisfy the
inequality
$$\Big|\pd {g_{i\bar j}}t\Big|\leq \La,\quad \forall\;t\in [0, \tau].$$
Therefore, we have
$$e^{-\La t}\oo_0\leq \oo_t\leq e^{\La t}\oo_0, \quad \forall\;t\in [0, \tau].$$
Clearly, we can choose $\tau_0\in (0, \tau)$ small such that $C_S(\oo_t)\leq
2K$ for any $t\in [0, \tau_0].$ By Proposition \ref{lem:eigen2} the first
eigenvalue $\mu_1(t)\geq \frac 23\dd$ for $t\in [0, \tau_0]$ when
$\tau_0$ is small. The lemma is proved.
\end{proof}

 To extend the solution, we define
\beq T:=\sup\Big\{t>0\;|\;\oo_t\in \cB(6\La, 6K,   \frac 14\dd,
  \ee)\Big\}. \label{eq:T2}\eeq
Suppose $T<+\infty.$ We have the following lemma.

\begin{lem}\label{lem:a2}There exists $\ee_0=\ee_0(\La, K,  \dd, n)>0$
 such that
 $$\oo_t\in \cB(3\La, 3K,   \frac 12\dd,
  \ee_0),\quad \forall\;t\in [0, T].$$

\end{lem}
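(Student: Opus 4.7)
I propose a bootstrap argument on $[0,T]$ that successively sharpens each of the four bounds defining $\cB$, starting from the crude bounds implicit in the definition of $T$ in \eqref{eq:T2}.

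Step 1 (Exponential decay of $Ca$). On $[0,T]$ we have $\mu_1(t)\geq\dd/4$ by the definition of $T$. Since $M$ carries no nonzero holomorphic vector fields, the Futaki invariant vanishes automatically, and the evolution \eqref{epo} combined with the variational characterization of $\mu_1$ gives
\[
\pd{}{t}Ca(t)\leq -2\mu_1(t)\,Ca(t)\leq -\frac{\dd}{2}\,Ca(t),
\]
so $Ca(t)\leq \ee\,e^{-\dd t/2}$ throughout $[0,T]$.

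Step 2 (Exponential decay of all $|\Na^i S|$). On $[\tau_0,T]$, where $\tau_0$ is the constant from \lemref{lem:a1}, the bounds $|Rm|\leq 6\La$ and $C_S\leq 6K$ together with Step 1 place us in the hypotheses of \lemref{lem:decay2} with $\ee(t)=\ee\,e^{-\dd t/2}$, yielding, for each integer $i\geq 1$,
\[
|\Na^i S|(t)\leq C_i\,\ee^{1/(8(n+1))}\,e^{-\dd t/(16(n+1))},\qquad t\in[\tau_0,T],
\]
with $C_i=C_i(\tau_0,\La,K,n,V)$. On $[0,\tau_0]\subset[0,\tau]$ the type $II(\tau,\La)$ hypothesis already gives $|\Na\bar\Na S|\leq\La$ directly.

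Step 3 (Upgrade of $|Rm|$, $C_S$ and $\mu_1$). Using $\partial_t g_{i\bar j}=S_{,i\bar j}$ and Step 2, the metric stays quasi-isometric to $\oo_{\tau_0}$ with multiplicative ratio $1+o(\ee^{1/(8(n+1))})$; controlling $\partial_t Rm$ through fourth-order derivatives of $S$ gives $|Rm|(t)\leq|Rm|(\tau_0)+C\ee^{1/(8(n+1))}$. Combined with \lemref{lem:a1} (which provides $|Rm|(\tau_0)\leq 2\La$, $C_S(\oo_{\tau_0})\leq 2K$, $\mu_1(\tau_0)\geq 2\dd/3$), choosing $\ee_0$ small enough yields $|Rm|(t)\leq 3\La$ and $C_S(\oo_t)\leq 3K$ for $t\in[\tau_0,T]$. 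For the eigenvalue, \propref{lem:eigen2} applied from $\tau_0$ to $t$ with upper bound $6\La$ on $|Ric|$ gives
\[
\mu_1(t)+(6\La)^2\geq(\mu_1(\tau_0)+(6\La)^2)\,e^{-26\int_{\tau_0}^t|\Na\bar\Na S|(s)\,ds},
\]
where the integral in the exponent is bounded by $C\,\ee_0^{1/(8(n+1))}$; for $\ee_0$ small this forces $\mu_1(t)\geq\dd/2$. On $[0,\tau_0]$, the sharper bounds are supplied directly by \lemref{lem:a1}, since $\cB(2\La,2K,2\dd/3,\ee)\subset\cB(3\La,3K,\dd/2,\ee_0)$ once $\ee\leq\ee_0$.

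The main obstacle is ensuring the constants in Step 3 depend only on $(\La,K,\dd,n)$ and not on $T$. This is precisely what the exponential-in-$t$ decay of Step 2 guarantees: the relevant integrals are bounded by $C\int_{\tau_0}^\infty e^{-\dd s/(16(n+1))}\,ds$, a finite quantity independent of $T$, so $\ee_0$ may be chosen a priori to close the bootstrap.
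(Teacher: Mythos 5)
Your proposal is correct and follows essentially the same route as the paper's proof: use $\mu_1\geq\dd/4$ on $[0,T]$ to get exponential decay of the Calabi energy, feed this into Lemma \ref{lem:decay2} for decay of $|\Na^i S|$ on $[\tau_0,T]$, and then close the bootstrap by upgrading $C_S$, $|Rm|$ (via $\partial_t|Rm|\leq|\Na^4S|+|Rm||\Na^2S|$) and $\mu_1$ (via Proposition \ref{lem:eigen2}), with all integrals bounded independently of $T$ by the exponential decay. The only differences are inessential numerical constants in the decay rates.
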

\begin{proof}
 Since  the eigenvalue $\mu_1(t)\geq \frac 14\dd$ for any $t\in [0, T]$,
the Calabi energy decays exponentially:
$$Ca(t)\leq \ee e^{-\frac 14\dd t},\quad \forall\,t\in[0, T].$$
By Lemma \ref{lem:decay2}, for any integer $i\geq 1$ we have
$$|\Na^iS|(t)\leq C(\tau_0, i, 6\La, 6K, n )\ee^{\frac 1{8(n+1)}}
e^{-\frac {\dd t}{ 32(n+1)} },\quad \forall\;t\in [\tau_0, T].$$
Thus, using  the equation of Calabi flow as in Lemma \ref{lem:a1} we
can choose $\ee$ small such that $C_S(t)\leq 3K$ for all $t\in [0,
T].$ On the other hand, using Proposition \ref{lem:eigen2}    we have
$$\mu_1(t)+(6\La)^2\geq \Big(\mu_1(\tau_0)+
(6\La)^2\Big)e^{-C(\tau_0,  6\La, 6K, n )\ee^{\frac 1{8(n+1)}}
\int_{\tau_0}^t\,e^{-\frac {\dd t}{ 32(n+1)} dt}}.$$ This implies
that when $\ee$ is small enough, we have the inequality
$$\mu_1(t)\geq \frac 12\dd,\quad \forall\;t\in[0, T].$$
By the evolution equation \eqref{evorm} of $Rm$  we have
$$\pd {}{t}|Rm|^2(t)=Rm*\Na^4S+Rm*Rm*\Na^2S.$$
Therefore, the following inequality holds:
$$\pd {}{t}|Rm|(t)\leq |\Na^4S|+|Rm\|\Na^2S|.$$
It follows that
$$|Rm|(t)\leq |Rm|(\tau_0)+C(\tau_0, \La, K, n,  \dd)\cdot \ee^{\frac 1{8(n+1)}}\leq 3\La,\quad \forall \,t\in [\tau_0, T],$$
where we chosen $\ee$ small  in the last inequality.
 Thus, we have
$\oo_t\in \cB(3\La, 3K,   \frac 12\dd,
  \ee_0)$ for all $t\in [\tau_0, T].$ The lemma is proved.
\end{proof}

  By Lemma \ref{lem:a2} we can extend the solution
$\oo_t$ to $[0, T+\dd']$  for some $\dd'>0$ such that
$$\oo_t\in \cB(6\La, 6K,   \frac 14\dd,
  \ee_0),\quad \forall\; t\in [0, T+\dd'],$$
which contradicts the definition (\ref{eq:T2}).
 Therefore,  all derivatives of $\varphi$ are uniformly bounded  for all time $t>0$
 by Lemma
\ref{short time} and the Calabi energy decays exponentially by Lemma
\ref{lem:calabi1}. Thus, the Calabi flow converges exponentially
fast to a constant scalar curvature metric. The theorem is proved.

\end{proof}

\subsection{Proof of Theorem \ref{theo:main3}}

\begin{proof}[Proof of Theorem \ref{theo:main3}] The proof is almost the same as
that of Theorem \ref{theo:main2}. Here we sketch the details.

 Given constants $\La,  K,
\dd>0$, we define the set
$$\cC(\La, K, \ee)=\Big\{\oo_{\varphi}\;\Big|\;|Rm|(\oo_{\varphi})
\leq \La, \; C_S(\oo_{\varphi})\leq K, \;
 \;Ca(\oo_{\varphi})\leq \ee\Big\}.$$
By the assumption, we assume that the Calabi flow with the initial
metric $\oo_0$ is of type $I(\tau, \La)$ and $\oo_0\in \cC(\La, K,
\ee)$. Following the same argument as in the proof of Lemma
\ref{lem:a1}, we have

\begin{lem}\label{lem:b1}There is a $\tau_0=\tau_0(\tau, \La, K)>0$ such that for any $t\in [0, \tau_0]$  we have $\oo_t\in
\cC(2\La, 2K,  \ee).$
\end{lem}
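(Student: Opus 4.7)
The plan is to follow the same scheme as Lemma \ref{lem:a1}, but the argument is noticeably simpler because the set $\cC(\La,K,\ee)$ has only three defining conditions (no lower bound on the first eigenvalue $\mu_1$ is imposed here, so Proposition \ref{lem:eigen2} is not needed). I need to verify that, for $\tau_0$ chosen small enough depending only on $\tau,\La,K$ and for every $t\in[0,\tau_0]$, the evolving metric $\oo_t$ satisfies $|Rm|(\oo_t)\le 2\La$, $C_S(\oo_t)\le 2K$, and $Ca(\oo_t)\le\ee$.

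Two of the three bounds come essentially for free. First, by the type $I(\tau,\La)$ hypothesis, $|Rm|(\oo_t)\le\La\le 2\La$ for every $t\in[0,\tau]$, so any choice $\tau_0\le\tau$ already delivers the curvature bound. Second, along the Calabi flow the Calabi energy is non-increasing, since $\tfrac{d}{dt}Ca(t)=-\int_M|\Na\Na S|^2\,\oo_t^n\le 0$, so $Ca(\oo_t)\le Ca(\oo_0)\le\ee$ for every $t$ where the flow is defined.

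The main step, and the only place where the argument genuinely diverges from Lemma \ref{lem:a1}, is the Sobolev bound $C_S(\oo_t)\le 2K$. In the type $II$ setting the pointwise estimate $|\p_t g_{i\bar j}|=|S_{i\bar j}|\le\La$ was available directly from the definition, which yielded metric equivalence $e^{-\La t}\oo_0\le\oo_t\le e^{\La t}\oo_0$ and hence Sobolev constants that stay comparable for short times. Under only the type $I$ assumption this pointwise bound is not immediate. My plan is to combine the Chen--He higher order derivative estimates (Theorem 3.1 in \cite{MR2957626}), which under $|Rm|(t)\le\La$ on $[0,\tau]$ give uniform bounds on all $|\Na^k Rm|(t)$ on any subinterval $[\eta,\tau]\subset(0,\tau]$, with the smooth short-time existence from Proposition \ref{short time}, which guarantees $\oo_t\to\oo_0$ in $C^\infty$ as $t\to 0^+$. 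Since $C_S$ depends continuously on the metric in the $C^2$ topology and $C_S(\oo_0)\le K$, one can then choose $\tau_0>0$, depending only on $\tau,\La,K$, so that $C_S(\oo_t)\le 2K$ throughout $[0,\tau_0]$.

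The mild technical obstacle is therefore precisely the continuity of $C_S$ at $t=0$: in Lemma \ref{lem:a1} it was automatic from the type $II$ bound on $|\p\bar\p S|$, while here it has to be recovered from the parabolic smoothing of the fourth order Calabi flow together with the smooth dependence of the short-time solution on the time variable.
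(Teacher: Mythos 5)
Your treatment of the first and third defining conditions of $\cC(2\La,2K,\ee)$ is correct and matches what the paper implicitly does: $|Rm|(\oo_t)\leq\La\leq 2\La$ on $[0,\tau]$ is the type $I$ hypothesis verbatim, and $Ca(\oo_t)\leq Ca(\oo_0)\leq\ee$ follows from the monotonicity $\frac{d}{dt}Ca(t)=-\int_M|\Na\Na S|^2\,\oo_t^n\leq 0$. You have also put your finger on the real issue: the paper disposes of Lemma \ref{lem:b1} with the single phrase ``following the same argument as in the proof of Lemma \ref{lem:a1}'', and that argument rests on the pointwise bound $|\p_t g_{i\bar j}|=|\Na\bar\Na S|\leq\La$, which comes from the type $II$ hypothesis and is \emph{not} part of the type $I$ hypothesis. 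Read literally, the paper's own proof is incomplete at exactly the point you isolate.

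However, your proposed repair does not close the gap. First, there is a circularity: the Chen--He estimate \eqref{lem:chenhe3} only controls the integrals $\int_M|\Na^kRm|^2$, and the passage from these to pointwise bounds on $|\Na\bar\Na S|$ goes through Lemma \ref{lem:tensor}, whose constant depends on the Sobolev constant of $\oo_t$ --- the very quantity you are trying to bound (Chen--He's Theorem 3.1 is likewise formulated for flows with bounded Sobolev constant). Second, even granting a bootstrap set-up, the smoothing estimates degenerate as $t\to 0$: one only gets $\int_M|\Na^kRm|^2(t)\lesssim (1+t)/t^k$, so the resulting bound on $\|\Na\bar\Na S\|_{L^\infty}(t)$ is a negative power of $t$ that is not integrable at $0$, and the metric equivalence $e^{-\int_0^t\|\p_s g\|_{L^\infty}ds}\,\oo_0\leq\oo_t$ does not follow. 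Third, the fallback through Proposition \ref{short time} and ``continuity of $C_S$ at $t=0$'' produces a $\tau_0$ depending on $|\varphi(0)|_{C^{l,\alpha}}$ and on the lower bound $\oo_0\geq\lambda\oo$, neither of which is among the data $(\tau,\La,K)$; a qualitative continuity argument cannot deliver the uniform $\tau_0=\tau_0(\tau,\La,K)$ that the lemma asserts. The honest conclusion is that the statement needs the type $II$ bound, or some substitute making $\int_0^{\tau_0}\|\Na\bar\Na S\|_{L^\infty}\,dt$ small; your write-up is more candid than the paper about where the difficulty lies, but it does not supply such a substitute.
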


  To extend the solution, we define
\beq T:=\sup\Big\{t>0\;|\;\oo_t\in \cC(6\La, 6K,
  \ee_0)\Big\}. \label{eq:T3}\eeq
Suppose $T<+\infty.$ We have the following lemma.

\begin{lem}\label{lem:b2}There exists $\ee_0=\ee_0(\La, K,    \oo)>0$
 such that for any $t\in [0, T]$ we have
 $\oo_t\in \cC(3\La, 3K,
  \ee_0).$

\end{lem}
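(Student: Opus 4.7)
The plan is to follow the structure of Lemma \ref{lem:a2} essentially verbatim, replacing the eigenvalue input (Proposition \ref{lem:eigen2}, which required $|\Na\bar\Na S|$ small) by the pre-stability based Lemma \ref{lem:calabi2}. This is exactly why only a type $I(\tau,\La)$ assumption is needed here rather than type $II$: the exponential decay of the Calabi energy can be obtained directly from a curvature and Sobolev bound together with pre-stability and vanishing Futaki invariant, without any bound on $\p\bar\p S$ at $t=0$.

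Concretely, on the interval $[0,T]$ in the definition \eqref{eq:T3} we have $|Rm|\leq 6\La$ and $C_S\leq 6K$ by construction. First I would apply Lemma \ref{lem:calabi2} to produce a constant $\mu=\mu(\La,K,\oo)>0$ with
\[
\frac{d}{dt} Ca(t) \leq -\mu\, Ca(t), \quad t\in[0,T],
\]
so $Ca(t)\leq \ee_0 e^{-\mu t}$. Plugging this into Lemma \ref{lem:decay2} on $[\tau_0,T]$, with $\tau_0$ supplied by Lemma \ref{lem:b1}, I would get, for every integer $i\geq 1$,
\[
|\Na^i S|(t)\leq C(\tau_0,i,\La,K,n)\,\ee_0^{\frac{1}{8(n+1)}}\, e^{-\frac{\mu t}{8(n+1)}},\qquad t\in[\tau_0,T].
\]

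With this decay in hand, the remaining three bounds in the definition of $\cC(3\La,3K,\ee_0)$ follow by the same mechanisms as in Lemma \ref{lem:a2}. The metric evolution $\partial_t g_{i\bar j}=S_{i\bar j}$ combined with the integrable exponential decay of $|\Na^2 S|$ shows that $\oo_t$ stays within a factor arbitrarily close to $\oo_{\tau_0}$ in the bilipschitz sense once $\ee_0$ is small, so $C_S(t)\leq 3K$. The evolution $\partial_t|Rm|\leq |\Na^4 S|+|Rm||\Na^2 S|$ integrated over $[\tau_0,t]$ yields
\[
|Rm|(t)\leq |Rm|(\tau_0)+C(\tau_0,\La,K,n)\ee_0^{\frac{1}{8(n+1)}}\int_{\tau_0}^{t} e^{-\frac{\mu s}{8(n+1)}}\,ds \leq 3\La,
\]
again after choosing $\ee_0$ sufficiently small. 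The Calabi energy bound $Ca(t)\leq \ee_0$ is automatic from monotonicity along the flow.

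The only real subtlety, and the main point where care is required, is the \emph{circularity} between the hypothesis and the conclusion: the exponential decay rate $\mu$ from Lemma \ref{lem:calabi2} depends on the enlarged bounds $(6\La,6K)$, and the improved bounds $(3\La,3K)$ are then obtained by choosing $\ee_0$ small relative to this $\mu$. This is resolved in one pass because $\mu$ is fixed once the open interval $[0,T)$ is fixed, and the smallness threshold for $\ee_0$ depends only on $\La$, $K$, $\oo$ (and $\tau_0$, itself a function of these), not on $T$. The interval $[0,\tau_0]$ is handled separately by Lemma \ref{lem:b1}, which guarantees that the short time behavior does not destroy the enlarged bounds, so the decay estimate is only needed on $[\tau_0,T]$ where \lemref{lem:decay2} applies cleanly.
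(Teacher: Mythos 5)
Your proposal is correct and follows essentially the same route as the paper: exponential decay of the Calabi energy via the pre-stability lemma (Lemma \ref{lem:calabi2}) with the $(6\La,6K)$ bounds valid on $[0,T]$, then Lemma \ref{lem:decay2} for the decay of $|\Na^i S|$, and finally the metric and curvature evolution equations to recover the improved bounds $C_S\leq 3K$ and $|Rm|\leq 3\La$ for $\ee_0$ small. Your remarks on why type $I$ suffices here and on the non-circularity of choosing $\ee_0$ depending only on $(\La,K,\oo)$ match the paper's (implicit) reasoning.
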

\begin{proof}
 Since $(M, J)$ is pre-stable, by Lemma \ref{lem:calabi2} the Calabi
energy decays exponentially:
$$Ca(t)\leq \ee e^{-\dd t},\quad \forall\,t\in[0, T],$$
where $\dd=\dd(6\La, 6K, \oo)>0.$ By Lemma \ref{lem:decay2}, for any
integer $i\geq 1$ we have \beq |\Na^iS|(t)\leq C(\tau_0, i, 6\La,
6K, n,  \oo )\ee^{\frac 1{8(n+1)}} e^{-\frac {\dd
 t}{
8(n+1)} },\quad \forall\;t\in [\tau_0, T]. \label{eq404}\eeq Thus,
using  the equation of Calabi flow as in Lemma \ref{lem:a1} we can
choose $\ee$ small such that $C_S(t)\leq 3K$ for all $t\in [0, T].$
On the other hand, using the inequality (\ref{eq404}) as in Lemma
\ref{lem:a2}, we show  that
$$|Rm|(t)\leq |Rm|(\tau_0)+C(\tau_0, \La, K, n, \oo)\cdot \ee^{\frac 1{8(n+1)}}\leq 3\La,\quad \forall \,t\in [\tau_0, T],$$
where we chosen $\ee$ small enough in the last inequality.
 Thus, we have
$\oo_t\in \cC(3\La, 3K,
  \ee_0)$ for all $t\in [\tau_0, T].$ The lemma is proved.
  \end{proof}

 By Lemma \ref{lem:b2} we can extend the solution
$\oo_t$ to $[0, T+\dd']$  for some $\dd'>0$ such that
$$\oo_t\in \cC(6\La, 6K,
  \ee_0),\quad \forall\; t\in [0, T+\dd'],$$
which contradicts the definition (\ref{eq:T3}).
 Therefore,  all derivatives of $\varphi$ are uniformly bounded  for all time $t>0$
 by Lemma
\ref{short time} and the Calabi energy decays exponentially by Lemma
\ref{lem:calabi1}. Thus, the Calabi flow converges exponentially
fast to a constant scalar curvature metric. The theorem is proved.

\end{proof}

We have the following analogous result for the modified Calabi flow.
\begin{theo}\label{theo:main3ex}Let $(M, \oo)$ be an $n$-dimensional compact K\"ahler
manifold.  Assume that $M$ is pre-stable.
 For any $\tau,
\La, K>0$, there is a constant $\ee=\ee(\tau, \La, K, n, \oo)>0$
such that if the solution $\oo_t$ of the Calabi flow with any
$K$-invariant
initial metric
 $\oo_0\in [\oo]$ satisfies the following properties:
 \begin{enumerate}
   \item[(a)] the modified Calabi flow $\oo_t$ is of type $(\tau, \La)$;
   \item[(b)] $
 C_S(\oo_0)\leq K, \quad \td Ca(\oo_0)\leq \ee,
 $
 \end{enumerate}
 the Calabi flow $\oo_t$  exists for all the time and converges exponentially fast
to an extremal K\"ahler metric.\\

\end{theo}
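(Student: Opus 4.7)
\medskip

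\noindent\textbf{Proof proposal for Theorem \ref{theo:main3ex}.}
The plan is to imitate the structure of the proof of Theorem \ref{theo:main3} line by line, replacing every object by its ``modified'' counterpart and using Lemma \ref{lem:calabi2ex} in place of Lemma \ref{lem:calabi2}. Note that throughout we work on the space $\cH_K(M,\oo)$ of $K$-invariant potentials and with the modified scalar curvature $S-\un S-\theta_{\Re V}$; since the extremal vector field $\Re V$ is intrinsically attached to the class $[\oo]$, the modified Futaki invariant vanishes automatically in our setting, so the hypothesis of Lemma \ref{lem:calabi2ex} is met once pre-stability is assumed.

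First I would introduce the family
\[
\td\cC(\La,K,\ee)=\Big\{\oo_\varphi\;\Big|\;|Rm|(\oo_\varphi)\leq \La,\;C_S(\oo_\varphi)\leq K,\;\td Ca(\oo_\varphi)\leq \ee\Big\}.
\]
By the type $(\tau,\La)$ hypothesis, $|Rm|(t)+|\p\bar\p S|\leq \La$ on $[0,\tau]$, so the modified flow equation $\pd{}t g_{i\bar j}=S_{,i\bar j}-(\theta_{\Re V})_{,i\bar j}$ together with the a priori bound on $\Re V$ (cf.\ \cite{MR1314584}\cite{MR1817785}) shows that $\oo_t$ stays comparable to $\oo_0$ on a small sub-interval; this gives a $\tau_0=\tau_0(\tau,\La,K)$ with $\oo_t\in\td\cC(2\La,2K,\ee)$ for $t\in[0,\tau_0]$, the exact analog of Lemma \ref{lem:b1}.

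Next define the maximal time
\[
T:=\sup\Big\{t>0\;\Big|\;\oo_t\in\td\cC(6\La,6K,\ee_0)\Big\},
\]
and argue by contradiction that $T=+\infty$ for $\ee_0$ small. On $[0,T]$ pre-stability together with Lemma \ref{lem:calabi2ex} yields the exponential decay $\td Ca(t)\leq \ee e^{-\dd t}$ with $\dd=\dd(6\La,6K,\oo)>0$. I then need the higher-order analog of Lemma \ref{lem:decay2} applied to $f=\dot\psi=S-\un S-\theta_{\Re V}(\psi)$: since $\theta_{\Re V}$ is uniformly controlled by the metric and $\Re V$ is a fixed holomorphic vector field, the Hamilton-type interpolation combined with the Sobolev inequality of Lemma \ref{lem:tensor} gives
\[
|\Na^i(S-\theta_{\Re V})|(t)\leq C\,\ee^{1/(8(n+1))}e^{-\dd t/(32(n+1))}
\]
for $t\in[\tau_0,T]$, where the energy-type bootstrap of Chen--He \cite{MR2957626} used for $F_k(t)$ goes through verbatim because the evolution equation of the curvature along the modified Calabi flow differs from \eqref{evorm} only by the zero-order Lie-derivative term generated by $\Re V$. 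Feeding this back into the modified flow equation and into Proposition \ref{lem:eigen2}'s analog gives, for $\ee_0$ sufficiently small, both $C_S(t)\leq 3K$ and $|Rm|(t)\leq 3\La$ on $[0,T]$, hence $\oo_t\in\td\cC(3\La,3K,\ee_0)$, which contradicts the definition of $T$. This proves $T=+\infty$.

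Once we have $\oo_t\in\td\cC(6\La,6K,\ee_0)$ for all $t\geq 0$, the exponential decay of $\td Ca(t)$ together with Proposition \ref{short time} (the short-time existence extends to a uniform $C^{k,\al}$ bound on $\psi(t)$ for all $k$) gives smooth subsequential limits; the limit is necessarily extremal, and combining this with the exponential decay yields exponential convergence of the whole flow to an extK metric. The main obstacle I foresee is the honest verification that Lemma \ref{lem:decay2} and the interpolation machinery of Lemma \ref{lem:Ha82}--Lemma \ref{lem:tensor} go through for the modified flow with the same constants; this is essentially bookkeeping of the zero-order $\theta_{\Re V}$ terms, but it must be done carefully because these terms enter every covariant derivative of the evolution equation and could in principle spoil the smallness used to close the bootstrap on $|Rm|$.
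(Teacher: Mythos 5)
Your proposal is correct and is exactly the argument the paper intends: Theorem~\ref{theo:main3ex} is stated without proof as the direct analogue of Theorem~\ref{theo:main3}, and your line-by-line adaptation --- replacing $Ca$ by $\td Ca$, invoking Lemma~\ref{lem:calabi2ex} in place of Lemma~\ref{lem:calabi2}, and tracking the zero-order $\theta_{\Re V}$ contributions through Lemmas~\ref{lem:decay2} and~\ref{lem:tensor} --- is precisely the omitted proof. Your closing caveat about verifying the interpolation machinery for the modified flow is a fair point, but it is bookkeeping the paper itself elides, so your write-up is if anything more careful than the source.
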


\subsection{Discussion of $(\tau, \La)$ Calabi flow}
In this subsection, we give some conditions on the initial metric
such that the Calabi flow is of type $I(\tau, \La)$ or $II(\tau,
\La).$ We introduce some definitions.

\begin{defi}\label{defi:extend}For any $\La_1, \La_2, K>0$, we define  $\cF(\La_1, \La_2, K)$ the set of K\"ahler metrics $\oo_0\in
\Om$ satisfying the following conditions
\begin{enumerate}
  \item[(1)] $\sum_{i=0}^2|\Na^iRm|(\oo_0)\leq \La_1;$
  \item[(2)] $\sum_{i=1}^N\int_M\;|\Na^iRm|^2(\oo_0)\leq \La_2$ where $N=28(n+1);$
  \item[(3)] $C_S(\oo)\leq K.$
\end{enumerate}
\end{defi}

\begin{lem}\label{lem:initial}Given $\La_1, \La_2, K>0$. For any $\oo_0\in \cF(\La_1, \La_2,
K)$
there exists $\tau=\tau(\La_1, \La_2, K, \oo)$ and $\La(\La_1, \La_2, K, \oo)$
 such that the
solution $\oo_t$ of Calabi flow with the initial metric $\oo_0$ is of type $(\tau, \La).$ In other
words, $\oo_t$ satisfies
$$|Rm|(t)+|\Na \bar \Na S|(t)\leq \La,\quad t\in [0, \tau].$$
Moreover, we can choose $\tau$ small such that
$$|Rm|(t)\leq 2|Rm|(0),\quad C_S(t)\leq 2C_S(0),\quad t\in [0, \tau]. $$
\end{lem}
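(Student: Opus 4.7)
The plan is a continuity argument combining short-time smoothness with Chen--He-type integral estimates for the Calabi flow. By Proposition~\ref{short time} the Calabi flow is smooth on some maximal interval. Define the threshold time
$$T^{\ast} := \sup\Big\{\tau'>0 \,:\, |Rm|(t)\leq 2\La_1,\; C_S(\oo_t)\leq 2K,\; \sum_{i=1}^{N}\int_M|\Na^i Rm|^2\,\oo_t^n\leq 2\La_2\ \text{for all}\ t\in[0,\tau']\Big\}.$$
The goal is to produce a lower bound $T^{\ast}\geq \tau(\La_1,\La_2,K,\oo)>0$; on $[0,T^{\ast}]$ the pointwise bound on $|Rm|$ together with the pointwise bound on $|\Na^2 Rm|$ derived below yields the type $(\tau,\La)$ property, since $\Na\bar\Na S$ is a metric contraction of $\Na^2 Rm$.

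On $[0,T^{\ast})$, because $|Rm|\leq 2\La_1$, the integration-by-parts for the fourth-order operator appearing in $\partial_t Rm$ gives, in the form of (\ref{lem:chenhe3}),
$$\pd{}t\int_M|\Na^k Rm|^2\,\oo_t^n \leq -\frac 12\int_M|\Na^{k+2}Rm|^2\,\oo_t^n + C(k,\La_1,n)\int_M|Rm|^2\,\oo_t^n,$$
for $0\leq k\leq N-2$. Summing over $k$ and integrating in $t$, the initial bound yields
$$\sum_{i=0}^{N}\int_M|\Na^i Rm|^2\,\oo_t^n \leq \La_2 + C(N,\La_1,n)\cdot t,\qquad t\in[0,T^{\ast}).$$
With the Sobolev constant controlled by $2K$, applying Lemma~\ref{lem:tensor} to $T=Rm$ for $i=1,2$ converts these integral bounds into pointwise ones:
$$|Rm|(t) + |\Na Rm|(t) + |\Na^2 Rm|(t) \leq C_1(\La_1,\La_2,K,n,\oo).$$
The choice $N=28(n+1)$ is tailored precisely so that the integrals $\int|\Na^{4(n+1)i}Rm|^2$ and $\int|\Na^{4(n+1)(i+1)}Rm|^2$ appearing in Lemma~\ref{lem:tensor} (together with the extra two derivatives lost in the differential inequality above) are all covered by the assumption $\oo_0\in\cF(\La_1,\La_2,K)$. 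Setting $\La:=C_1+\text{const}$, we obtain $|Rm|(t)+|\Na\bar\Na S|(t)\leq \La$ on $[0,T^{\ast})$.

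It remains to show that the three threshold inequalities are not saturated on a short interval. From $\partial_t g_{i\bar j}=S_{i\bar j}$ we have $|\partial_t g|\leq|\Na\bar\Na S|\leq\La$, hence $e^{-\La t}\oo_0\leq\oo_t\leq e^{\La t}\oo_0$ and $C_S(\oo_t)\leq 2C_S(\oo_0)\leq 2K$ for $t$ small. The evolution equation (\ref{evorm}) combined with the pointwise bounds just obtained gives $|\partial_t|Rm|^2|\leq C(\La_1,\La_2,K,n,\oo)$, hence $|Rm|(t)\leq 2|Rm|(0)\leq 2\La_1$ for $t$ small; similarly Step~2 forces $\sum_{i=1}^{N}\int|\Na^i Rm|^2\,\oo_t^n\leq \La_2+Ct<2\La_2$ for $t$ small. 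A standard continuity argument then concludes $T^{\ast}\geq \tau(\La_1,\La_2,K,\oo)$, which establishes both the type $(\tau,\La)$ property and the ``moreover'' doubling estimates.

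The main obstacle is the absence of any maximum principle for the fourth-order Calabi flow: one cannot bound $|Rm|$ pointwise directly along the evolution. The integral control up to order $N=28(n+1)$ in the definition of $\cF(\La_1,\La_2,K)$ is what allows Lemma~\ref{lem:tensor} to convert integral bounds into the pointwise bound on $|\Na^2 Rm|$ needed to estimate $|\Na\bar\Na S|$, and the apparent circularity between the pointwise bound on $|Rm|$ used in the Chen--He inequality and the pointwise bound produced by Lemma~\ref{lem:tensor} is resolved by the continuity threshold $T^{\ast}$.
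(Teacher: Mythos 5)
Your strategy is the same as the paper's: a continuity/threshold argument in which the Chen--He integral inequality (\ref{lem:chenhe3}) propagates the $W^{N,2}$ bounds on $Rm$, Lemma~\ref{lem:tensor} converts them into pointwise bounds, and the evolution equations for the metric and for $|Rm|$ close the loop on a short time interval. The only structural difference is your choice of threshold set: you track only $|Rm|$ pointwise (plus $C_S$ and the integral norms), whereas the paper's set $\cF(2\La_1,2\La_2,2K)$ also carries $|\Na Rm|$ and $|\Na^2 Rm|$ pointwise, which lets it read off $|\Na\bar\Na S|\leq c(n)\cdot 2\La_1$ directly from the definition rather than via Lemma~\ref{lem:tensor}. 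Your leaner threshold is legitimate, but it is not actually cheaper, for the reason below.

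There is one concrete gap. To rule out saturation of the bound $|Rm|\leq 2\La_1$ you invoke the evolution inequality coming from (\ref{evorm}), namely $\partial_t|Rm|\leq |\Na^4 S|+|Rm|\,|\Na^2 S|$, and you claim the right-hand side is controlled by ``the pointwise bounds just obtained.'' But the bounds you obtained from Lemma~\ref{lem:tensor} are only for $i=1,2$ applied to $T=Rm$, i.e.\ they control $|\Na Rm|$ and $|\Na^2 Rm|$; they say nothing about $|\Na^4 S|$, which is a contraction of \emph{four} covariant derivatives of $Rm$. Since the Calabi flow is fourth order, $\partial_t Rm$ genuinely involves $\Na^4 S$, and a second-order pointwise bound cannot close this step. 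The repair is immediate within your own framework: apply Lemma~\ref{lem:tensor} also for $i=3,4$ (with $T=Rm$, or for $i\leq 4$ with $T=S-\un S$), which requires the integrals $\int_M|\Na^{4(n+1)i}Rm|^2$ up to order $20(n+1)\leq N=28(n+1)$ and is therefore still covered by the hypothesis $\oo_0\in\cF(\La_1,\La_2,K)$ and the threshold bound $\sum_{i=1}^N\int_M|\Na^iRm|^2\leq 2\La_2$. (The paper, because its threshold also propagates $|\Na Rm|$ and $|\Na^2 Rm|$ pointwise, needs the evolution of $|\Na^iRm|$ for $i\leq 2$ and hence pointwise control of $|\Na^jS|$ for $j\leq 6$; this is exactly why $N$ is taken to be $28(n+1)$.) With that correction your argument goes through.
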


\begin{proof} Let
$$T=\sup\Big\{t>0\;\Big|\; \oo_s\in \cF(2\La_1, 2\La_2, 2K)\quad  \forall\; s\in [0, t]\Big\}.$$
We would like to give a lower bound of $T.$
\begin{enumerate}
  \item[(1)] To estimate the
metric $\oo_t$, we observe that
$$\Big|\pd {}t\oo_t\Big|=|\Na\bar\Na S|\leq 2\La_1,\quad  \forall\; t\in [0, T]$$
which implies that
$$e^{-2\La_1 t}\oo_0\leq \oo_t\leq e^{2\La_1 t}\oo_0, \quad  \forall\; t\in [0, T].$$
Thus, if $t\leq C(\La_1, K)$, we have $C_S(t)\leq 2K.$

\item[(2)]Since the curvature tensor
is  bounded for $t\in [0, T],$ by  (\ref{lem:chenhe3})  we have
\beqs &&\sum_{k=0}^N\int_M\;|\Na^kRm|^2(t)\,\oo_t^n\\&\leq&
\sum_{k=0}^N\int_M\;|\Na^kRm|^2(0)\,\oo_0^n+(N+1)C(n, k,
2\La_1)V(2\La_1)^2t \\&\leq& \La_2+(N+1)C(n, k,
2\La_1)V(2\La_1)^2t,\quad \forall\; t\in [0, T].\eeqs Thus,
$\sum_{i=1}^N\,\int_M\;|\Na^kRm|^2(t)\,\oo_t^n\leq 2\La_2$ if
$$t\leq \frac {\La_2}{ (N+1)C(n, k,
2\La_1)V(2\La_1)^2}.$$
\item[(3)]By Lemma \ref{lem:tensor} there is a constant $ C_1(c, i, \oo, \La_1, \La_2)$ such
that \beq \max_M|\Na^i S|(t)\leq C_1(c, i, \oo, \La_1, \La_2), \quad
1\leq i\leq 6,\quad t\in [0, T]. \label{eq405}\eeq By the evolution
equation \eqref{evorm} of $Rm$ ,  we have the inequality
$$\pd {}{t}|Rm|(t)\leq |\Na^4S|+|Rm\|\Na^2S|.$$
It follows that
$$|Rm|(t)\leq |Rm|(0)+C_2(c,  \oo, \La_1, \La_2)t,\quad \forall \,t\in [0, T].$$
We have similar estimates for higher order derivatives of $Rm$:
$$\pd {}t|\Na^i Rm|\leq |\Na^2R\|\Na^i Rm|+|\Na^{i+4}S|,\quad i=1, 2.$$
Therefore, we have
$$\sum_{i=0}^2|\Na^i Rm|(t)\leq \sum_{i=0}^2|\Na^i Rm|(0)+C_3(c,  \oo, \La_1,
\La_2)t\leq 2\La_1, \quad t\in \Big[0, \frac {\La_1}{C_3}\Big]. $$\\

\end{enumerate}
  Combining the above estimates, we have
$$T\geq \min\Big\{C(\La_1, K),\;\frac {\La_2}{ (N+1)C(n, k,
2\La_1)V(2\La_1)^2}, \;\frac {\La_1}{C_3}\Big\}.$$
The lemma is proved.\\
\end{proof}

Therefore, we can replace the condition $(a)$ in Theorem
\ref{theo:main2} and Theorem \ref{theo:main3} by assuming the
initial metric in $\cF(\La_1, \La_2, K).$ It is interesting to find
a simpler condition to replace the type $(\tau, \La)$ condition.

%%%%%%%%%%%%%%%%%%%%%%%%%%%%%%%%%%%%%%%%%%%%%%%%%%%%%%%%%%%%%%%%%%%%%%%%%%%%%%%%%%%%%%%
%%\bibliography{bib}

%%%%%%%%%%%%%%%%%%%%%%%%%%%%%%%%%%%%%%%%%%%%%%%%%%%%%%%%%%%%%%%%%%%%%%%%%%%%%%%%%%%%%%%

\end{document}